\newcommand{\pf}{\noindent {\it Proof.} }
\newtheorem{theorem}{Theorem}[section]
\newtheorem{lemma}[theorem]{Lemma}
\newtheorem{coro}[theorem]{Corollary}
\theoremstyle{remark}
\def\qed{\hfill \rule{4pt}{7pt}}
\renewcommand{\d}{\mathrm{d}}
\newcommand{\N}{{\mathbb{N}}}
\newcommand{\E}{{\mathbb{E}}}
\newcommand{\Z}{{\mathbb{Z}}}
\renewcommand{\P}{{\mathbb{P}}}
\renewcommand{\S}{{\mathcal{S}}}
\newcommand{\F}{{\mathcal{F}}}
\newcommand{\T}{{\mathcal{T}}}
\newcommand{\I}{{\mathcal{I}}}
\newcommand{\M}{{\mathcal{M}}}
\newcommand{\bd}{{\mathrm{DEG}}}
\begin{document}

\title{Nearly perfect matchings in uniform hypergraphs}

\author{Hongliang Lu\footnote{Partially supported by the National Natural
Science Foundation of China under grant No.11471257}\\
School of Mathematics and Statistics\\
Xi'an Jiaotong University\\
Xi'an, Shaanxi 710049, China\\
\medskip \\
 Xingxing Yu\footnote{Partially supported by NSF grant DMS-1600738} ~and Xiaofan Yuan\footnote{Partially supported by NSF grant DMS-1600738
 (through X. Yu) and Research Assistantship from ACO at Georgia Tech}\\
School of Mathematics\\
Georgia Institute of Technology\\
Atlanta, GA 30332}

\date{}

\maketitle

\date{}

\maketitle

\begin{abstract}
We prove that,
for any integers $k,l$ with $k\ge 3$ and $k/2<l\le  k-1$, there exists a positive real $\mu$ such that, for all integers $m,n$ satisfying
\[
	\frac{n}{k}-\mu n\le m\le \frac{n}{k} -1- \left(1-\frac{l}{k}\right)\left\lceil\frac{k-l}{2l-k}\right\rceil
\]
and $n$ sufficiently large,
if $H$ is  a $k$-uniform hypergraph on $n$ vertices and  $\delta_{l}(H)>{n-l\choose k-l}-{(n-l)-m\choose k-l}$,
then $H$ has a matching of size $m+1$.
% or $l=k-1$, $n\in k\mathbb{Z}$, $m=n/k-1$, and $H$ has a matching of size $m$.
This improves upon an earlier result of  H\`{a}n, Person, and Schacht
for the range $k/2<l\le k-1$.  In many cases, our result gives tight bound
on $\delta_l(H)$ for near perfect matchings (e.g., when $l\ge 2k/3$, $n\equiv r \pmod k$, $0\le r<k$, and $r+l\ge k$ we can take $m=\lceil n/k\rceil -2$).
When $k=3$, using an absorbing lemma of H\`{a}n, Person, and Schacht,
our proof also implies a result of K\"uhn, Osthus, and Treglown (and,
independently, of Khan) on perfect matchings in 3-uniform hypergraphs.
\end{abstract}

\newpage

\section{Introduction}

A \emph{hypergraph} $H$ consists of a vertex set $V(H)$ and an edge
set $E(H)$ whose members are subsets of $V(H)$. For a  positive integer
$k$, a hypergraph $H$ is
\emph{$k$-uniform} if $E(H)\subseteq {V(H)\choose k}$, and a
$k$-uniform hypergraph is also called a \emph{$k$-graph}.

Let $H$ be a hypergraph. A {\it matching} in $H$ is a set of pairwise
disjoint edges of $H$. (If $M$ is a matching in $H$, we use
$V(M)$ to denote the vertices of $H$ covered by $M$.)
The size of a largest matching in $H$ is denoted by $\nu(H)$, known as the {\it matching number} of $H$. A
matching in $H$  is {\it perfect} if it covers all vertices of $H$.
A matching is {\it nearly perfect} in
$H$ if it covers all but a constant number of vertices.
Moreover, a matching in a $k$-graph is {\it near perfect}
if it covers all but at most $k$ vertices.

We are interested in degree conditions for the
existence of a nearly perfect matching  in a hypergraph.
Let $H$ be a hypergraph.  For any $T\subseteq V(H)$, we use $d_H(T)$ to denote the {\it degree}
of $T$ in $H$, i.e., the number of edges of $H$ containing $T$.
Let $l$ be a non-negative integer.
Then $\delta_l(H):= \min\left\{d_H(T): T\in {V(H)\choose l}\right\}$ is the minimum
{\it $l$-degree} of $H$. $\delta_1(H)$ is often called the minimum {\it
vertex degree} of $H$ and $\delta_{k-1}(H)$ is known as the minimum
{\it codegree} of $H$. Note that $\delta_0(H)$ is the number of edges in $H$.

Bollob\'{a}s, Daykin, and Erd\H{o}s \cite{BDE76} considered minimum
vertex degree conditions for matchings in $k$-graphs.
They proved that if $H$ is a $k$-graph of order $n\geq 2k^2(m+2)$ and
 $\delta_1(H)>{{n-1}\choose {k-1}}-{{n-m}\choose {k-1}}$ then
$\nu(H)\ge m$.
For 3-graphs, K\"{u}hn, Osthus, and Treglown \cite{KOT13} and,
independently, Khan \cite{Kh13} proved the following stronger result:
There exists  $n_0\in \mathbb{N}$ such that if $H$ is a 3-graph of order $n\geq n_0$, $m\le n/3$,
and $\delta_1(H)>{{n-1}\choose 2}-{{n-m}\choose 2},$
then $\nu(H)\ge m$.

 R\"{o}dl, Ruci\'{n}ski,  and Szemer\'{e}di \cite{RRS09}
determined the minimum codegree threshold for the
existence of a perfect matching in a $k$-graph.
Treglown and Zhao \cite{TZ12,TZ13} extended this result to include $l$-degrees with  $k/2\leq l\leq k-2$.
H\`{a}n, Person, and Schacht \cite{HPS09} considered the minimum
$l$-degree condition for perfect matchings when $1\leq l\leq k/2$. In particular, they
showed that if $H$ is a 3-graph and $\delta_1(H)>(1+o(1))\frac 5 9 {|V(H)|\choose 2}$ then $H$ has perfect matching.

For near perfect matchings, Han \cite{Han15} proved a conjecture of R\"{o}dl, Ruci\'{n}ski,  and
Szemer\'{e}di  \cite{RRS09}  that, for $n \not\equiv 0\pmod
k$, the co-degree threshold for the existence of a near perfect matching in a $k$-graph $H$
is $\lfloor n/k\rfloor$. This is much smaller than the co-degree threshold (roughly $n/2$)
obtained by R\"{o}dl, Ruci\'{n}ski,  and Szemer\'{e}di  \cite{RRS09} for perfect matchings.

For nearly perfect matchings,
H\`an, Person, and Schacht \cite{HPS09} proved the following result:
For any integers $k>l>0$ there exists $n_0\in \mathbb{N}$ such that
for all $n>n_0$ with $n\in k\Z$ and for every $n$-vertex $k$-graph $H$  with
\[
	\delta_l(H)\ge \frac{k-l}{k}\binom{n}{k-l} + k^{k+1}(\ln n)^{1/2}n^{k-l-1/2},
\]
$H$ contains a matching covering all but $(l-1)k$ vertices.
Our main result improves this bound for the range $k/2<l\le k-1$,
by providing an exact $l$-degree threshold for the existence of a matching covering all but at most $(k-l)\lceil(k-l)/(2l-k)\rceil+k-1$ vertices.

\newpage

\begin{theorem}\label{main-thm}
	For any integers $k,d$ satisfying $k\ge 3$ and $k/2<d<k-1$ (or $d=k-2$), there exists a positive real $\mu$ such that, for all integers $m,n$ satisfying
	\begin{align*}\label{thm1}
			\frac{n}{k}-\mu n\le m\le \frac{n}{k} -1- \left(1-\frac{d}{k}\right)\left\lceil\frac{k-d}{2d-k}\right\rceil
		\end{align*}
	and $n$ sufficiently large,
	if $H$ is  a $k$-graph on $n$ vertices and  $\delta_{d}(H)>{n-d\choose k-d}-{(n-d)-m\choose k-d}$
	then $\nu(H)\ge m+1$.
\end{theorem}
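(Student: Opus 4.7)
The plan is to argue by contradiction via an augmenting-structure argument. Assume $\nu(H)\le m$ and let $M$ be a maximum matching of $H$ with $|M|=m$ and $U:=V(H)\setminus V(M)$. The upper bound on $m$ gives $|U|=n-km\ge k+(k-d)\lceil (k-d)/(2d-k)\rceil$, the quantitative reserve that drives the argument.

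I would seek an \emph{augmenting structure}: pairwise disjoint edges $e_0,\ldots,e_p\in E(H)$ together with matching edges $f_1,\ldots,f_p\in M$ such that $\bigl(\bigcup_i e_i\bigr)\cap V(M)\subseteq\bigcup_j f_j$; replacing the $f_j$ by the $e_i$ in $M$ then produces a matching of size $m+1$, the desired contradiction. A direct count shows that, if each $e_i$ is anchored at a $d$-subset of $U$ so that $|e_i\cap U|\ge d$ and $|e_i\cap V(M)|\le k-d$, then the packing of the $e_i$-tails inside the $f_j$ forces $p\ge\lceil (k-d)/(2d-k)\rceil$: each additional $f_j$ supplies $2d-k$ units of ``slack'' in the tail-packing constraint, and one needs cumulative slack $k-d$. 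The reserve $|U|\ge k+(k-d)\lceil (k-d)/(2d-k)\rceil$ is precisely what the arithmetic needs so that $(p+1)$ disjoint anchor $d$-sets can actually be chosen inside $U$.

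The execution proceeds in two phases. Phase one extracts structural consequences of the degree hypothesis: the condition $\delta_d(H)>\binom{n-d}{k-d}-\binom{n-d-m}{k-d}$ together with the maximality of $M$ (which forbids any edge lying entirely in $U$) forces, for every $d$-subset $T\subseteq U$, an abundance of ``$f$-concentrated'' edges through $T$---edges whose tail $e\setminus T$ lies inside a single matching edge $f$. This step uses $d>k/2$ (so $k-d<d$) to ensure the $(k-d)$-tails fit comfortably inside $k$-vertex matching edges, and proceeds by inclusion--exclusion on the $(k-d)$-subsets of $V(H)\setminus T$, separating those lying in $U$ (forbidden), those in a single matching edge (desired), and those spread across several matching edges or straddling $U$ and $V(M)$ (to be bounded). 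Phase two then builds the augmenting structure greedily: pick disjoint anchoring $d$-sets $T_0,\ldots,T_p\subseteq U$ and at each step use the concentrated edges guaranteed by phase one to select a compatible $e_t$, either reusing a previously chosen $f_j$ or, if necessary, introducing a new one; the selection closes after exactly $\lceil (k-d)/(2d-k)\rceil$ matching edges are consumed.

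The main obstacle is phase one: proving that the degree hypothesis yields enough single-$f$-concentrated edges through each anchoring $d$-set, as opposed to edges whose tails are spread across many matching edges. This is a Bollob\'as--Daykin--Erd\H{o}s-style counting argument whose delicate balance dictates the exact bound on $m$. A secondary challenge is ensuring the greedy selection in phase two never gets stuck: this is a Hall-type condition adapted to the hypergraph setting, and the numerical threshold $\lceil(k-d)/(2d-k)\rceil$ is tight precisely because the cumulative packing slack must equal the required tail size $k-d$. The boundary case $d=k-2$ (where $2d-k$ can be small relative to $k-d$) must be checked separately to confirm that the iterative construction still terminates correctly.
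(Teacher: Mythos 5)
Your proposal rests on an augmenting-structure argument that, as stated, does not follow from the degree hypothesis. The critical gap is in Phase One: the claim that $\delta_d(H)>\binom{n-d}{k-d}-\binom{(n-d)-m}{k-d}$ plus maximality of $M$ forces, for every $d$-set $T\subseteq U$, an abundance of edges whose $(k-d)$-tail lies \emph{inside a single matching edge}. Count the candidate tails: there are only $m\binom{k}{k-d}=\Theta(n)$ subsets of size $k-d$ that sit entirely inside some $f\in M$, whereas the slack in the degree bound is $\binom{(n-d)-m}{k-d}=\Theta(n^{k-d})$. For every $d$ in the range you consider, $k-d\ge 2$, so this slack is $\Omega(n^2)$, vastly larger than the total supply of ``concentrated'' tails. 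Consequently, a hypergraph can satisfy the degree bound while having \emph{no} $f$-concentrated edges through a given $T$: the edges through $T$ can have tails that straddle two or more matching edges (or spread between $U$ and $V(M)$) in a way that evades your packing argument entirely. Your inclusion--exclusion cannot recover them, and Phase Two has nothing to be greedy about.

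This is not a fixable technicality --- it is the reason the paper's proof is so much heavier. The actual argument splits into two regimes. When $H$ is $\varepsilon$-close to the extremal example $H_k^{k-l}(U,W)$, a structural/greedy construction (Lemmas~2.1--2.3) does resemble the spirit of what you are attempting, because closeness supplies the concentration your counting cannot. When $H$ is \emph{not} close to any such $H_k^{k-l}(U,W)$, the degree bound provides no local augmenting structure; the paper instead uses the absorbing method (Lemma~\ref{Absorb-lem}), a stability version of Frankl's theorem to produce perfect fractional matchings on random induced subgraphs (Lemmas~\ref{stafrankl}, \ref{4.4}), hypergraph containers to control independence numbers of those subgraphs (Lemmas~\ref{thm2.2}, \ref{indep}, \ref{4.1}), and finally the Frankl--R\"odl nibble (Lemma~\ref{Rodl}) --- none of which is replaceable by a direct alternating-path argument at this degree threshold. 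If a one-shot augmenting argument of your type worked, it would render all that machinery unnecessary; the fact that it doesn't is precisely the obstruction your Phase~One fails to clear.
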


When $l\ge 2k/3$, we have $(k-l)/(2l-k)\le 1$. Moreover, if $n\equiv r \pmod k$ with $0\le r<k$ and $r+l\ge k$ then we can apply Theorem~\ref{main-thm} with $m=\lceil n/k\rceil -2$ and conclude that
$H$ has a matching covering all but at most $k$ vertices.
In general, if the interval $[n/k-2, n/k -1-
\left(1-l/k\right)\left\lceil(k-l)/(2l-k)\right\rceil]$ contains an
integer, then by letting $m$ to be that integer, Theorem~\ref{main-thm} implies that $H$ has a near perfect matching.

The bound on $\delta_l(H)$ in Theorem~\ref{main-thm} is best possible.
To see this, consider the  $k$-graph $H^k_k(U,W)$, where $U,W$ is a partition of $V(H^k_k(U,W))$ and
the edges of $H^k_k(U,W)$ are precisely those $k$-subsets of $V(H^k_k(U,W))$ intersecting $W$ at least once.
For integers $k,l,n$ with $k\ge 2$ and $0<l<k$ and for large $n$,
$\delta_{l}(H_k^{k}(U,W))={n-l\choose k-l}-{(n-l)-|W|\choose
  k-l}$ and the matching number of $H_k^{k}(U,W)$ is $|W|$.
Thus, the bound on $\delta_l(H)$ in Theorem~\ref{main-thm} is best possible (by letting $|W|=m$).

We need to extend the definition of
$H^k_k(U,W)$ to  $H^s_k(U,W)$ for all $s\in [k]$. Again,  $U,W$ is a
partition of $V(H^s_k(U,W))$ and  the edges of $H^s_k(U,W)$ are precisely those $k$-subsets of $V(H^s_k(U,W))$
intersecting $W$ at least once and at most $s$ times.
To prove Theorem~\ref{main-thm}, we consider two cases based on
whether or not  $H$ is ``close'' to $H_k^{k-l}(U,W)$ for some partition $U,W$ of $V(H)$.

Given two hypergraphs $H_1, H_2$ and a real number $\varepsilon > 0$,
we say that $H_2$ is \textit{$\varepsilon$-close} to $H_1$ if $V(H_1) = V(H_2)$ and $|E(H_1)\backslash E(H_2)|\leq \varepsilon |V(H_1)|^{k}$.

We first consider the case when  $V(H)$ has a partition $U,W$ with $|W|=m$ such that $H$ is
close to $H_k^{k-l}(U,W)$. If every vertex of  $H$ is ``good'' (to be made precise later) with respect to
$H_k^{k-l}(U,W)$ then we find the desired matching by a greedy
argument.
Otherwise, we find the desired matching in two steps by first
finding a matching $M'$ such that every vertex in $H-V(M')$
is good, thereby reducing the problem to the previous case.

For the case when $H$ is not close to $H_k^{k-l}(U,W)$ for any
partition $U,W$ of $V(H)$ with $W|=m$, we will see that $H$ does not have sparse set of very large size and we will use the following approach of
Alon, Frankl, Huang, R\"{o}dl, Ruci\'{n}ski, and Sudakov
\cite{AFHRRS12}:
\begin{itemize}
\item Find a small absorbing matching $M_a$ in $H$,
\item find random subgraphs of $H-V(M_a)$ with perfect fractional matchings (see Section 4 for definition),
% in $H-V(M_a)$ by a randomization technique,
\item use those random subgraphs and a theorem of Frankl and R\"{o}dl
  to find  an almost perfect matching $M'$ in $H-V(M_a)$ (see Lemma~\ref{almostperfect}), and
\item use the matching $M_a$ to absorb the remaining vertices in $V(H)\setminus V(M_a)\setminus V(M')$.
\end{itemize}
To find a perfect fractional matching in certain random subgraphs of $H-V(M_a)$ we need to prove a stability version of
a result of Frankl~\cite{Fr13} on the Erd\H{o}s matching
conjecture~\cite{Er65}, which might be of independent interest.
We also need to use the hypergraph container  result of Balogh, Morris, and Samotij
\cite{BMS15} to bound the independence number of a random subgraph of
$H$.
%  when $H$ is  not close to $H_k^{k-l}(U,W)$ for any partition $U,W$ of $V(H)$ with $|W|=m$.

Our paper is organized as follows.
In Section 2, we  prove Theorem~\ref{main-thm} for $k$-graphs $H$ that are
$\varepsilon$-close to $H^{k-l}_k(U,W)$ (for any $\varepsilon<  (8^{k-1}k^{5(k-1)}k!)^{-3}$). In fact, in this case, we prove a more general
result about degree threshold for matchings of any size less than $ n/k$.
In Section 3, we prove an absorbing lemma that ensures the existence of a small matching $M_a$ in $H$ such
  that for any small set $S$, the subgraph of $H$ induced by $V(M_a)\cup S$ has a nearly perfect
  matching.
This is done by a standard second moment method.  In Section 4, we
show that if a $k$-graph does not have very large independence number but  has large
minimum $l$-degree then it has a perfect fractional   matching.
This is done  by proving a stability version of
a result of Frankl. In Section 5, we will see that we can control the independence
number of a $H$ when $H$ is not close to $H_k^{k-l}(U,W)$ for any
partition $U,W$ of $V(H)$ with $|W|=m$
(see Lemma~\ref{4.1}), which also allows us to apply the
hypergraph container result to control the independence number of
random subgraphs of $H-V(M_a)$.
 We find random subgraphs of $H-V(M_a)$ with
perfect fractional  matchings  and use them to find an almost perfect matching in $H-V(M_a)$, using the approach in
  \cite{AFHRRS12} which reduces the problem to a result of Frankl and
  R\"{o}dl \cite{FR85} (see Lemma~\ref{Rodl}).
In Section 6, we complete the proof of Theorem~\ref{main-thm} by applying the absorbing lemma from Section 3 to
  convert the almost perfect matching to the desired matching.
We also show how our proof implies a result on perfect matchings in 3-graphs proved by  K\"{u}hn, Osthus, and Treglown \cite{KOT13} and,
independently, Khan \cite{Kh13}.

We end this section with additional notation and terminology. For any
positive integer $n$, let $[n]:=\{1, \ldots, n\}$.  Let $H$ be a hypergraph. For $S\subseteq V(H)$, we use $H-S$ to denote the hypergraph
obtained from $H$ by deleting $S$ and all edges of $H$ with a vertex in $S$, and we use $H[S]$ to denote the hypergraph with vertex set $S$ and edge set $\{e\in E(H) : e\subseteq S\}$.
For $S\subseteq R\subseteq V(H)$, let $N_{H-R}(S)=\{T\subseteq V(H)\setminus R  : S\cup T\in E(H)\}$, and let $N_H(S):=N_{H-S}(S)$.
%The {\it link graph} of $S$ in $H$ is the hypergraph with vertex set $V(H)\setminus S$ and edge set $N_H(S)$. Thus, if $H$ is a $k$-graph and $|S|=l<k$ then the
%link graph of $S$ in $H$ is a $(k-l)$-graph.

\section{Hypergraphs close to $H_k^{k-l}(U,W)$}

In this section, we prove Theorem \ref{main-thm} for the case when $H$ is
close to $H_k^{k-l}(U,W)$ for some partition $U,W$ of $V(H)$ with $|W|=m$.
Actually, in this case we prove a more general statement by
considering the existence of an $(m+1)$-matching in $H$ for all  $m\le
n/k  -1$.
However, in the case when $m\le n/(2k^4)$, we do not require $H$ to be
close to $H_k^{k-l}(U,W)$ or $l>k/2$.

\begin{lemma}\label{small-matching}
	Let $n,m,k, l$ be positive integers such that $k\ge 3$, $m\le n/(2k^4)$, and $ l\in [k-1]$.
	Let $H$ be a $k$-graph on $n$ vertices  and  $\delta_l(H)>
        {n-l\choose k-l}-{(n-l)-m\choose k-l}$. Then $\nu(H) \ge m+1$
%$\nu(H)\geq m+1$.
\end{lemma}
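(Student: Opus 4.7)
I plan to prove the lemma by contradiction. Assuming $\nu(H)\le m$, let $M=\{e_1,\dots,e_s\}$ be a maximum matching with $s\le m$, and set $I=V(H)\setminus V(M)$, which is independent in $H$ by maximality. The first step is the elementary observation that, since $I$ is independent, every edge $f$ of $H$ containing an $l$-set $T\subseteq I$ must satisfy $(f\setminus T)\cap V(M)\neq\emptyset$; hence
\[
d_H(T)\le \binom{n-l}{k-l}-\binom{n-sk-l}{k-l},
\]
which together with the degree hypothesis $\delta_l(H)>\binom{n-l}{k-l}-\binom{n-l-m}{k-l}$ forces $sk>m$, so $s>m/k$.

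The heart of the argument will be an augmenting swap. For each $e_i\in M$, consider the induced subgraph $H_i:=H[I\cup e_i]$, which has $n-(s-1)k\ge 2k$ vertices (using $m\le n/(2k^4)$). If $\nu(H_i)\ge 2$ for some $i$, then two vertex-disjoint edges of $H_i$ together with $M\setminus\{e_i\}$ form a matching of size $s+1$, contradicting the maximality of $M$. I may therefore assume $\nu(H_i)\le 1$ for every $i$, so each $E(H_i)$ is an intersecting family of $k$-subsets on $n-(s-1)k$ vertices; Erd\H{o}s--Ko--Rado then gives $|E(H_i)|\le\binom{n-(s-1)k-1}{k-1}$, with equality essentially only when $H_i$ is a star.

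The contradiction will follow from a case analysis on whether the $H_i$ are stars. If some $H_i$ is not a star, the sharper Hilton--Milner bound applies, and the double count $\sum_{T\in\binom{I}{l}}d_H(T)=\sum_{f\in E(H)}\binom{|f\cap I|}{l}$, comparing the degree-based lower bound with the upper bound $\sum_i|E(H_i)|\binom{k-1}{l}+B\binom{k-2}{l}$ (where $B$ denotes the number of edges meeting at least two $e_j$'s, bounded crudely by $\binom{s}{2}k^2\binom{n-2}{k-2}$), produces the contradiction. If every $H_i$ is a star with some center $v_i^*\in I\cup e_i$, then every edge of $H_i$ through an $l$-set $T\subseteq I$ contains $v_i^*$, which restricts $d_H(T)$ sharply. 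Summing $\sum_i d_{H_i}(T)$ plus the cross-edge contribution yields an upper bound on $d_H(T)$ that, via the strict inequality in the hypothesis and $m\le n/(2k^4)$, contradicts the lower bound.

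The main obstacle will be the star case, because the extremal construction $H_k^k(U,W)$ realizes exactly this structure (with $v_i^*=w_i\in W\cap e_i$) and saturates the degree bound with equality. Hence all the slack must come from the strict inequality $\delta_l(H)>\binom{n-l}{k-l}-\binom{n-l-m}{k-l}$ together with the control on cross-edges provided by $m\le n/(2k^4)$. Closing this tight gap will likely require summing the strict inequality over many $l$-sets $T\subseteq I$ simultaneously rather than applying it to a single $T$, so that the small discrepancies aggregate to overwhelm the EKR upper bound on $\sum_i|E(H_i)|$.
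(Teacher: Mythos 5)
The paper proves this lemma by a short induction on $m$: for the inductive step it locates a vertex $v\in V(M)$ (where $M$ is a maximum matching of size $\le m$) with $d_H(v)>e(H)/(km)\gg km\binom{n-2}{k-2}$, observes that $\delta_l(H-v)$ satisfies the hypothesis for $m-1$, invokes the inductive hypothesis to get an $m$-matching $M'$ in $H-v$, and then extends $M'$ by an edge through $v$ disjoint from $V(M')$. Your plan is a genuinely different, non-inductive route through Erd\H{o}s--Ko--Rado and Hilton--Milner plus a double count over $l$-sets $T\subseteq I$.

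The early steps of your plan are sound (the bound $sk>m$, the observation that $\nu(H_i)\le 1$, the applicability of EKR since $n-(s-1)k\gg 2k$), but the all-stars case has a gap that I don't think closes. If every $H_i$ is a star, its centre must lie in $e_i$ (since $e_i\in E(H_i)$ and $e_i\cap I=\emptyset$), so the structure really is that of $H_k^k(U,W)$ with $W=\{v_1^*,\dots,v_s^*\}$, and in that extremal example the decomposition $d_H(T)=\sum_i d_{H_i}(T)+(\text{cross-edges through }T)$ already has a \emph{substantial} cross-edge term: $\sum_i d_{H_i}(T)\le s\binom{|I|+k-l-1}{k-l-1}$ is strictly smaller than the extremal degree $\binom{n-l}{k-l}-\binom{n-l-m}{k-l}$, and the gap is filled by edges meeting two or more of the $e_j$'s. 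The strict inequality in $\delta_l(H)$ is an excess of a single edge per $T$ and can be absorbed entirely by cross-edges, which do not directly yield an augmentation and do not force any $H_i$ off the EKR extremum. So ``summing the strict inequality over many $T$'' does not by itself overwhelm the upper bound: the crude cross-edge estimate $\binom{sk}{2}\binom{n-l-2}{k-l-2}$ is comparable in magnitude to (indeed larger by a factor of order $k$ than) the slack in the main term. Some genuinely new idea would be needed to turn a few extra cross-edges into an $(m+1)$-matching, and that is precisely the hard case you flagged yourself. The paper's induction sidesteps this entirely because the vertex-deletion step never has to reason about the internal structure of the $H_i$'s; it only needs a single vertex of high degree and the easy count $km\binom{n-2}{k-2}<d_H(v)$, both of which come cheaply from $m\le n/(2k^4)$.
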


\pf
	We apply induction on $m$.
	When $m=0$, we have $\delta_l(H)>0$; so $\nu(H)\geq 1$.
	Now assume $m\ge 1$ and  that the assertion  holds when $m$ is
        replaced with $m-1$.
	Let $M$ be a maximum matching in $H$, and assume $|M| \le m$.
	
	Since $M$ is a maximum matching in $H$,  every edge of $H$  intersects $M$.
	So there exists a vertex $v\in V(M)$ such that
	\begin{align*}
		d_H(v)>\frac{e(H)}{km}.
	\end{align*}	
	Note that $e(H)\ge\delta_l(H){n\choose l}/{k\choose l}$, and
	\begin{align*}
		\delta_l(H)
		&> {n-l\choose k-l}-{(n-l)-m\choose k-l} \quad \mbox{(by assumption)}\\
		&={n-l\choose k-l}\left(1-\prod_{i=0}^{m-1}\frac{n-k-i}{n-l-i}\right)\\
		&>{n-l\choose k-l}\left(1-\left(1-\frac{k-l}{n-l}\right)^{m}\right) \\
		&>{n-l\choose k-l}\left(1-\left(1-m \frac{k-l}{n-l} +\binom{m}{2} \left(\frac{k-l}{n-l}\right)^2\right)\right) \\
		&>\frac{m(k-l)}{2(n-l)}{n-l\choose k-l},
	\end{align*}
	where the last inequality holds because $m\le n/(2k^4)$.
	Thus we have
	$$d_H(v)>\frac{e(H)}{km}\ge \frac{\delta_l(H){n\choose l}}{km{k\choose l}}
		>\frac{(k-l)}{2nk}{n-l\choose k-l}\frac{{n\choose l}}{{k\choose l}}
		\geq\frac{1}{2k^2}{n-1\choose k-1}.$$

		Note that
	\begin{align*}
		\delta_l(H-v)
		& \ge \delta_l(H)-\binom{n-(l+1)}{k-(l+1)} \\
                & > {n-l\choose k-l}-{(n-l)-m\choose k-l}-\binom{n-(l+1)}{k-(l+1)} \\
		& = {(n-1)-l\choose k-l}-{((n-1)-l)-(m-1)\choose k-l}.
	\end{align*}
	Hence, by induction hypothesis, $H-v$ has a matching of size
	$m$, say $M'$.

	The number of edges of $H$ containing $v$ and intersecting
        $V(M')$ is at most $	km{n-2\choose k-2}$. Since $n\geq 2k^4m$,
	\begin{align*}
		km{n-2\choose k-2}<\frac{1}{2k^2}{n-1\choose k-1}<d_H(v).
	\end{align*}
	Thus $H-V(M')$ contains an edge $e$ such that $v\in e$. Now $M'\cup \{e\}$
	is a matching in $H$ of size $m+1$.
\qed

\medskip

For the case when $m>n/(2k^4)$, we use the structure of  $H_k^{k-l}(U,W)$ to help us
construct the desired matching in $H$. First, we prove a lemma for the  case where, for each
vertex $v\in V(H)$, only a small number of edges of $H_k^{k-l}(U,W)$ containing $v$ do  not belong to $H$.

Let $H$ be a $k$-graph and let $U,W$ be a partition of $V(H)$ and let $n=|U|+|W|$. Given
real number $\alpha$ with $0<\alpha <1$, a vertex $v\in V(H)$ is called
\emph{$\alpha$-good} with respect to $H_k^{k-l}(U,W)$ if
$$\left|N_{H_k^{k-l}(U,W)}(v)\setminus N_H(v)\right|\le \alpha n^{k-1}; $$
and, otherwise, $v$ is called
\emph{$\alpha$-bad}. This notion quantifies the closeness of $H$ to $H_k^{k-l}(U,W)$ at a vertex.
Clearly, if $H$ is $\varepsilon$-close to $H_k^{k-l}(U,W)$, then the
number of $\alpha$-bad vertices in $H$ is at most $k\varepsilon n/\alpha$.

Note that in the statement of the lemma below we use $m\ge n/(2k^5)$ for its application in the proof of Lemma~\ref{Phk},
rather than $m\ge n/(2k^4)$ as opposed to Lemma~\ref{small-matching}.

\begin{lemma}\label{good}
	Let $k,l, m,n$ be integers and $\alpha$ be a positive real, such that $k\ge 3$, $l\in [k-1]$,
         $\alpha<(8^{k-1}k^{5(k-1)}k!)^{-1}$,  $n\ge 8k^6$, and $n/(2k^5) < m \le  n/k$.
	Suppose that $H$ is a $k$-graph and $U,W$ is a partition of
        $V(H)$ such that $|W|=m$, $|U|=n-m$, and
	every vertex of $H$ is $\alpha$-good with respect to $H_k^{k-l}(U,W)$.
	Then $\nu (H)\ge m$.
\end{lemma}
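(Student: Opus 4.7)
The plan is to build the matching of size $m$ greedily, so that each matching edge consists of exactly one vertex of $W$ and $k-1$ vertices of $U$. Order $W=\{w_1,\dots,w_m\}$ arbitrarily and process the $w_i$ in order. Having chosen pairwise disjoint $(k-1)$-subsets $S_1,\dots,S_{i-1}\subseteq U$ with $\{w_j\}\cup S_j\in E(H)$ for $j<i$, I set $U_i:=U\setminus\bigcup_{j<i}S_j$ and look for $S_i\in\binom{U_i}{k-1}$ with $\{w_i\}\cup S_i\in E(H)$. By the $\alpha$-goodness of $w_i$, the number of admissible $S_i$ is at least $\binom{|U_i|}{k-1}-\alpha n^{k-1}$. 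Since $m\le n/k$ implies $|U_i|\ge|U|-(m-1)(k-1)\ge k-1$, greedy is always well-defined, and for $|U_i|$ not too small the lower bound is strictly positive, so the extension is trivial.

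The difficulty is the endgame: for the last few $i$ the set $U_i$ may shrink so far that $\binom{|U_i|}{k-1}\le \alpha n^{k-1}$, and a direct greedy step can fail. I would handle this by a swap argument. When no admissible $S_i\subseteq U_i$ exists, find some index $j<i$ and disjoint $(k-1)$-subsets $S_j'$ and $S_i$ of $S_j\cup U_i$ such that $\{w_j\}\cup S_j'\in E(H)$ and $\{w_i\}\cup S_i\in E(H)$; then replace $\{w_j\}\cup S_j$ in the partial matching by $\{w_j\}\cup S_j'$ and add $\{w_i\}\cup S_i$, thereby extending the matching to cover $w_i$. A counting argument using the $\alpha$-goodness of both $w_i$ and $w_j$ shows that the number of $(S_j',S_i)$ ruled out by $H$-non-edges is at most $2\alpha n^{k-1}$, while the total number of disjoint $(k-1,k-1)$-partitions of $(2k-2)$-subsets of $S_j\cup U_i$, summed over $j<i$, is comfortably larger when $\alpha<(8^{k-1}k^{5(k-1)}k!)^{-1}$.

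The main obstacle will be making this endgame step rigorous: one must show that a valid triple $(j,S_j',S_i)$ always exists, and that iterated swaps (one per failing step) do not undo earlier progress. The strict bound on $\alpha$ is used precisely here: it guarantees that the total number of bad configurations, across all remaining steps and all swap candidates, is a tiny fraction of the total, so the swap argument has slack to spare. I expect the proof to organize this by carrying out the counting carefully for each failing step (perhaps processing them in reverse, from the last step back, to avoid cascades), and invoking the hypotheses $n\ge 8k^6$ and $m>n/(2k^5)$ to control the lower-order terms. The bound $\alpha<(8^{k-1}k^{5(k-1)}k!)^{-1}$ is clearly tailored to the numerics of exactly this sort of endgame counting.
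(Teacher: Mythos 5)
Your greedy-plus-swap strategy differs substantially from the paper's proof, and the swap step as you describe it has a genuine counting gap that I do not think can be repaired without a fundamentally different idea.

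The problem is in the endgame, precisely where you flag the difficulty. Suppose the greedy step fails at index $i$: then every $T\in\binom{U_i}{k-1}$ gives a non-edge $\{w_i\}\cup T$, so by $\alpha$-goodness of $w_i$ you have $\binom{|U_i|}{k-1}\le\alpha n^{k-1}$. Now consider your proposed swap. For a fixed $j<i$, the pool of ordered pairs $(S_j',S_i)$ of disjoint $(k-1)$-subsets of $S_j\cup U_i$ has size $\binom{|U_i|+k-1}{k-1}\binom{|U_i|}{k-1}$, and the number ruled out by the goodness of $w_j$ alone is up to $\alpha n^{k-1}\binom{|U_i|}{k-1}$. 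For a good pair to survive you therefore need $\binom{|U_i|+k-1}{k-1}>\alpha n^{k-1}$ (and even a little more to also absorb the $w_i$ side). But $\binom{|U_i|+k-1}{k-1}$ exceeds $\binom{|U_i|}{k-1}$ only by a bounded factor (at most $2^{k-1}$ once $|U_i|\ge 2(k-1)$, and a constant $\binom{2k-2}{k-1}$ when $|U_i|=k-1$), while the constraint $\binom{|U_i|}{k-1}\le\alpha n^{k-1}$ gives you no lower bound at all. In the last few steps $|U_i|$ is of order $k$, so $\binom{|U_i|+k-1}{k-1}$ is a constant in $n$ and is dwarfed by $\alpha n^{k-1}$: there may simply be no admissible $(S_j',S_i)$ for any $j$. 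Summing over the $i-1$ choices of $j$ does not help, because both the good total and the bad total scale linearly in $i-1$ with the same per-$j$ factors. In short, a swap that reuses only one previously chosen matching edge $e_j$ produces at most $O\bigl(\binom{|U_i|+k-1}{k-1}^2\bigr)$ candidate reconfigurations per $j$, which is of the same order as (or smaller than) the $\alpha n^{k-1}$ budget of forbidden sets, so there is no slack; the bound $\alpha<(8^{k-1}k^{5(k-1)}k!)^{-1}$ does not rescue this.

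The paper avoids this entirely. It does not process $W$ greedily; instead it takes a maximum matching $M$ among those using each $W$-vertex at most once, sets $t=|M|$, shows $t\ge m/2$ by a short goodness argument, and then, assuming $t<m$, chooses one extra $k$-set $S=\{u_1,\dots,u_k\}$ disjoint from $V(M)$ (with $u_k\in W$, $S\setminus\{u_k\}\subseteq U$) together with any $k-1$ edges $e_1,\dots,e_{k-1}$ of $M$. Arranging these $k$ $k$-sets into a $k\times k$ array and reading off the $k$ diagonals yields $k$ candidate edges $f_1,\dots,f_k$; if all were present one could enlarge $M$, so at least one $f_i$ is a non-edge, and it charges a forbidden pair to one of $u_1,\dots,u_k$. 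Different choices of $e_1,\dots,e_{k-1}$ produce disjoint diagonal systems, so summing over all $\binom{t}{k-1}$ choices gives $\sum_{i=1}^k|N_{H_k^{k-l}(U,W)}(u_i)\setminus N_H(u_i)|\ge\binom{t}{k-1}\gtrsim (n/(8k^5))^{k-1}/(k-1)!$, which exceeds $k\alpha n^{k-1}$ under the hypothesis on $\alpha$, contradicting the goodness of some $u_j$. The essential gain over a single swap is that rotating $k-1$ matching edges simultaneously multiplies the count of usable configurations from $O(n)$ to $\Theta(n^{k-1})$, which is exactly what is needed to beat the $\alpha n^{k-1}$ goodness threshold. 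To fix your proof you would have to adopt some version of this multi-edge rotation; the one-edge swap cannot work.
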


\pf
	We find a matching of size $m$ in $H$
	using those edges intersecting $W$ just once.
	Let $M$
	be a maximum matching in $H$
	such that $|e\cap W|=1$ for each $e\in M$, and let $t=|M|$.
	We may assume $t<m$; or else the desired matching exists.  So $W\setminus V(M)\ne \emptyset$.
	 By the maximality of
	$M$, $N_H(x)\cap {U\setminus V(M)\choose k-1}=\emptyset$ for all $x\in W\setminus V(M)$.

	We claim that $t\ge m/2$.
	For, suppose $t< m/2$. Since
	$m\le n/k$, $t<n/(2k)$; so $|V(H)\setminus
        V(M)|=n-tk>n-n/2=n/2$. Hence,
       $$|U\setminus V(M)|>|V(H)\setminus V(M)|-|W|\ge n/2-n/k\ge n/6.$$
       Thus, for any $x\in W\setminus V(M)$,
	\[
		\left| N_{H_{k}^{k-l}(U,W)}(x) \setminus N_H(x)\right|\geq \bigg| {|U\setminus V(M)|\choose k-1} \bigg| > {n/6\choose k-1}>\alpha n^{k-1},
	\]
	contradicting the assumption that $x$ is $\alpha$-good.

 	Since $t<m\le n/k$ and $|e\cap W|=1$ for each $e\in M$, there exists a $k$-set
  	$S=\{u_1,\ldots,u_k\}\subseteq V(H)\setminus V(M)$ such that $u_k\in W$ and
  	$S\setminus \{u_k\}\subseteq U$.
	Since $m\ge n/(2k^5)> 2k$, we have $t\ge m/2> k$.

	Arbitrarily choose $k-1$ pairwise distinct edges $e_1,\dots ,e_{k-1}$ from $M$
	and write $e_i := \{v_{i,1},v_{i,2},\ldots,v_{i,k}\}$ such that
	$v_{i,k} \in W$, and $v_{i,j} \in U$ for $j \in [k-1]$.
	For convenience, let $v_{k,j} := u_j$ for $j \in [k]$.
	For $i \in [k]$, define $f_i := \{v_{1,1+i},
	v_{2,2+i},\ldots,v_{k-1,(k-1)+i},v_{k,k+i}\}$, where the addition in the
	subscripts is modulo $k$ (except that we write $k$ for $0$).
	Then $f_i \not\in E(H)$ for some $i \in [k]$
	as, otherwise, $(M\setminus \{e_i\ :\ i\in [k-1]\})\cup \{f_i\ :\ i\in [k]\}$ is
	a matching in $H$ that contradicts the maximality of $M$.
	
        Note that for different choices of $e_1,\ldots,e_{k-1}\in M$ and $e_1',\ldots,e_{k-1}'\in M$,
        the corresponding sets  $\{f_1,\ldots, f_k\}$ and $\{f_1',\ldots, f_k'\}$ constructed in the above paragraph are disjoint.
 	Since there are ${t\choose k-1}$ choices of  $e_1,\ldots,e_{k-1}$ from $M$, we have
 	\begin{eqnarray*}
 		&& \sum_{i=1}^{k} \left|N_{H^{k-l}_k(U,W)}(u_i)\setminus N_H(u_i)\right| \\
		&\ge& {t\choose k-1} \\
		&>& \frac{(t-(k-1)+1)^{k-1}}{(k-1)!} \\
		&>&\frac{(n/(4k^5)-(k-1))^{k-1}}{(k-1)!} \quad \quad \text{(since } t\ge m/2> n/(4k^5) \text{)}\\
		&>& \frac{(n/(8k^5))^{k-1}}{(k-1)!} \quad \quad \text{(since } n\ge 8k^6 \text{)}\\
		&=& (8^{k-1}k^{5(k-1)}k!)^{-1} k n^{k-1} \\
		&>&\alpha kn^{k-1} \quad \quad \text{(since } \alpha<(8^{k-1}k^{5(k-1)}k!)^{-1} \text{)}.
 	\end{eqnarray*}
 	Thus there exists $u_j\in S$ such that
 	\[
 		\left|N_{H^{k-l}_k(U,W)}(u_j)\setminus N_H(u_j)\right|
		>\alpha n^{k-1},
 	\]
	contradicting the assumption that $u_j$ is $\alpha$-good with
        respect to $H_k^{k-l}(U,W)$.
\qed

\medskip

The next lemma takes care of Theorem~\ref{main-thm} for the case when
$m>n/(2k^4)$ and $H$ is $\varepsilon$-close to $H_k^{k-l}(U,W)$. We treat those vertices that are not
$\sqrt{\varepsilon}$-good separately by finding two matchings
covering those vertices (in
two steps, and using Lemma~\ref{small-matching}), and then, Lemma~\ref{good} will be
used to find a matching covering the remaining vertices.

\begin{lemma}\label{Phk}
	Let $k, l, m, n$ be integers and let
        $0<\varepsilon<  (8^{k-1}k^{5(k-1)}k!)^{-3}$, such that
	$k\ge 3$, $l\in [k-1]$, $n\ge 8k^6/(1-5k^2\sqrt{\varepsilon})$, and
	$n/(2k^4)<m\le n/k  -1$.  Suppose $H$ is a  $k$-graph on $n$
        vertices such that  $\delta_l(H)> {n-l\choose k-l}-{n-l-m \choose k-l}$
	and $H$ is $\varepsilon$-close to $H_k^{k-l}(U,W)$, where $|W|=m$ and $|U|=n-m$.
	Then $\nu(H)\ge m+1$, or $l=k-1$, $n\in k\Z$, $m=n/k-1$, and $\nu(H) \ge m$.
\end{lemma}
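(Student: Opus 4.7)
The plan is to produce the matching of size $m+1$ as a disjoint union $\{e_0\}\cup M_1\cup M_2$, where $e_0\subseteq U$ is a single edge, $M_1$ is a small matching covering every $\sqrt{\varepsilon}$-bad vertex of $H$, and $M_2$ is the matching furnished by Lemma~\ref{good} applied to the leftover hypergraph.

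Set $\alpha:=\sqrt{\varepsilon}$ and let $B$ be the set of $\alpha$-bad vertices with respect to $H_k^{k-l}(U,W)$.  The $\varepsilon$-closeness and the identity $\sum_v|N_{H_k^{k-l}(U,W)}(v)\setminus N_H(v)|=k|E(H_k^{k-l}(U,W))\setminus E(H)|\le k\varepsilon n^k$ force $|B|\le k\sqrt{\varepsilon}\,n$.  To produce $e_0$, pick any $l$-set $T_0\subseteq U\setminus B$ (available since $|U\setminus B|\ge(n-m)-k\sqrt{\varepsilon}n>l$); the minimum $l$-degree condition $d_H(T_0)>\binom{n-l}{k-l}-\binom{n-l-m}{k-l}$ is strictly larger than the number of $(k-l)$-subsets of $V\setminus T_0$ meeting $W$, so some edge $e_0\supseteq T_0$ of $H$ satisfies $e_0\subseteq U$.

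To build $M_1$, I emulate the greedy argument behind Lemma~\ref{small-matching} in two stages: first cover the bad vertices in $U$ by a matching $M_1^{(1)}$ each of whose edges contains exactly one $W$-vertex together with a distinct bad $U$-vertex; then, inside $H-V(e_0)-V(M_1^{(1)})$, cover the remaining bad $W$-vertices by a matching $M_1^{(2)}$ each of whose edges uses only the bad $W$-vertex from $W$.  At each greedy step, picking an $l$-set $T$ that contains the current bad vertex and exactly one $W$-vertex (with the remaining entries drawn from the unused part of $U$), the $l$-degree hypothesis supplies enough extensions $T\cup S$ with $S\subseteq U\setminus(\text{used})$: the ``forbidden'' $(k-l)$-extensions of $T$ either meet $W\setminus T$ (at most $\binom{n-l}{k-l}-\binom{n-l-m}{k-l}$ many) or meet the $O(\sqrt{\varepsilon}n)$ already-used vertices (a strictly lower-order term).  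The resulting $M_1:=M_1^{(1)}\cup M_1^{(2)}$ has $|M_1|\le|B|\le k\sqrt{\varepsilon}n$, covers $B\setminus V(e_0)$, and satisfies $|V(M_1)\cap W|=|M_1|$.

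Let $H'':=H-V(e_0)-V(M_1)$, $U'':=U\setminus(V(e_0)\cup V(M_1))$, $W'':=W\setminus V(M_1)$, so $|V(H'')|=n-k(|M_1|+1)$ and $|W''|=m-|M_1|$.  The hypothesis $n\ge 8k^6/(1-5k^2\sqrt{\varepsilon})$ yields $|V(H'')|\ge 8k^6$ and $|V(H'')|/(2k^5)<|W''|\le|V(H'')|/k$; moreover, every vertex of $H''$ is $\alpha''$-good in $H''$ with respect to $H_k^{k-l}(U'',W'')$ for $\alpha'':=2^{k-1}\sqrt{\varepsilon}$, and the bound $\varepsilon<(8^{k-1}k^{5(k-1)}k!)^{-3}$ gives $\alpha''<(8^{k-1}k^{5(k-1)}k!)^{-1}$.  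Lemma~\ref{good} therefore produces a matching $M_2$ in $H''$ of size $|W''|$, and $\{e_0\}\cup M_1\cup M_2$ is a matching in $H$ of size $1+|M_1|+(m-|M_1|)=m+1$.  The main obstacle is the greedy construction of $M_1$ with each edge using \emph{exactly} one vertex of $W$: the $l$-degree hypothesis carries only a $+1$ margin above the $(k-l)$-extension count meeting $W$, so the enforcement is thinnest at $l=k-1$, and the exceptional parameter regime $l=k-1$, $n\in k\Z$, $m=n/k-1$ is precisely where Lemma~\ref{good}'s constraint $|W''|\le|V(H'')|/k$ collapses to equality, leaving no room for any edge of $M_1$ to use a number of $W$-vertices different from $1$.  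If this enforcement fails, one simply drops $e_0$ and applies Lemma~\ref{good} directly to $H-V(M_1)$, recovering the weaker conclusion $\nu(H)\ge m$ of the exceptional case.
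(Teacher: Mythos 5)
Your construction of $e_0$ is fine, and the overall counting $1+|M_1|+|M_2|=m+1$ would indeed work \emph{if} $M_1$ could be built with every edge hitting $W$ exactly once. But the greedy argument you sketch for $M_1$ does not close, and the shortfall is not confined to the exceptional regime you flag.

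Concretely: when you extend an $l$-set $T$ containing the current bad vertex and one vertex of $W$ to an edge with the remaining $k-l$ vertices in $U$, the degree hypothesis gives $d_H(T)>\binom{n-l}{k-l}-\binom{n-l-m}{k-l}$, while the extensions that touch $W\setminus T$ number $\binom{n-l}{k-l}-\binom{n-l-m+1}{k-l}$. The surplus is therefore $\binom{n-l-m}{k-l-1}=\Theta(n^{k-l-1})$. On the other hand, the extensions that hit the $O(\sqrt{\varepsilon}\,n)$ already-used vertices number $\Theta(\sqrt{\varepsilon}\,n^{k-l})$. This second term is \emph{not} lower-order: for any fixed $\varepsilon>0$ and large $n$ it dominates the surplus by a factor of $\Theta(\sqrt{\varepsilon}\,n)$. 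So the greedy can get stuck at the very first step, for every $l\in[k-1]$, not merely when $l=k-1$. Your fallback (``drop $e_0$ and apply Lemma~\ref{good} to $H-V(M_1)$'') also presupposes the same $M_1$ exists, so it does not rescue the argument.

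The paper handles precisely this difficulty in a structurally different way. Its claim (2) establishes only a \emph{dichotomy}: for each relevant $l$-set $S$, either $\ge\beta n^{k-l}$ extensions meet $W_1$ exactly once, or $\ge\beta n^{k-l}$ extensions lie entirely outside $W_1$ — where $\beta n^{k-l}$ is deliberately taken of the same order $\Theta(n^{k-l})$ as the used-vertex losses, so the greedy survives. But this dichotomy produces some cover edges $M_{22}$ using zero $W$-vertices, which throws off the $|W''|\le|V(H'')|/k$ balance required by Lemma~\ref{good}. The paper repairs this with two further devices you have no analogue of: an initial matching $M_1$ of size $c+1$ (via Lemma~\ref{small-matching}) inside $U\cup W^{\mathrm{bad}}$, which supplies the ``$+1$'', and a parity-correcting matching $M_{23}$ whose edges hit $W_1$ exactly \emph{twice} (or an extension-based variant when $l=k-1$) to offset the $M_{22}$ edges that hit $W_1$ zero times. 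It is this bookkeeping, not the exact-one greedy, that makes the counts work out and is also where the exceptional case $l=k-1$, $n\in k\Z$, $m=n/k-1$ genuinely arises.
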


\pf
	Since $H$ is $\varepsilon$-close to $H_k^{k-l}(U,W)$,
	all but at most $k\sqrt{\varepsilon}n$ vertices of $H$
	are $\sqrt{\varepsilon}$-good with respect to $H_k^{k-l}(U,W)$.
	Let $U^{bad}$ (respectively, $W^{bad}$)  denote the set of
        $\sqrt{\varepsilon}$-bad vertices in $U$ (respectively, $W$) with respect to $H_k^{k-l}(U,W)$.
	So $|U^{bad}|+|W^{bad}|\leq k\sqrt{\varepsilon}n$.
	Let $c:=|W^{bad}|$, $V_1:=U\cup W^{bad}$, and $W_1:=W\setminus
        W^{bad}$.
	Note that possibly $c=0$ and $W^{bad}=\emptyset$.
	We deal with vertices in $W_1$ later since at those vertices  $H$ and
        $H_k^{k-l}(U,W)$ are ``close''.
       We claim that

     \begin{itemize}
         \item [(1)] $H[V_1]$ has a matching $M_1$ of size $c+1$.
     \end{itemize}
        To see this, let $s$ be the maximum number of edges in $H$ intersecting $W_1$
	and containing a fixed $l$-set in $V_1$. Then $s\le {n-l\choose k-l}-{n-l-(m-c)\choose k-l}$ and
	 $\delta_l(H[V_1])\ge \delta_l(H)-s$. Hence,
	\begin{align*}
		\delta_l(H[V_1]) \ge \delta_l(H)-s > {(n-m+c)-l\choose k-l}-{(n-m+c)-l-c\choose k-l}.
	\end{align*}
        Since $n/(2k^4)<m<n/k\le n/3$,  we have $n-m+c>2m+c>n/k^4+c$. Thus, since $c\le k\sqrt{\varepsilon} n$, $n-m+c>2k^4c$ by the choice of $\varepsilon$.	
 	So by Lemma~\ref{small-matching}, $H[V_1]$ contains a matching of size $c+1$. $\Box$

\medskip

	Let $H_1:=H-V(M_1)$. Next, we cover $U^{bad}\cup W^{bad}$ with two matchings in $H_1$, which use edges intersecting $W_1$ at most once.
       First note that, for each $l$-set $S\subseteq V_1\setminus V(M_1)$, $H_1$ has lots of edges  containing $S$ and intersecting $W_1$ just once, or
       $H_1$ has lots of edges of containing $S$ and contained in $V_1\setminus V(M_1)$. More precisely, we show that
       \begin{itemize}
        \item [(2)]   for any real number $\beta$ with $2k^2\sqrt{\varepsilon}<\beta<(2k)^{-(k-l+3)}/2-k^2\sqrt{\varepsilon}$ (which exists as $\varepsilon<(2k)^{-2k-11}$ and $k\ge 3$) and
         for any $S\in{ V_1\setminus V(M_1)\choose l}$,
	\[
		\left|\{T\in N_{H_1}(S)\ :\ |T\cap W_1|=1\} \right|\geq \beta n^{k-l}, \quad \text{or}
	\]
	\[
		\left|\{T\in N_{H_1}(S)\ :\ T\subseteq V_1\setminus V(M_1)\}\right|\ge \beta n^{k-l}.
	\]
       \end{itemize}
	For, suppose  $S\in{ V_1\setminus V(M_1)\choose l}$ and $|\{T\in N_{H_1}(S)\ :\ |T\cap W_1|=1\} |< \beta n^{k-l}$.
	Since
	\[
		|\{T\in N_{H_1}(S)\ :\ |T\cap W_1|\ge 2\} |
		\le \sum_{i=2}^{k-l}\binom{m}{i}\binom{n-l-m}{k-l-i}
		%\binom{m}{2} \binom{n-l-2}{k-l-2},
	\]
	and
         $$| \{T\in N_{H}(S)\ :\ |T\cap V(M_1)|\ge 1\}|\le
        k(c+1)n^{k-l-1} <2k^2\sqrt{\varepsilon}n^{k-l},$$ we have
	\begin{eqnarray*}
		&& \ |\{T\in N_{H_1}(S)\ :\ T\subseteq V_1\setminus V(M_1)\}| \\
		&> &\delta_l(H)-|\{T\in N_{H_1}(S)\ :\ |T\cap W_1|\ge 2\} | \\
		  &&\quad \quad -|\{T\in N_{H_1}(S)\ :\ |T\cap W_1|=1\} |-2k^2\sqrt{\varepsilon}n^{k-l}\\
                &> &\left( \binom{n-l}{k-l} - \binom{n-l-m}{k-l}\right)- \sum_{i=2}^{k-l}\binom{m}{i}\binom{n-l-m}{k-l-i}
                %\binom{m}{2} \binom{n-l-2}{k-l-2}
			               -\beta n^{k-l} 		
        	       -2k^2\sqrt{\varepsilon}n^{k-l}  	
                   \\
		&= &   m\binom{n-l-m}{k-l-1} -2k^2\sqrt{\varepsilon}n^{k-l} -\beta n^{k-l} \\
                & > & n^{k-l}/(2k)^{k-l+3}	-2k^2\sqrt{\varepsilon}n^{k-l} -\beta n^{k-l}   \quad \mbox{(since $n/(2k^4)\le m<n/k$ and $n\ge 8k^6$)}\\
        	&\geq & \beta n^{k-l}  \quad \mbox{ (by the choice of
                        $\beta$)}. \quad \Box
	\end{eqnarray*}

        To find matchings in $H_1$ covering $(U^{bad}\cup W^{bad})\setminus V(M_1)$,
	we fix a set $B\subseteq V_1\setminus V(M_1)$ such that $|B|\equiv 0\pmod l$,
	$(U^{bad}\cup W^{bad})\setminus V(M_1)\subseteq B$, and
	$|B\setminus (U^{bad}\cup W^{bad})|< l$.
	For convenience, let $q=|B|/l$. Then
                $$q\leq k\sqrt{\varepsilon}n.$$
	We partition $B$ into $q$ disjoint $l$-sets $B_1,\ldots, B_q$
        and we may assume, by (2),  that,
        for some $q_1\in [q]\cup \{0\}$,  $|\{T\in N_{H_1}(B_i)\ :\ |T\cap W_1|=1\} |\geq \beta n^{k-l}$
	for $1\leq i\leq q_1$
	and $|\{T\in N_{H_1}(B_j)\ :\ T\subseteq V_1\setminus V(M_1)\} | \ge \beta n^{k-l}$
	for $q_1< j\leq q$. We claim that

     \begin{itemize}
        \item [(3)] there exist disjoint matchings $M_{21}$ and $M_{22}$ in $H_1$ such
	that $|M_{21}|+|M_{22}|\le k\sqrt \varepsilon n$,
	$M_{21}$ covers $\bigcup_{i=1}^{q_1}B_i$ and each edge in $M_{21}$ intersects $W_1$ just once, and
    $M_{22}$  covers $\bigcup_{i=q_1+1}^qB_i$ and each edge in $M_{22}$ is  disjoint from $W_1$.
      \end{itemize}
         	First, we find the matching $M_{21}$ covering $\bigcup{i=1}^{q_1}B_i$ (which is empty if $q_1=0$).
      Suppose for some $0\le h <q_1$ we have chosen pairwise disjoint edges $e_1,\ldots,e_h$ of $H_1=H-V(M_1)$ (which is empty when $h=0$),
        such that, for $i \in [h]$, we have
	$|e_i\cap W|=1$ and  $B_i\subseteq e_i$.
	 Since $|\{T\in N_{H_1}(B_{h+1}) : |T\cap W_1|=1\} |\geq \beta n^{k-l}$,
	 the number of edges of $H$ disjoint from
	$V(M_1)\cup \left(\bigcup_{i=1}^h e_i\right)$ but
	containing $B_{h+1}$ and exactly one vertex from  $W_1$  is at least
	\[
		\beta n^{k-l}-k|M_1|n^{k-l-1}-(hk)n^{k-l-1} \ge \beta n^{k-l}-2k^2\sqrt{\varepsilon}n^{k-l}>0,
	\]
	since $h\le q_1-1\le k\sqrt{\varepsilon} n-1$. Thus, there is an  edge $e_{h+1}$  of $H_1$ such that
	$|e_{h+1}\cap W_1|=1$, $B_{h+1}\subseteq e_{h+1}$, and $e_{h+1}\cap
	\left(\bigcup_{j=1}^{h}e_j\right)=\emptyset$. Since $q_1\le q\le k\sqrt{\varepsilon}n$, we may continue this process till $h=q_1-1$.
        Now $M_{21}=\{e_1,\ldots,e_{q_1}\}$ is the desired matching
        that covers $\bigcup_{i=1}^{q_1}B_i$.

	Next,  we find the matching $M_{22} =\{e_j : q_1< j\leq q\}$,
	such that for $q_1< j\leq q$, $B_j\subseteq e_j$ and $e_j\subseteq V_1\setminus V(M_1)\setminus \left(\bigcup_{s=1}^{j-1}e_s\right)$.
	Suppose that we have chosen $e_1, \ldots, e_{q_1},\ldots, e_s$ for some $s$ with
	$q_1\le s<q$ (which is empty if $q_1=q$).
	Since $|\{T\in N_{H_1}(B_{s+1}) : T\subseteq V_1\setminus V(M_1)\} | \ge \beta n^{k-l}$,
	the number of edges in $H$ disjoint from  $V(M_1)\cup \left(\bigcup_{i=1}^s e_i\right)\cup W_1$
	but containing $B_{s+1}$
	is at least
	\[
		\beta n^{k-l}-k|M_1|n^{k-l-1}-(sk)n^{k-l-1}
		\ge \beta n^{k-l}-2k^2\sqrt{\varepsilon}n^{k-l}>0,
	\]
        since $s\le q-1\le k\sqrt{\varepsilon} n-1$.
	So there exists an edge $e_{s+1}$ of $H_1$ such that
        $B_{s+1}\subseteq e_{s+1}$ and
	$e_{s+1}\cap \left(\bigcup_{i=1}^s e_i\right)=\emptyset$. Since $q\le k\sqrt{\varepsilon}n$,
        we may continue this process till $s=q-1$. Now
        $M_{22}=\{e_{q_1+1}, \ldots, e_q\}$ gives the desired matching
        that covers $\bigcup_{i=q_1+1}^q B_i$. \quad $\Box$

\medskip

	Now, every vertex in $V(H) \setminus V(M_1\cup M_{21}\cup M_{22})$ (as a vertex of $H$) is
	$\sqrt{\varepsilon}$-good with respect to $H_k^{k-l}(U,W)$. 	
	In order to apply Lemma~\ref{good}, we find a matching $M_{23}$ in $H_1-V(M_{21}\cup M_{22})$
%and a set $T\subseteq V(H_1)\setminus V(M_{21}\cup M_{22})$ with $|T|<2k$
        such that  every vertex of $H_2:=H_1-V(M_{21}\cup M_{22}\cup M_{23})$ is $\varepsilon^{1/3}$-good with respect to $H_k^{k-l}(U^*,W^*)$,
        where $U^*=U\cap V(H_2)$ and $W^*=W\cap V(H_2)$, $|U^*|+|W^*|\ge
        8k^6$, and  $(|U^*|+|W^*|)/(2k^4)< |W^*|\le
        (|U^*|+|W^*|)/k$. So we need (4) and (5) below.

  \begin{itemize}
    \item [(4)] There exists a matching $M_{23}$ in $H_1-V(M_{21}\cup M_{22})$  such that $|M_{23}|<k\sqrt{\varepsilon}n$ and,
         if we let  $H_2:=H_1-V(M_{21}\cup M_{22}\cup M_{23})$,
                $U'=U\cap V(H_2)$, $W'=W\cap V(H_2)$, then, for some
                $r\in \{0,1\}$ with $r=0$
                for $l\le k-2$, we have
               $|W'|-r=m-c-|M_{21}|-|M_{22}|-|M_{23}|$, $|U'|+|W'|-r\ge 8k^6$, and
                 $(|U'|+|W'|-r)/(2k^5)< |W'|-r$. Moreover,   $|W'|-r\le (|U'|+|W'|-r)/k$, or
                 $l=k-1$, $r=1$,  $n\in k\Z$, $m=n/k-1$, and
                 $|W'|-r\le (|U'|+|W'|)/k$,

% when $l\le k-2$, and  $|W'|=m-c-|M_{21}|-|M_{22}|-|M_{23}|-1$ when $l=k-1$.
 \end{itemize}	
       Note $|M_1\cup M_{21}\cup M_{22}|= (c+1)+q \leq
         3k\sqrt{\varepsilon}n$ as $c,q\le
         k\sqrt{\varepsilon}n$. We consider two cases.

\medskip

   {\it Case} 1. $l\le k-2$.

       In this case, we construct the matching
         $M_{23}$ as follows.    Suppose for some
        $1\le t\le q-q_1$, we found vertices $x_1, \ldots, x_{t-1}$ in $U\setminus V(M_1\cup
        M_{21}\cup M_{22})$ and edges $f_1, \ldots, f_{t-1}$ in
        $H_1-V(M_{21}\cup M_{22})$ such that, for $i\in [t-1]$,
        we have $x_i\in f_i$, $|f_i\cap W_1|=2$,
        and $f_i\cap
        \left(\bigcup_{j=1}^{i-1}f_j\right)=\emptyset$. (When $t=1$,
        these sequences are empty.) Let
        $x_{t}\in U\setminus V(M_1\cup
        M_{21}\cup M_{22})\setminus
        \left(\bigcup_{i=1}^{t-1}f_i\right)$. Since $x_t$ is $\sqrt{\varepsilon}$-good with respect to $H_k^{k-l}(U,W)$, the number of edges of
        $H_1-V(M_{21}\cup M_{22})-\left(\bigcup_{i=1}^{t-1}f_i\right)$
         containing $x_{t}$ and exactly two vertices in $W_1$ is at least
         \[
         	{m-c-2(t-1)\choose 2}{n-m-1\choose k-3}-\sqrt{\varepsilon}n^{k-1}-(3k\sqrt{\varepsilon} n)
         	n^{k-2}-(kt) n^{k-2}>0,
         \]
         as $n/(2k^4)<m$, $c< k\sqrt{\varepsilon}n$, $t<k\sqrt{\varepsilon}n$, and  $\varepsilon<(8^{k-1}k^{5(k-1)}k!)^{-3}$.
         So there exists
         an edge
         $f_{t}$ in $H_1-V(M_{21}\cup
         M_{22})-\left(\bigcup_{i=1}^{t-1}f_i\right)$ such that
         $x_{t}\in f_{t}$ and $|f_{t}\cap W_1|=2$.
	This process works as long as  $t\le q-q_1$. Thus, we have a matching   $M_{23}=\{f_j\ :\ j\in [q-q_1]\}$ such that, for $j\in [q-q_1]$,
	$f_j\subseteq V(H_1)\setminus V(M_{21}\cup M_{22})\setminus \left(\bigcup_{i=1}^{j-1}f_i\right)$
	and $|f_j\cap W_1|=2$.

	 Let $H_2:=H_1-V(M_{21}\cup M_{22}\cup M_{23})$ and let $U'=U\cap V(H_2)$ and $W'=W\cap V(H_2)$.
	 Note that $|M_{23}|=|M_{22}|$,
        and note that
        \begin{align*}
        		& |W'|=|W|-c-|M_{21}|-2|M_{23}|=|W|-c-|M_{21}|-|M_{22}|-|M_{23}|, \mbox{ and}\\
		        & |U'|=|U|- (k(c+1)-c)-(k-1)|M_{21}|-k|M_{22}|-(k-2)|M_{23}| \\
		 & \quad \ ~ = |U|-(k-1)(c+1+|M_{21}|+|M_{22}|+|M_{23}|)-1.
	 \end{align*}	
        Hence, we have
        \begin{align*}
        	|U'|+|W'|=|U|+|W|-k(c+1)-k|M_{21}|-k|M_{22}|-k|M_{23}|.
	 \end{align*}
        Thus, $|U'|+|W'|\ge n-5k^2\sqrt{\varepsilon}n\ge 8k^6$  and, since $m\le n/k -1$,
       \begin{align*}
          	(|U'|+|W'|)/k & = (|U|+|W|)/k-(c+1)-|M_{21}|-|M_{22}|-|M_{23}| \\
			& \ge (|W|+1)-(c+1)-|M_{21}|-|M_{22}|-|M_{23}|\\
			& =|W'|.
	\end{align*}
        Moreover,  since $|W|> n/(2k^4)$ and $|W|\ge|W'|\ge |W|-
        3k\sqrt{\varepsilon} n$, we have
     \begin{eqnarray*}
          & &  (|U'|+|W'|) -2k^5|W'|\\
          & = &   |U|+|W|-k(c+1)-k|M_{21}|-k|M_{22}|-k|M_{23}|-2k^5|W'|\\
          & < & |U|+|W|-2k^5|W'| \\
          %2k^4|W|-k(c+1)-k|M_{21}|-2k|M_{22}|-2k^5|W'|\\
          & < & 2k^4|W|-2k^5|W'|\\
         & < & 0  \quad (\mbox{since $n$ is large and $\varepsilon$ is small})
     \end{eqnarray*}

    {\it Case} 2. $l=k-1$.

         Choose arbitrary $q-q_1$ pairwise disjoint $(k-1)$-sets in $V(H)\setminus V(M_1\cup M_{21}\cup M_{22})$,
        each containing exactly two vertices in $W_1$. Note that this can be done, because $|W_1|=m-c\ge n/(2k^4)-k\sqrt{\varepsilon}n>2q$.
        Since $\delta_{k-1}(H)> m \ge n/(2k^4)>5k^2\sqrt\varepsilon n\ge  k((c+1)+3q)$, we can extend these $q-q_1$ sets
        to $q-q_1$ pairwise disjoint edges $f_1, \ldots, f_{q-q_1}$  in $H-V(M_1\cup M_{21}\cup M_{22})$.

          Clearly,  each $f_i$ contains either two or three vertices from
        $W_1$. Thus,  there exists some integer $p$ with $0\le p\le q-q_1$ such that $q-q_1+p-1\le|W_1\cap \left(\bigcup_{i=1}^{p}f_i\right)|\le q-q_1+p$.
	Let $M_{23}=\{f_1, \ldots, f_p\}$, $H_2:=H_1-V(M_{21}\cup M_{22}\cup M_{23})$, and $U'=U\cap V(H_2)$ and $W'=W\cap V(H_2)$.

          Note that $|W_1\cap V(M_{23})|=|M_{22}|+|M_{23}|-r$ for some $r\in \{0,1\}$. Hence,
         $$ |W'|= |W|-c-|M_{21}|-|W_1\cap V(M_{23})|=|W|-c-|M_{21}|-|M_{22}|-|M_{23}|+r$$ and
            $$|U'|=|U|-(k(c+1)-c)-(k-1)|M_{21}|-k|M_{22}|-(k|M_{23}|-|W_1\cap V(M_{23})|).$$ Therefore,
             $$|U'|+|W'|-r=(|U|+|W|-r)-k(c+1)-k|M_{21}|-k|M_{22}|-k|M_{23}|.$$

          It is easy to see that the same calculations in Case 1 also allow us to conclude that
        $|U'|+|W'|-r\ge 8k^6$ and
        $(|U'|+|W'|-r) -2k^5(|W'|-r)<0$. Moreover, if $r=0$ then the same argument in Case 1 shows that $|W'|\le (|U'|+|W'|)/k$. So we may assume $r=1$.

         First, suppose  $m<n/k-1$ or $n\notin k\Z$. Then $(|U|+|W|)/k\ge |W|+1+1/k$; so
       \begin{align*}
          	(|U'|+|W'|-1)/k & = (|U|+|W|-1)/k-(c+1)-|M_{21}|-|M_{22}|-|M_{23}| \\
			& \ge (|W|+1+1/k)-1/k -(c+1)-|M_{21}|-|M_{22}|-|M_{23}|\\
			& = |W'|-1.
	\end{align*}

        Now suppose  $n\in k\Z$ and $m=n/k-1$.  Then
       \begin{align*}
          	(|U'|+|W'|)/k & = (|U|+|W|)/k-(c+1)-|M_{21}|-|M_{22}|-|M_{23}| \\
			& \ge (|W|+1)-(c+1)-|M_{21}|-|M_{22}|-|M_{23}|\\
			& \ge |W'|-1. \quad \Box
	\end{align*}
  %  Hence, $(|U'|+|W'|)/k\ge |W'|-r$.  $\Box$

\medskip

   We now define $W^*\subseteq W'$ and $U^*=V(G)\setminus W^*$ as
   follows: If $r=0$ let $W^*=W'$.  If $r=1$ and $n\notin k\Z$ or $m<n/k-1$ then
   choose some $w\in W'$ and  let $W^*=W'\setminus \{w\}$. If
   $r=1$,    $n\in k\Z$, and $m=n/k-1$ then choose $w_1,w_2\in W'$ and let
   $W^*=W'\setminus \{w_1,w_2\}$.

    \medskip
   \begin{itemize}
       \item [(5)] Every vertex of $H_2:=H_1-V(M_{21}\cup M_{22}\cup M_{23})$ is
	$\varepsilon^{1/3}$-good in $H$ with respect to $H_k^{k-l}(U^*,W^*)$.
   \end{itemize}
	Note that $k|M_1\cup M_{21}\cup M_{22}\cup M_{23}|+2\leq k((c+1)+3q)+2\leq 5k^2\sqrt{\varepsilon}n$.
	For each $x\in V(H_2)$, since $x$ is $\sqrt{\varepsilon}$-good
	with respect to  $H^{k-l}_k(U,W)$, we have
	\[
		|N_{H_k^{k-l}(U,W)}(x)\setminus N_H(x)|\leq \sqrt{\varepsilon}n^{k-1}.
	\]
	Thus,
	\begin{eqnarray*}
  		& & \left|N_{H_k^{k-l}(U^*,W^*)}(x)\setminus N_{H_2}(x)\right|\\
		&\leq &\left|N_{H_k^{k-l}(U,W)}(x)\setminus
                        N_H(x)\right|+\left(k\left|M_1\cup M_{21}\cup
                        M_{22}\cup M_{23}\right|+2\right)n^{k-2}\\
  		&\leq &\sqrt{\varepsilon}n^{k-1}+5k^2\sqrt{\varepsilon}n^{k-1}\\
  		&<&\varepsilon^{1/3}n^{k-1}.  \quad \Box
	\end{eqnarray*}

   Hence, by (4) and (5), it follows from  Lemma~\ref{good} that there is a matching $M_3$ in
	$H_2$ of size $|W^*|$.
	Let $M:=M_1\cup M_{21}\cup M_{22}\cup M_{23}\cup M_3$. Then
        $M$  is a
        matching in $H$.
         If $l=k-1$, $n\in k\Z$, and $m=n/k-1$, then $|W^*|\ge
         |W'|-r-1$; so $$|M|\ge (|W'|-r-1)+
         |M_1|+|M_{21}|+|M_{22}|+|M_{23}|= n/k-1.$$
     Otherwise, $|W^*|=|W'|-r$ and    $|M|=(|W'|-r)+ |M_1|+|M_{21}|+|M_{22}|+|M_{23}|=
               m+1.$
 \qed

%%%%%%%%%%%%%%%%%%%%%%%%%%%%%%%%%%%%%%%%%%%%%%%%%%

\section{An absorbing Lemma}

A typical approach for finding large matchings in a dense $k$-graph $H$ is
to find a small matching $M_a$ in $H$ such that, for each small set $S\subseteq
V(H)\setminus V(M_a)$, $H[V(M_a)\cup S]$ has a large matching (e.g.,
an almost
perfect matching).
Such a matching $M_a$ is known as an {\it absorbing} matching, often found by applying the second moment method.
This approach was initiated   by R\"{o}dl, Ruci\'{n}ski,  and Szemer\'{e}di  \cite{RRS06}.

Let $Bi(n, p)$ be the binomial distribution with parameters $n$ and $p$.
The following lemma on Chernoff bound can be found in Alon and
Spencer \cite{AS08} (page 313, also see \cite{Mi05}).

\begin{lemma}[Chernoff]\label{chernoff}
Suppose $X_1,\ldots, X_n$ are independent random variables taking values in $\{0, 1\}$. Let $X=\sum_{i=1}^n X_i$ and $\mu = \mathbb{E}[X]$. Then, for any $0 < \delta \leq 1$,
$$\mathbb{P}[X \geq (1+\delta) \mu] \le  e^{-\delta^2\mu/3}\mbox{ and } \mathbb{P}[X \leq (1-\delta) \mu] \le  e^{-\delta^2 \mu/2}.$$
In particular, when $X \sim Bi(n,p)$ and $\lambda<\frac{3}{2}np$,  then
\[
\mathbb{P}(|X-np|\geq \lambda)\leq e^{-\Omega(\lambda^2/np)}.
\]
\end{lemma}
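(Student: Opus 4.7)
The plan is to prove this concentration inequality via the classical exponential moment (Bernstein) method. For any $t > 0$, Markov's inequality applied to the nonnegative random variable $e^{tX}$ gives $\mathbb{P}[X \geq a] \leq e^{-ta}\,\mathbb{E}[e^{tX}]$. By independence of the $X_i$, the moment generating function factorizes as $\mathbb{E}[e^{tX}] = \prod_{i=1}^{n} \mathbb{E}[e^{tX_i}]$. Writing $p_i := \mathbb{P}[X_i = 1]$, each factor satisfies $\mathbb{E}[e^{tX_i}] = 1 + p_i(e^t - 1) \leq \exp\bigl(p_i(e^t - 1)\bigr)$ via the inequality $1 + x \leq e^x$, so $\mathbb{E}[e^{tX}] \leq \exp\bigl((e^t - 1)\mu\bigr)$.

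For the upper tail, I would set $a = (1+\delta)\mu$ and optimize by choosing $t = \ln(1+\delta)$, which yields the standard bound $\mathbb{P}[X \geq (1+\delta)\mu] \leq \bigl(e^{\delta}/(1+\delta)^{1+\delta}\bigr)^{\mu}$. To pass from this to the stated form $e^{-\delta^2 \mu / 3}$, I would verify the elementary one-variable inequality $(1+\delta)\ln(1+\delta) - \delta \geq \delta^2/3$ on $0 < \delta \leq 1$ by checking the endpoint value at $\delta = 0$ and confirming non-negativity of the derivative on the interval. The lower tail is symmetric: take $t = \ln(1-\delta) < 0$, apply Markov to $e^{-tX} \geq e^{-ta}$ at $a = (1-\delta)\mu$, and use the slightly stronger inequality $(1-\delta)\ln(1-\delta) + \delta \geq \delta^2/2$ on $[0,1]$ to obtain the sharper constant $1/2$ in the exponent.

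For the final ``in particular'' clause with $X \sim Bi(n,p)$ and $\mu = np$, I would set $\delta := \lambda/(np)$, which lies in $(0, 3/2)$ by the hypothesis $\lambda < \tfrac{3}{2}np$. Combining the two one-sided bounds and keeping the weaker constant $1/3$ in the exponent gives $\mathbb{P}[|X - np| \geq \lambda] \leq 2\exp\bigl(-\lambda^2/(3np)\bigr)$, which is $e^{-\Omega(\lambda^2/np)}$. The only genuinely nontrivial work in the entire argument is verifying the two elementary calculus inequalities used to simplify the exponent; beyond that, no combinatorial or hypergraph-specific content is needed, and the whole proof rests on Markov's inequality applied to $e^{tX}$ together with independence of the $X_i$.
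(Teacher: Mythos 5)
Your proof is a correct instance of the standard exponential-moment (Cram\'er/Chernoff) argument: Markov's inequality applied to $e^{tX}$, factorization by independence, the bound $1+x\le e^x$, optimization over $t$, and the two elementary inequalities $(1+\delta)\ln(1+\delta)-\delta\ge \delta^2/3$ and $(1-\delta)\ln(1-\delta)+\delta\ge \delta^2/2$ on $(0,1]$. The paper itself gives no proof of this lemma; it simply cites Alon--Spencer and Mitzenmacher--Upfal, so there is nothing in the paper to compare against, and what you wrote is exactly the textbook derivation those references give.

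One small gap worth flagging is the final ``in particular'' step. Setting $\delta=\lambda/(np)$, the hypothesis $\lambda<\tfrac{3}{2}np$ only guarantees $\delta\in(0,3/2)$, whereas the two displayed bounds were proved only for $0<\delta\le 1$. Your sentence ``combining the two one-sided bounds'' silently uses them on the wider range. This is easy to repair, but it should be said explicitly: for $\delta\ge 1$ the lower-tail event $X\le(1-\delta)np$ is empty (so its probability is $0$), and for the upper tail one should cite the unrestricted Chernoff form $\mathbb{P}[X\ge(1+\delta)\mu]\le \exp\bigl(-\delta^2\mu/(2+\delta)\bigr)$, which for $\delta<3/2$ still gives $\exp(-\Omega(\delta^2\mu))=\exp(-\Omega(\lambda^2/np))$. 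With that caveat filled in, the argument is complete and matches the standard proof the paper is implicitly invoking.
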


Our objective in this section is to prove a single absorbing lemma for $k$-graphs with large  $l$-degrees for the entire range $k/2<l<k$.  Thus, given
positive integers $k,l$ such that $k/2<l<k$, we consider positive integers $a,h$
satisfying $h\le l$ and $al\ge a(k-l)+(k-h)$.
Note that  when $l>k/2$, $a=k-l$ and $h=l$ always satisfy these requirements.

We will frequently use the following fact:
For integers $0\le l'<l<k$ and any $k$-graph $H$, if $\delta_l(H)\geq c{n-l\choose k-l}$ for some $0\leq c\leq 1$,
then $\delta_{l'}(H)\geq c{n-l'\choose k-l'}$.
(This is because $\delta_{l'}(H)\ge {n-l'\choose l-l'}{n-l\choose k-l}/{k-l'\choose l-l'}$.)

\begin{lemma}\label{Absorb-lem}
	Let $k, l$ be integers such that $k\ge 3$ and $k/2<l<k$, and  let $c>0$ be a constant such that $c<1/k!$. Then there exist $\rho>0$ and $c'>0$  such that
       $0< \rho\ll c'\ll c$ and the following holds for all sufficiently large integers $n$:

	Let $a,h$ be positive integers such that $h\le l$, $a\le k-l$, and $al\ge a(k-l)+(k-h)$.
	If $H$ is a $k$-graph of order $n$ and
	$\delta_{l}(H)\geq c\binom{n-l}{k-l}$,
	then there exists a matching $M$ in $H$
	such that $|M|\le 2k\rho n$ and, for any subset $S\subseteq V(H)$ with $|S|\le c'\rho n$,
     $H[V(M)\cup S]$ has a matching covering all but at most $al+h-1$ vertices.
\end{lemma}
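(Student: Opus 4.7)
The proof is a direct application of the absorbing method of R\"odl, Ruci\'nski, and Szemer\'edi, tailored to the parameters $(a,h)$. For each $k$-set $T\subseteq V(H)$, call a tuple $(e_1,\ldots,e_a)$ of pairwise disjoint edges in $H$, all disjoint from $T$, an \emph{$(a,h)$-absorber for $T$} if $H[T\cup\bigcup_i e_i]$ admits a perfect matching (necessarily of size $a+1$). The plan is to build $M$ by randomly sparsifying $E(H)$ so that $M$ is a matching of size at most $2k\rho n$ containing an absorber for every $k$-set $T$, and then to absorb any given small $S$ block by block.

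\textbf{Counting absorbers.} I claim every $k$-set $T$ has at least $c_1 n^{ak}$ absorbers for some $c_1=c_1(c,k,l)>0$. The hypothesis $\delta_l(H)\ge c\binom{n-l}{k-l}$ yields $\delta_{l'}(H)\ge c\binom{n-l'}{k-l'}$ for every $l'\le l$ by a standard double-counting argument. Partition $T$ into $a+1$ parts $T_0,T_1,\ldots,T_a$ with $|T_0|=h$ and $|T_i|\le l$ for $i\ge 1$: since $k-h\le al$ by the hypothesis $al\ge a(k-l)+(k-h)$, the $k-h$ vertices of $T\setminus T_0$ can be split into $a$ pieces each of size at most $l$. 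Greedily extend each $T_i$ to a $k$-edge $f_i\in E(H)$ using fresh vertices, obtaining $\Omega(n^{k-|T_i|})$ choices per step from the codegree bound. In a second round, construct the absorber edges $e_1,\ldots,e_a$ so that each $e_j\supseteq f_j\setminus T_j$ and so that the fresh vertices of the $e_j$ collectively equal $f_0\setminus T_0$, again using the codegree bound. Multiplying the choice counts yields the claimed $\Omega(n^{ak})$ absorbers for $T$.

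\textbf{Random sparsification.} Include each edge of $H$ independently in a random family $\mathcal F$ with probability $p=\rho n^{-(k-1)}$. Applying Lemma~\ref{chernoff} and a union bound over the $\binom nk$ choices of $T$: with high probability $|\mathcal F|\le \rho n$, the family $\mathcal F$ has at most $O(\rho^2 n)$ intersecting edge pairs, and $\mathcal F$ contains at least $\tfrac12 c_1\rho^a n^a$ absorbers for every $T$ (the Chernoff deviation probability per $T$ is $\exp(-\Omega(\rho^a n^a))$, which dominates the union-bound factor $\binom nk$). Remove one edge from each intersecting pair to obtain a matching $M$ with $|M|\le 2k\rho n$; since each removed edge kills $O(n^{(a-1)k})$ absorbers per $T$, the loss is $O(\rho^{a+1}n^a)$ per $T$, so $M$ still contains $\Omega(\rho^a n^a)$ absorbers per $T$.

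\textbf{Absorption and main obstacle.} Given $S\subseteq V(H)$ with $|S|\le c'\rho n$ and $c'\ll\rho^{a-1}$, partition $S$ into $b=\lfloor|S|/k\rfloor$ blocks $T_1,\ldots,T_b$ of size $k$ plus a residue $S_0$ of size less than $k$. Process the blocks in order: for each $j$, pick an absorber for $T_j$ in the current $M$ that is disjoint from the absorbers used for $T_1,\ldots,T_{j-1}$; feasibility follows from the abundance argument, since only $O(c'\rho n\cdot n^{(a-1)k})$ absorbers per $T$ have been killed. Swap the $a$ absorber edges for the perfect matching of $H[T_j\cup V(\text{absorber})]$. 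After all $b$ swaps, the modified matching covers $V(M)\cup(S\setminus S_0)$. The hypothesis $a(2l-k)\ge k-h$ combined with $a\ge 1$ gives $al+h\ge k$, hence $|S_0|\le k-1\le al+h-1$, as required. The most delicate step is the absorber count: orchestrating the two greedy rounds so that the ``new'' matching $\{f_0,\ldots,f_a\}$ and the ``old'' matching $\{e_1,\ldots,e_a\}$ meet on exactly the right set of $ak$ non-$T$ vertices requires careful choice of fresh vertices at each step, using the codegree bound and an averaging argument to ensure compatibility.
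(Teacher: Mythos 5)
Your plan diverges from the paper's in two structural ways: you build absorbers indexed by $k$-sets $T$ rather than by $(al+h)$-sets $R$, and you sparsify by sampling individual edges rather than $ak$-vertex sets. The paper defines an $R$-absorbing set to be an $ak$-set $Q$ that is the vertex set of a matching in $H$ and satisfies $\nu(H[R\cup Q])\ge a+1$; the key point is that this only requires a matching of size $a+1$ in $H[R\cup Q]$, not a perfect one, so there is slack of $al+h-k\ge 0$ vertices. That slack is precisely what makes the greedy construction close: one first extends the parts $R_1,\dots,R_{a+1}$ of $R$ to edges by freely chosen sets $T_1,\dots,T_{a+1}$, then observes that $\bigl|\bigcup_i T_i\bigr|=a(k-l)+(k-h)\le al$ and hence can be partitioned into $a$ pieces of size $\le l$, each of which is then extended to an edge $P_i$ using \emph{fresh} vertices. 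Both rounds only ever impose a low-degree condition on a set and let the complementary vertices be free, so every choice count is $\Omega(n^{\text{free}})$.

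Your $T$-absorber, by contrast, demands a \emph{perfect} matching of $H[T\cup\bigcup_i e_i]$, since $|T\cup\bigcup_i e_i|=(a+1)k$ exactly. This forces the ``new'' edges $f_0,\dots,f_a$ and the ``old'' edges $e_1,\dots,e_a$ to occupy the same $ak$ non-$T$ vertices, and that is where the construction you sketch breaks down. Whichever round you do first, the second round requires a \emph{specific} set (either $f_j\setminus T_j$ together with a prescribed block $P_j\subseteq f_0\setminus T_0$, or $T_i$ together with a prescribed $(k-|T_i|)$-subset of $\bigcup e_j$) to be an edge of $H$; the degree hypothesis gives you many neighbours of a small set but never lets you hit a prescribed one. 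Concretely, after choosing $f_0,\dots,f_a$ the set $\bigcup_i(f_i\setminus T_i)$ has $ak>al$ vertices, so it cannot be split into $a$ parts of size $\le l$ and then extended freely as in the paper — the extensions would have to land back inside the same set, which the codegree bound does not control. This is exactly the ``delicate step'' you flag at the end, and it is not a matter of bookkeeping: with this absorber definition no averaging argument will recover the $\Omega(n^{ak})$ count, because the constraint is a membership constraint on a bounded family of candidate edges.

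Two secondary issues. First, because you sample edges independently, the indicator variables of distinct $a$-tuples of edges being absorbers share edges and are positively correlated, so Lemma~\ref{chernoff} does not apply as written to the absorber count; one would need Janson's inequality (or to sample $ak$-sets as the paper does, which makes the indicators genuinely independent). Second, your constraint $c'\ll\rho^{a-1}$ is incompatible with the lemma's quantifier order $\rho\ll c'$ whenever $a\ge 2$; the correct bookkeeping (each spent edge of $M$ kills only $O((\rho n)^{a-1})$ tuples with all edges in $M$, against $\Omega((\rho n)^a)$ available) needs only $c'\ll 1$, so this is fixable, but as stated it is wrong. The cleanest repair is to switch to the paper's $(al+h)$-set absorbers and random $ak$-sets; your absorption loop then terminates with residue at most $al+h-1$ rather than $k-1$, which is exactly what the lemma asserts.
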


\pf
	For $R\in \binom{V(H)}{al+h}$ and $Q\in \binom{V(H)}{ak}$,
	we say that $Q$ is \emph{$R$-absorbing}
	if  $\nu(H[Q\cup R])\ge a+1$ and $Q$ is the vertex set of a matching in $H$.
        (This requires $al+h\ge k$, guaranteed by our choice of $a$ and $h$.)
	Let $\mathcal{L}(R)$ denote the collection of all $R$-absorbing sets in $H$. We claim that

	\begin{itemize}
        \item [(1)] there exists $c'>0$ (dependent on $c,k$) such that $|\mathcal{L}(R)|\geq c'n^{ak}$ for every $R\in {V(H)\choose al+h}$.
        \end{itemize}
	To prove (1), let $R\in {V(H)\choose al+h}$
        and we wish to extend $R$ to a matching of size $a+1$ by adding a set of size $(a+1)k-(al+h)=a(k-l)+(k-h)$.	
       Partition $R$ into $a+1$ pairwise disjoint subsets $R_1, \dots , R_{a+1}$,
	with $|R_i|=l$ for $i\in [a]$, and $|R_{a+1}|=h$. Next we choose $(k-l)$-sets $T_s$ for $s\in [a]$ and a $(k-h)$-set $T_{a+1}$ such that $\{R_s\cup T_s:
       s\in [a+1]\}$ form a matching in $H$.
	
      For $j\in [a]$, since $d_H (R_{j})\ge \delta_l(H)\ge
      c\binom{n-l}{k-l}$, we have, for large $n$,
	\[
	 \left|N_{H-R-\bigcup_{s=1}^{j-1}T_s}(R_{j+1})\right| \ge 	c{n-l\choose k-l}-((al+h)+(k-l)j){(n-l)-1\choose (k-l)-1}>\frac{c}{2}{n-l\choose k-l};
	\]
       thus, we have more than $\frac{c}{2}{n-l\choose k-l}$ choices for each $T_j$ with $j\in [a]$.
	Similarly,  since $d_H(R_{a+1})\ge c\binom{n-h}{k-h}$ as $h\le l$, we have
	\[
	\left|N_{H-R-\bigcup_{s=1}^{a}T_s}(R_{a+1})\right| \ge	c{n-h\choose k-h}-((al+h)+(k-l)a){(n-h)-1\choose (k-h)-1}>\frac{c}{2}{n-h\choose k-h};
	\]
	hence, we have more than $\frac{c}{2}{n-h\choose k-h}$ choices for  $T_{a+1}$.

        Fix an arbitrary choice of $T_i\in N_{H-R-\bigcup_{s=1}^{i-1}T_s}(R_i)$ for $i\in [ a+1]$,
        and let $T=\bigcup_{i=1}^{a+1} T_i$. Next, we form an $R$-absorbing set  $Q$ by extending the set $T$ to a matching of size $a$.
        We partition $T$ into subsets $T'_1,\dots, T'_a$ such
        that $1\le |T_i'|\le l$ for $i\in [a]$. Such a partition exists since $|T|=a(k-l)+(k-h)\le al$ (by assumption).
        Similar to the arguments in the previous paragraph, we can show that there exist $P_i\in N_{H-R-T-\bigcup_{s=1}^{i-1}P_s}(T_i')$ for $i\in [a]$, such that
	\[
	 \left|N_{H-R-T-\bigcup_{s=1}^{i-1}P_s}(T_i')\right| 	> \frac{c}{2}\binom{n-|T_i'|}{k-|T_i'|}.
	\]
        This means that there are more than $\frac{c}{2}\binom{n-|T_i'|}{k-|T_i'|}$ choices for each $P_i$ with $i\in [a]$.
	Let $Q=T\cup \left(\bigcup_{i=1}^a P_i\right)$.
	Then $Q$ is the vertex set of a matching of size $a$ in $H$. Hence $Q$ is an $R$-absorbing set.

        Note that each such $ak$-set $Q$ can be produced at most $(ak)!$ times by the above process, and recall that $\sum_{i=1}^a |T_i'|=a(k-l)+(k-h)$.
	Hence, for large $n$ (compared with $k$),  we have	
	\begin{align*}
		|\mathcal{L}(R)|
		&>((ak)!)^{-1}\left(\frac{c}{2}{n-l\choose k-l}\right)^{a}
			\left(\frac{c}{2}{n-h\choose k-h}\right)
			\prod_{i=1}^a\left(\frac{c}{2}{n-|T_i'|\choose k-|T_i'|}\right)\\
		&>(2(ak)!)^{-1}\left(\frac{c}{2}\right)^{2a+1}
			\left(\frac{n^{a(k-l)}}{((k-l)!)^a}\right)
			\left(\frac{n^{k-h}}{(k-h)!}\right)
			\left(\frac{n^{ak-(a(k-l)+(k-h))}}{(ak-(a(k-l)+(k-h)))!}\right)\\
		&>c'n^{ak},
	\end{align*}
	by choosing $c'<(2(ak)!)^{-1}\left(c/2\right)^{2a+1}\left(((k-l)!)^a(k-h)!(al+h-k)!\right)^{-1}$.
	{$\Box$}

\bigskip

        Choose $\rho <c'/(2a^2k^2)$. We form  a  family $\F\subseteq {V(H)\choose ak}$
        by choosing each
	member of ${V(H)\choose ak}$ independently at random with probability
		$$p=\frac{\rho n}{{n\choose ak}}.$$
     Then
        \begin{itemize}
        \item [(2)] with probability $1/2-o(1)$, all of the following hold:
        \begin{itemize}
        \item [(2a)] $|\F|\le 2\rho n$,
        \item [(2b)] $|\mathcal{L}(R)\cap \F|\ge 2c'\rho n$, and
        \item [(2c)] $\F$ contains less than $c'\rho n$ intersecting pairs.
        \end{itemize}
        \end{itemize}
        	Clearly,  $\E(|\F|)=\rho n$ and, by (1),  $\E(|\mathcal{L}(R)\cap \F|)> c'n^{ak}p>  4c'\rho n$ (as $a\ge 1$ and $k\ge 3$).  So by Lemma~\ref{chernoff},  with
	probability $1-o(1)$,
	$$	|\F|\leq 2\rho n,$$
	and, for each fixed $(al+h)$-set $R$, with probability at least
	$1-e^{-\Omega(\rho n)}$, $\F$ satisfies
	$$	|\mathcal{L}(R)\cap \F|\geq 2c' \rho n.$$
        Hence given $n$ sufficiently large,
	it follows from  union bound that,
	 with probability $1-o(1)$,
        (2a) and (2b) hold for all $(al+h)$-sets  $R$.

  	Furthermore,  the expected number of intersecting pairs in $\F$ is at most
	\[
		{n\choose ak}{ak\choose 1}{n-1\choose ak-1}p^2= a^2 k^2\rho^2 n<c'\rho n/2.
	\]
	Thus, using Markov's inequality,
	we derive that with probability at least 1/2, $\F$ contains less than $c'\rho n$ intersecting pairs of $ak$-sets.
        Hence, by a union bound, (2a), (2b), (2c) hold with probability $1/2-o(1)$. $\Box$

\medskip

	Let $\F'$ denote the family obtained from $\F$ by deleting one $ak$-set
	from each intersecting pair of sets in  $\F$ and removing all $ak$-sets that are not the vertex set of a matching in $H$ (which are not in $\mathcal{L}(R)$
        for any $(al+h)$-set $R$).
	Then  $\F'$ consists of pairwise disjoint vertex sets of matchings of size $a$ in $H$,
	and for all $(al+h)$-sets $R$,
	\[
		|\mathcal{L}(R)\cap \F'|\geq 2c'\rho n-c'\rho n
		\ge c'\rho n.
			\]

	For each $F\in \F'$, let $M_F$ be a matching in $H$ with $V(M_F)=F$. Then $M=\bigcup_{F\in \F'}M_F$ is a
	perfect matching in $H[V(\F')]$, and $|M|\le a|\F|\le  k|\F|\le  2k\rho n$.
       	It remains to show that $M$ is the desired matching.

	Let $S$ be an arbitrary subset of $V(H)\setminus V(M)$ with $|S|\leq c'\rho n$.
        We use $M$ to absorb $(al+h)$-sets  iteratively, starting with an arbitrary
        $(al+h)$-subset of $S$.
        Let $S_0:=S$ and let $R_0\subseteq S_0$ with $|R_0|=al+h$.
        Since $|\mathcal{L}(R_0)\cap \F'|\ge c'\rho n$, we can find  $Q_0\in \F'$
        such that $H[R_0\cup Q_0]$ has a matching $M_0$ with $|M_0|=a+1$. Let
        $S_1=(S_0\cup Q_0)\setminus V(M_0)$; then $|S_1|=|S_0|-k$.

        Let $t\ge 0$ be maximal such that we have sets $S_0,\ldots,
        S_{t}$, $(al+h)$-sets $R_0, \ldots, R_t$,   disjoint $ak$-sets
       $Q_0, \ldots, Q_t$, pairwise disjoint matchings $M_0, \ldots, M_t$ in $H$, such that $|S_t|\ge al+h$ and, for $i\in [t]$,  $Q_i\in \F'$ is an $R_i$-absorbing set,
       $M_i$ is a matching in $H[R_i\cup Q_i]$,  $|M_i|=a+1$, and
       $S_{i}=(S_{i-1}\cup Q_{i-1})\setminus V(M_{i-1})$.
       So $|S_i|=|S_{i-1}|-k$ for $i\in [t]$.

       Let $S_{t+1}=(S_t\cup Q_t)\setminus V(M_t)$. Then  $|S_{t+1}|<al+h$. For otherwise, let $R_{t+1}$ be an $(al+h)$-subset of $S_{t+1}$.
       Since $|\mathcal{L}(R_{t+1})\cap \F'|\ge c'\rho n$ and
       $t+1\le |S|/k+1\leq c'\rho n-1$,
       there exists $Q_{t+1}\in \F'\setminus\{Q_0,\dots, Q_t\}$
       such that $H[R_{t+1}\cup Q_{t+1}]$ has a matching $M_{t+1}$
       with $|M_{t+1}|=a+1$.  We obtain a contradiction to the maximality of $t$.
       Thus, $M$ is the desired matching. \qed

%%%%%%%%%%%%%%%%%%%%%%%~Section 4~%%%%%%%%%%%%%%%%%%%%%

\section{Fractional perfect matchings}

A {\it fractional matching} in a $k$-graph
  $H$ is a function $w: E\rightarrow [0,1]$ such that for any
  $v\in V(H)$, $\sum_{\{e\in E: v\in e\}}w(e)\le 1$.
A fractional matching is called {\it perfect} if $\sum_{e\in E}w(e)=|V(H)|/k$.
 Any set $I \subseteq V(H)$ that contains no edge of $H$ is called an
\emph{independent set}  in $H$. We use $\alpha(H)$ to denote the size of a largest independent set in the hypergraph $H$.

In this section, we show that for any $\varepsilon >0$ and $\rho>0$ with $\rho\ll \varepsilon$, if an $n$-vertex $k$-graph $H$ has
$\alpha(H)\le (1-1/k-\varepsilon/5)n$
and $\delta_{l}(H)> {n-l\choose k-l}-{n-l-m\choose k-l}-\rho n^{k-l}$,
  then $H$ admits a
 prefect fractional matching.
The reason for the term $-\rho
n^{k-l}$ is because we will consider a hypergraph obtained from the
hypergraph in Theorem~\ref{main-thm}
by removing the vertices of an absorbing matching (see
Lemma~\ref{Absorb-lem}).   When $\alpha(H)> (1-1/k-\varepsilon/5)n$ and
$\delta_{l}(H)> {n-l\choose k-l}-{n-l-m\choose
  k-l}-\rho n^{k-l}$,  $H$ is close to $H_k^{k-l}(U,W)$ as will be shown  in
Section 5 (see Lemma~\ref{4.1}), and, hence, can  be taken care of
by
results in Section 2.

To find a perfect fractional
 matching in a $k$-graph $H$ that is  not close to
$H_k^{k-l}(U,W)$ for any partition $U,W$ of $V(H)$ with $|U|=n-m$ and $|W|=m$,
we need to consider matchings in the ``link'' graph of an
$l$-set in a related $k$-graph, which is
a $(k-l)$-graph. This is related to the following
                  well known conjecture of Erd\H{o}s \cite{Er65} on matchings in
                  uniform hypegraphs: If $F$ is a $k$-graph on $n$ vertices and $\nu(F)=s$, then
                  $e(F)\le \max\left\{{n\choose k}-{n-s\choose k},
                  {ks+1\choose k}\right\}$.
                  Frankl \cite{Fr13} proved
                  that if $n\geq (2s+1)k-s$ then $e(F)\leq {n\choose k}-{n-s\choose k}$
                 with equality if and only if $H$ is isomorphic to $H_k^k(U,W)$, with $|W|=s$ and $|U|=n-s$.
                Very recently, Frankl and Kupavskii \cite{FK18} further improved the lower bound to $n\ge (5k/3-2/3)s$ for large $s$.

                  For our purpose, we need a stability version of Frankl's  result
                  mentioned above in order to deal with the case when $e(H)>{n\choose k}-{n-s\choose k}-\rho n^k$ (due to the removal of the vertex set of an absorbing matching).

Ellis, Keller,
                  and Lifshitz \cite{EKL18}  recently proved the following
                  stability version of Frankl's result, which we state as follows using our notation:
For any $s\in \mathbb{N}$, $\eta>0$, and $\varepsilon >0$, there exists  $\delta =\delta(s,\eta,\varepsilon)>0$ such that the following holds.
Let $n,k\in \mathbb{N}$ with $k\le (\frac{1}{2s+1}-\eta)n$. Suppose $H\subseteq {[n]\choose k}$ with $\nu(H)\le s$ and
$e(H)\ge {n\choose k}-{n-s\choose k}-\delta{n-s\choose k-1}$. Then there exists $W\in {[n]\choose s}$  such that
$|E(H)\setminus E(H_{k}^{k}(U,W))| <\varepsilon{n-s\choose k}$.

The lower bound on $e(H)$ in the above result of Ellis, Keller,
                  and Lifshitz is too large for our
                  purpose.
                  However, using LP duality we only need to consider ``stable''
                  hypergraphs and for such hypergraphs we can prove a better bound on $e(H)$.

For subsets $e=\{u_1,\ldots, u_k\}, f=\{v_1,\ldots,v_k\}\subseteq [n]$ with $u_i<u_{i+1}$ and $v_i<v_{i+1}$ for $i\in [k-1]$, we write $e\le f$
if $u_i\le v_i$ for all $i\in [k]$. A hypergraph $H\subseteq {[n]\choose k}$ is said to be {\it stable} if, for $e,f\in {[n]\choose k}$ with $e\le f$,
$f\in E(H)$ implies $e\in E(H)$.
Our proof of a stability version of Frankl's theorem for stable hypergraphs uses the same
                  ideas as in \cite{Fr13}, including the following result from \cite{Fr13} which  is an extension of Katona's
Intersection Shadow Theorem \cite{Ka64}.

\begin{lemma}\label{katona}
	Let $\F\subseteq \binom{[n]}{k}$ with $\nu(\F)=s$. Then $s|\partial
        \F|\ge |\F|$, where $\partial \F$ is the shadow of $\F$, defined by
	\[
		\partial \F := \left\{G\in \binom{[n]}{k-1}:  G\subseteq F \mbox{ for some } F\in \F \right\}.
	\]
\end{lemma}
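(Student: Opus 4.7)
This is Frankl's extension \cite{Fr13} of Katona's intersecting-shadow theorem (which is the case $s=1$). My plan follows the standard shifting-plus-induction strategy.

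First, I would reduce to the shifted (stable) case by iterating the $(i,j)$-shift operator $S_{ij}$ for $1\le i<j\le n$: this preserves $|\F|$, does not increase $|\partial\F|$, and does not increase $\nu(\F)$, so it suffices to prove the lemma when $\F$ is shifted.

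Next, I would induct on $n$. Split $\F=\F_{\bar n}\cup\F_n$ according to whether the largest element $n$ belongs to the set, and let $\F_n'=\{F\setminus\{n\}:F\in\F_n\}\subseteq\binom{[n-1]}{k-1}$. The key structural consequence of stability is the containment $\F_n'\subseteq\partial\F_{\bar n}$: for $F\in\F_n$, shifting $n$ down to any smaller element not in $F\setminus\{n\}$ yields a member of $\F_{\bar n}$ whose $(k-1)$-shadow contains $F\setminus\{n\}$. This gives the clean decomposition $|\partial\F|=|\partial\F_{\bar n}|+|\partial\F_n'|$ alongside $|\F|=|\F_{\bar n}|+|\F_n'|$ with $|\F_n'|\le|\partial\F_{\bar n}|$. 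The inductive hypothesis applied to $\F_{\bar n}$ on $n-1$ vertices yields $s|\partial\F_{\bar n}|\ge|\F_{\bar n}|$, and a parallel inductive bound on $\F_n'$ (as a $(k-1)$-graph) would give $s|\partial\F_n'|\ge|\F_n'|$; summing then delivers $s|\partial\F|\ge|\F|$.

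The main obstacle is that $\nu(\F_n')$ need not be bounded by $s$, so the second inductive bound may fail to apply directly; for instance, $\F=\binom{[2k-1]}{k}$ has $\nu(\F)=1$ yet $\nu(\F_n')=2$. To handle this, I would use the containment $\F_n'\subseteq\partial\F_{\bar n}$ more forcefully: when $\nu(\F_n')>s$, the family $\F_{\bar n}$ is automatically dense (since $\partial\F_{\bar n}\supseteq\F_n'$ is large), so a Kruskal--Katona-type estimate on $|\partial\F_{\bar n}|$ relative to $|\F_{\bar n}|$, combined with $|\F_n'|\le|\partial\F_{\bar n}|$, suffices to close the inequality without invoking induction on $\F_n'$. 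A careful case analysis based on the relative sizes of $|\F_n'|$, $|\partial\F_{\bar n}|$, and $|\partial\F_n'|$ (as in \cite{Fr13}) then completes the argument.
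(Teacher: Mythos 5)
The paper does not prove this lemma: it is Frankl's extension of Katona's intersecting-shadow theorem, stated and then cited directly from \cite{Fr13}, so there is no in-paper argument for your proposal to be compared against. Judged on its own terms, your outline has the right skeleton, and the obstacle you flag is genuine, but the repair you sketch is not the right one and the argument as written does not close.

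The correct dichotomy is not ``when $\nu(\F_n')>s$ then $\F_{\bar n}$ is dense''; it is a clean split on $n$ versus $(s+1)k$. When $n\ge (s+1)k$ and $\F$ is shifted, in fact $\nu(\F_n')\le s$, so your second inductive call is legitimate after all: if $E_1,\dots,E_{s+1}\in\F_n'$ were pairwise disjoint, then $[n-1]\setminus\bigcup_i E_i$ has at least $n-1-(s+1)(k-1)\ge s$ elements, so one may pick distinct $x_1,\dots,x_s$ there; by shiftedness $E_i\cup\{x_i\}\in\F$ for $i\le s$, while $E_{s+1}\cup\{n\}\in\F$, giving $s+1$ pairwise disjoint edges of $\F$ and contradicting $\nu(\F)\le s$. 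Your counterexample $\binom{[2k-1]}{k}$ lives precisely in the complementary range $n\le (s+1)k-1$, but there no induction is needed at all: double-counting pairs $(F,G)$ with $F\in\F$, $G\in\partial\F$, $G\subseteq F$ gives $k|\F|\le (n-k+1)|\partial\F|$ for every $\F\subseteq\binom{[n]}{k}$, and $n\le(s+1)k-1$ is exactly the condition $sk\ge n-k+1$, so $s|\partial\F|\ge|\F|$ holds outright. This trivial estimate is the ``Kruskal--Katona-type'' bound your instinct reached for, but it should be applied to $\F$ itself in the small-$n$ regime, not used to exploit alleged denseness of $\F_{\bar n}$. With these two observations in place your double induction (on $n$ and on uniformity) does go through.
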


We can now state and prove the following stability version of Frankl's
result on matchings for stable hypergraphs. Note that we allow $k=1$.
\begin{lemma}\label{stafrankl}
	Let
	$k$ be a positive integer, and let $c$ and $\xi$ be constants such that $0<c<1/(2k)$ and $0<\xi\le (1+18(k-1)!/c)^{-2}$.
	Let $n,m$ be positive integers such that $n$ is sufficiently large and
	$cn\leq m\leq n/(2k)$.
	Let $H$ be a $k$-graph with vertex set $[n]$ such that
	$H$ is stable  and $\nu(H)\leq m$.
	If $e(H)>{n\choose k}-{n-m\choose k}-\xi n^{k}$, then $H$ is $\sqrt{\xi}$-close to $H_k^k([n]\setminus [m],[m])$.
\end{lemma}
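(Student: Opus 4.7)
Write $A:=\binom{[n]}{k}\setminus\binom{[n]\setminus[m]}{k}$ for the edge set of $H_k^k([n]\setminus[m],[m])$, and $\F_0:=E(H)\cap\binom{[n]\setminus[m]}{k}$ for the ``bad'' edges of $H$ disjoint from $[m]$. Since $E(H)$ is the disjoint union of $E(H)\cap A$ and $\F_0$,
\[
|A\setminus E(H)| \;=\; |A|-e(H)+|\F_0| \;<\; \xi n^k+|\F_0|,
\]
using the hypothesis on $e(H)$. Therefore the claim that $H$ is $\sqrt\xi$-close to $H_k^k([n]\setminus[m],[m])$ reduces to showing $|\F_0|\le(\sqrt\xi-\xi)n^k$.

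Stability of $H$ passes to $\F_0$, so $\F_0$ is itself a stable $k$-graph on $[n]\setminus[m]$ with $\nu(\F_0)\le\nu(H)\le m$. The key consequence of stability I shall use is the following: for every $(k-1)$-set $g\in\partial\F_0$ and every $i\in[m]$, the $k$-set $g\cup\{i\}$ is an edge of $H$. Indeed, if $e\in\F_0$ contains $g$, then $g\cup\{i\}\le e$ in the componentwise order on sorted tuples (because $i\le m<\min e$), and stability of $H$ forces $g\cup\{i\}\in E(H)$.

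Now suppose for contradiction that $|\F_0|>(\sqrt\xi-\xi)n^k$. I will exhibit a matching of size $m+1$ in $H$, contradicting $\nu(H)\le m$. Fix a maximum matching $M^*=\{f_1,\dots,f_t\}\subseteq\F_0$ with $t=\nu(\F_0)\le m$; by Lemma~\ref{katona}, $|\partial\F_0|\ge|\F_0|/t\ge|\F_0|/m$. I then extend $M^*$ by $m+1-t$ further edges of the form $\{i_j\}\cup g_j$ ($1\le j\le m+1-t$), with $i_1,\dots,i_{m+1-t}\in[m]$ distinct and $g_1,\dots,g_{m+1-t}\in\partial\F_0$ pairwise disjoint and disjoint from $V(M^*)$. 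Each such edge lies in $E(H)$ by the stability observation, giving a matching of size $t+(m+1-t)=m+1$ in $H$.

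The main obstacle is producing the disjoint shadow sets $g_1,\dots,g_{m+1-t}$, equivalently showing that $\partial\F_0$ restricted to $[n]\setminus[m]\setminus V(M^*)$ has matching number at least $m+1-t$. A naive greedy using only the bound $|\partial\F_0|\ge|\F_0|/m$ is off by a constant factor; the right approach is to iterate Lemma~\ref{katona} on the stable $(k-1)$-graph $\partial\F_0$, or equivalently apply Frankl's matching bound to it, to obtain a matching of the required size on the restricted vertex set. The precise hypotheses $m\ge cn$ and $\xi\le(1+18(k-1)!/c)^{-2}$ are calibrated so that this matching-existence step just succeeds whenever $|\F_0|>(\sqrt\xi-\xi)n^k$: the constant $18(k-1)!/c$ encodes both the Kruskal--Katona-type shadow losses and the union bound over the $m$ positions in $[m]$.
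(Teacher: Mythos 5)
Your reduction to bounding $|\F_0|=e(H-[m])$ and your stability observation (that $g\cup\{i\}\in E(H)$ for $g\in\partial\F_0$, $i\in[m]$) are both correct, and they do mirror the opening moves of the paper's proof. But the matching-extension step has a genuine structural gap, not merely a missing calculation.

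You fix a \emph{maximum} matching $M^*\subseteq\F_0$ of size $t=\nu(\F_0)$ and then need $m+1-t$ pairwise disjoint members of $\partial\F_0$ that avoid $V(M^*)$. This can be impossible even when $|\F_0|$ is very large: take $\F_0=\binom{[m+1,m+km]}{k}$, which is stable, has $\nu(\F_0)=m$, and has $|\F_0|=\binom{km}{k}=\Theta(n^k)$. Here $V(M^*)=[m+1,m+km]$ and $\partial\F_0\subseteq\binom{V(M^*)}{k-1}$, so $\partial\F_0$ restricted to $[n]\setminus[m]\setminus V(M^*)$ is \emph{empty}. No amount of iterating Katona's lemma or applying Frankl's bound to $\partial\F_0$ produces a matching in an empty hypergraph, and the "constants are calibrated" remark does not address this. (Note also that your contradiction step uses only $|\F_0|>(\sqrt\xi-\xi)n^k$ and never invokes the hypothesis $e(H)>\binom{n}{k}-\binom{n-m}{k}-\xi n^k$, which is the hypothesis that ultimately rules out such configurations by forcing $|A\setminus E(H)|$ to be large.)

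The fix requires the freedom to use \emph{fewer} edges of $\F_0$ and correspondingly more shadow-derived edges, and this is exactly what the paper does, in a more refined form. Rather than passing to $|\F_0|$ directly, the paper uses stability to observe that vertex $m+1$ has maximum degree in $H-[m]$, so $e(H-[m])\le\frac{n-m}{k}\,|\F(\{m+1\})|$ where $\F(\{m+1\})=\{e\in E(H-[m]):m+1\in e\}$; thus one only has to bound $|\F(\{m+1\})|$. Assuming $|\F(\{m+1\})|$ is large, the contradiction is extracted by choosing $t=\lceil(2+1/k)m\rceil$ random disjoint $(k-1)$-sets in $[n]\setminus[m+1]$, forming a bipartite graph against the nested chain $\F_{m+1}\subseteq\cdots\subseteq\F_1$ of link sets, and combining K\"onig's theorem with Katona's lemma and a second-moment computation. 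Crucially, a matching of size $m+1$ in that auxiliary bipartite graph corresponds to \emph{one} edge through $m+1$ (from $\F_0$) together with $m$ edges, each through a distinct vertex of $[m]$, all supported on random $(k-1)$-sets chosen from $[n]\setminus[m+1]$ — so they automatically avoid the problematic concentration you run into. Your plan could perhaps be repaired by always taking a single edge of $\F_0$ and building the rest from shadows, but as written the step you flag as "the main obstacle" is a real hole, not a routine calibration.
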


\pf	Suppose $e(H)> {n\choose k}-{n-m\choose k}-\xi n^{k}$.
When $k=1$, each edge of $H$ consists of a single vertex. In this
case, since $e(H)> m-\xi n\ge m-\sqrt \xi n$ and because $H$ is stable
and $e(H)=\nu(H)\le m$, we have that $H$ is $\sqrt \xi$-close to $H_1^1([n]\setminus [m],[m])$.

Thus, we may assume $k\ge 2$. To show that $H$ is close to $H_k^k([n]\setminus [m],[m])$, we bound $e(H-[m])$ (as edges in $H-[m]$ are not in $H_k^k([n]\setminus [m],[m])$).
	Since $H$ is stable,  the vertex $m+1$ has the maximum degree in $H-[m]$.
	So
	\[
		e(H-[m])\le \frac{(n-m)}{k} |\{e\in E(H-[m]): m+1\in e\}|.
	\]
 	Thus,
	our objective is to bound the size of $\F(\{m+1\}):=\{e\in E(H-[m]): m+1\in e\}$,
	and we use such a bound to prove that
	$H$ is $\sqrt{\xi}$-close to $H_k^k([n]\setminus [m],[m])$.
	Let
	$$\sigma =\frac{2\xi(k-1)!}{c}.$$
         First, we may assume that

	\begin{itemize}
		\item [(1)] $|\F(\{m+1\})|\ge  9k\sigma n^{k-1}$.
	\end{itemize}	
	For, suppose $|\F(\{m+1\})|<9k\sigma n^{k-1}$. Then
	\[
		e(H-[m])\le \frac{(n-m)}{k} |\F(\{m+1\})|<9\sigma n^k.
	\]
	Thus
	\begin{align*}
		& \quad \ \big|E\big(H_k^k([n]\setminus [m],[m])\big)\setminus E(H)\big| \\
		& = e\big(H_k^k([n]\setminus [m],[m])\big)
			-\big(e(H)-e(H-[m])\big) \\
		& < \left({n\choose k}-{n-m\choose k}\right)
			-\left({n\choose k}-{n-m\choose k}-\xi n^{k}-9\sigma n^k\right)\\
		& \le \xi n^{k}+9\cdot \frac{2\xi(k-1)!}{c} n^k \\
		& \le \sqrt{\xi}n^{k},
	\end{align*}
	as $\xi \le (1+18(k-1)!/c)^{-2}$.
	That is, $H$ is $\sqrt{\xi}$-close to $H_k^k([n]\setminus [m],[m])$, and the assertion of the lemma holds. $\Box$
\medskip

        We will derive a contradiction to (1). First, we extend the notation $\F(\{m+1\})$ to all $Q\subseteq [m+1]$, by letting
	\[
		\F(Q) = \{e\in E(H)\ :\ e \cap [m+1]=Q\}.
	\]
	Note that
	$
		|\F(Q)| \le \binom{n-(m+1)}{k-|Q|}=\binom{n-m-1}{k-|Q|}.
	$
        Also note that,  since $H$ is stable, $|\F(\{m+1\})|\ge |\partial \F(\emptyset)|$. So Lemma \ref{katona} gives that
	\[
		m|\F(\{m+1\})|\ge m|\partial \F(\emptyset)|\ge |\F(\emptyset)|.
	\]

        We claim that

\begin{itemize}
\item [(2)] $\left( \sum_{i=1}^{m+1}|\mathcal{F}(\{i\})|\right) + m|\F (\{m+1\}) |> m{n-m\choose k-1}\left(1-\sigma\right)$.
\end{itemize}
	To prove (2), it suffices to show  $|\mathcal{F}(\emptyset)|+\sum_{i=1}^{m+1}|\mathcal{F}(\{i\})| > m{n-m\choose k-1}\left(1-\sigma\right)$.
Note that
\[
                \sum_{Q\subseteq [m+1],|Q|\geq 2}|\mathcal{F}(Q)|\le \sum_{i=2}^k\binom{m+1}{i}\binom{n-(m+1)}{k-i} %\binom{m+1}{2}\binom{n-2}{k-2}
         \]
and

\begin{align*}
		\binom{n}{k} &= \binom{n-(m+1)}{k}+(m+1)\binom{n-(m+1)}{k-1}+\sum_{i=2}^k\binom{m+1}{i}\binom{n-(m+1)}{k-i}\\
                             &=\binom{n-m}{k}+m\binom{n-(m+1)}{k-1}+\sum_{i=2}^k\binom{m+1}{i}\binom{n-(m+1)}{k-i}.
\end{align*}
Thus,
	\begin{eqnarray*}
		& &	|\mathcal{F}(\emptyset)|+\sum_{i=1}^{m+1}|\mathcal{F}(\{i\})|\\
		&=& e(H)-\sum_{Q\subseteq [m+1],|Q|\geq 2}|\mathcal{F}(Q)|\\
		&>& {n\choose k }-{n-m\choose k}-\xi n^{k}-\sum_{i=2}^k\binom{m+1}{i}\binom{n-(m+1)}{k-i} \\
		&=&  m{n-(m+1)\choose k-1}-\xi n^{k}\\
		&>& m{n-m\choose k-1}\left(1-\sigma\right)
			\quad \mbox{(since $cn\leq m\leq n/(2k)$ and
                    $n$ large)}. \quad \Box
	\end{eqnarray*}
	
%\medskip
	Let $t=\lceil(2+1/k)m\rceil$. Since $n\ge 2km$ and $m\ge cn$ (where $n$ is sufficiently large),  	
	$$n-(m+1)\ge 2km-(m+1)=(2+1/(k-1))m(k-1)-1>t(k-1).$$
	Let $\M=\{f_1,\dots ,f_t\}$ be $t$ pairwise disjoint $(k-1)$-subsets of $[n]\setminus [m+1]$
chosen uniformly at random.  Let $\F_i:=\{e\setminus \{i\}:e\in\F(\{i\})\}$ for $i\in [m+1]$.  Then  $\F_{m+1}\subseteq \F_m\subseteq \ldots
\subseteq \F_1$ (since $H$ is stable) and, for each fixed pair $i,j$,
	\[
		\P(f_j\in \F_{i})=\frac{|\F_{i}|}{\binom{n-(m+1)}{k-1}}.
	\]
        Let
	\begin{align*}
		x_i=
		\begin{cases}
			1, & f_i\in \F_{m+1}, \\
			0, & f_i\not\in \F_{m+1},
		\end{cases}
	\end{align*}
	and let $p=\P(x_i=1)$ (which is the same for $i\in [t]$). Now $|\F_{m+1}|=p\binom{n-(m+1)}{k-1}$. So by (1), we have

\begin{itemize}
\item [(3)] $p> 9k\sigma$.
\end{itemize}

Thus, it suffices to derive a contradiction to (3). Note that

\begin{itemize}
\item [(4)] for $1\le i<j\le t$, $\P(x_ix_j=1)\le \left(1+\frac{1}{4k}\right)p^2$.
\end{itemize}
This is because
 	\begin{align*}
		\P(x_ix_j=1)
		& = \P(x_j=1|x_i=1)\P(x_i=1) \\
		& \le \frac{|\F_{m+1}|}{\binom{n-(m+1)-(k-1)}{k-1}}\frac{|\F_{m+1}|}{\binom{n-(m+1)}{k-1}} \\
                & =\frac{\binom{n-(m+1)}{k-1}}{\binom{n-(m+1)-(k-1)}{k-1}} \cdot p^2\\
		& \le \left(1+\frac{1}{4k}\right)p^2,
	\end{align*}
as $n-(m+1)\ge (1-1/(2k))n-1$ and  $n$ is large. $\Box$

\medskip

	Define a bipartite graph $G$ with partition sets $\M$ and $\{\F_1,\dots, \F_{m+1}\}$, where $f_j\in \M$ is adjacent to $\F_i$ if, and only if, $f_j\in\F_i$.
Note that a matching of size $m+1$ in $G$ gives rise to a matching of size $m+1$ in $H$. Thus, $\nu(G)\le m$.
So by a theorem of K\"onig, $G$ has a vertex cover of size $m$, say $T$.
	Let $x=|T\cap \M|$; then $|T\cap \{\F_1,\dots, \F_{m+1}\}|=m-x$.	
        Since $\F_{m+1}\subseteq \F_m\subseteq \cdots \subseteq \F_1$, $d_G(f_j)= m+1$
	for  $f_j\in \F_{m+1}$; so $f_j\in T$ for all  $f_j\in
        \F_{m+1}$. Hence $0\le b\le x \le m$, where $b:=|\M\cap \F_{m+1}|
=\sum_{i=1}^t x_i$. So
	$
		pt=\E (b)\le m\le t/(2+1/k).
	$
	This implies
\begin{itemize}
\item [(5)]  $p\le 1/(2+1/k)<1/2$.	
\end{itemize}

Moreover,
	\[
	 	\sum_{i=1}^{m+1}|\M\cap \F_i|=e(G)\le t(m-x)+x((m+1)-(m-x))=x^2-(t-1)x+mt.
	\]
    Thus, letting $h(x,b):=x^2-(t-1)x+mt+mb$, we have
	\begin{align*}
		\E (h(x,b))
		& \ge \E\left(m|\M\cap \F_{m+1}|+\sum_{i=1}^{m+1} |\M\cap \F_i|\right)\\
		& =mt \frac{|\F_{m+1}|}{\binom{n-(m+1)}{k-1}} + \sum_{i=1}^{m+1}  t \frac{|\F_i|}{\binom{n-(m+1)}{k-1}} \\
		& =\frac{t}{\binom{n-(m+1)}{k-1}}\left( m|\F(\{m+1\})| + \sum_{i=1}^{m+1}|\mathcal{F}(\{i\})| \right)\\
		& > mt(1-\sigma)\quad \mbox{(by (2))}.
	\end{align*}

	Next we obtain an upper bound on $\E (h(x,b))$.
	Using the convexity of $h(x,b)$ (as a function of $x$ over the interval $[b,m]$)
	and the fact that $h(b,b)-h(m,b)=(t-1-m-b)(m-b)\ge 0$, we have 	
	\begin{align*}
		h(x,b) \le \max\{h(b,b), h(m,b)\} = h(b,b)=b^2-(t-1)b+mt+mb.
	\end{align*}
        Thus,
	\begin{align*}
		\E (h(x,b))
		& \le \E(b^2-(t-1)b+mt+mb)  \\
		& = \E\left(\left(\sum_{i=1}^t x_i\right)^2
			-(t-1-m)\left(\sum_{i=1}^t x_i\right)+mt\right) \\
		& \le \left(1+\frac{1}{4k}\right)p^2(t^2-t)+pt-(t-1-m)pt+mt \quad \mbox{(by (4))}.
	\end{align*}
	Hence, combining the above bounds on $\E (h(x,b))$, we have
	\[
		\left(1+\frac{1}{4k}\right)p^2(t^2-t)+pt-(t-1-m)pt+mt>  mt(1-\sigma).
	\]
	Thus,
	\begin{align*}
		\sigma mt
		& > pt \left(t-m-\left(1+\frac{1}{4k}\right)pt-2+\left(1+\frac{1}{4k}\right)p\right) \\
		& > pt\left(\left(1-\left(1+\frac{1}{4k}\right)p\right)t-m-2\right) \\
		& \ge pt\left(\left(\left(1-\frac{1}{2}\left(1+\frac{1}{4k}\right)\right)\left(2+\frac{1}{k}\right)-1\right) m- 2\right) \quad \mbox{(by (5) and the definition of $t$)}\\
		& = pt\left(\frac{2k-1}{8k^2}m- 2\right) \\
		& > ptm/(9k)  \quad \mbox{(since $m\ge cn$ and $n$ is large)}.
	\end{align*}
Therefore,  $p<9k\sigma$, contradicting (3).
Hence $H$ must be $\sqrt{\xi}$-close to $H_k^k([n]\setminus [m],[m])$.
\qed

\medskip

{\it Remark}. In the proof of Lemma~\ref{stafrankl} we require $m\le
n/(2k)$ (e.g., when we define $t$ and $\M$ before (3)). We will see in
Section 6, we can replace it with $n/2-1$ when $k=3$ and $l=1$.

\medskip

For a hypergraph $H$, let $$\nu^*(H)=\max\left\{\sum_{e\in E(H)} w(e): w \mbox{
  is a fractional matching in $H$}\right\}.$$
A {\it fractional vertex cover} of $H$  is a
function $w:V(H)\rightarrow [0,1]$ such that for each $e\in E$,
$\sum_{v\in e}w(v)\ge 1$ . Let
$$\tau^*(H)=\min\left\{\sum_{v\in V(H)}w(v): w \mbox{ is a
  fractional vertex cover of $H$}\right\}.$$
Then the strong duality theorem of linear programming gives
$$\nu^*(H)=\tau^*(H).$$

We conclude this section by  proving the existence of a perfect fractional
matching in a uniform hypergraph whose independence number is not too large
(as otherwise such hypergraphs would be close to $H_k^{k-l}(U,W)$).

\begin{lemma}\label{4.4}
	Let $k,l$ be integers such that $k\ge 3$ and $k/2\le l <k$,
        and let  $\varepsilon, \xi$ be positive reals such that
        $\xi< (\varepsilon/5)^2 (3k)^{-4(k-l)}$.
	Let $n$ be a positive integer such that $n$ is sufficiently large and
	$n\in k \Z$.
     Let $H$ be a $k$-graph of order $n$ such that $\delta_{l}(H)> {n-l\choose k-l}-{n-l-n/k\choose k-l}-\xi n^{k-l}$
and $\alpha(H)\leq (1-1/k-\varepsilon/5)n$. Then
           $H$ contains a  perfect fractional matching.
\end{lemma}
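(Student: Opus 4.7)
I would prove this by contradiction, combining LP duality with the stability result Lemma~\ref{stafrankl}.

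Suppose $\nu^*(H)<n/k$. Since $\nu^*(H)=\tau^*(H)$, there exists a fractional vertex cover $w\colon V(H)\to[0,1]$ with $\sum_v w(v)<n/k$ and $\sum_{v\in e}w(v)\ge 1$ for every $e\in E(H)$. The set $A:=\{v:w(v)<1/k\}$ contains no edge of $H$, because any such edge would satisfy $\sum_{v\in e}w(v)<1$. Hence $A$ is independent in $H$ and so by hypothesis $|A|\le (1-1/k-\varepsilon/5)n$, which gives $|V(H)\setminus A|\ge (1/k+\varepsilon/5)n$.

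Next I would pass to the complementary $k$-graph $\overline H$ of non-edges. The minimum-degree hypothesis ensures that for every $l$-set $L\subseteq V(H)$ the link $F_L$ of $L$ in $\overline H$, viewed as a $(k-l)$-graph on $V(H)\setminus L$, has fewer than $\binom{n-l-n/k}{k-l}+\xi n^{k-l}$ edges, matching the Frankl bound for matching number $n/k$ up to the additive error $\xi n^{k-l}$. After passing to a shifted, stable version $F_L^{\mathrm{st}}$ and using the cover $w$ to force $\nu(F_L^{\mathrm{st}})\le n/k$ --- the key point being that an $(n/k)$-matching of non-edges of $H$ would, when combined with the covering constraint on the (missing) $k$-edges, force $\sum_v w(v)\ge n/k$ --- Lemma~\ref{stafrankl} (applied with $m=n/k$ and with $k$ replaced by $k-l$) yields that $F_L^{\mathrm{st}}$ is $\sqrt{\xi'}$-close to some $H_{k-l}^{k-l}(U_L,W_L)$ with $|W_L|=n/k$.

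Aggregating these local structural conclusions across all $l$-sets $L$ and running a consistency argument to show that the sets $W_L$ cohere along a common core $W\subseteq V(H)$ of size $n/k$, I would conclude that $H$ is itself $\varepsilon'$-close to $H_k^{k-l}(V\setminus W,W)$ for some $\varepsilon'=O(\sqrt\xi)$. Since $V\setminus W$ is independent in $H_k^{k-l}(V\setminus W,W)$, the minimum-degree hypothesis forces at most $O(\varepsilon'n^k)$ edges of $H$ to lie inside $V\setminus W$; a standard greedy deletion then yields an independent set in $H$ of size at least $(1-1/k)n-O(\varepsilon'^{1/k}n)$, which by the choice $\xi<(\varepsilon/5)^2(3k)^{-4(k-l)}$ strictly exceeds $(1-1/k-\varepsilon/5)n$ and contradicts the hypothesis on $\alpha(H)$.

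The main obstacle is the middle step: verifying the matching bound $\nu(F_L^{\mathrm{st}})\le n/k$ so that Lemma~\ref{stafrankl} applies (which requires translating the LP cover data into a genuine combinatorial obstruction to large non-edge matchings), and then coherently aggregating the per-$l$-set stability conclusions into a single global statement about $H$.
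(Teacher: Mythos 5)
Your plan is a genuinely different route from the paper's. The paper's proof is \emph{constructive}: given a minimum fractional cover $\omega$ with nonincreasing weights, it adds every $k$-set $e$ with $\sum_{v\in e}\omega(v)\ge 1$ to form a stable $H'$ with $\tau^*(H')=\tau^*(H)=\nu^*(H)$; it considers a \emph{single} link $G = N_{H'}(S)$, where $S$ is the set of the $l$ lowest-weight vertices; applies Lemma~\ref{stafrankl} to lower-bound $d_G$ at a carefully chosen threshold index $t=n/k-\lfloor\eta n\rfloor$; and then builds an explicit perfect matching in $H'$ greedily. Crucially, the hypothesis $\alpha(H)\le(1-1/k-\varepsilon/5)n$ is used there only to \emph{supply} $\lfloor\eta n\rfloor$ disjoint edges inside the residual set $V(H')\setminus[t]$, not to derive a contradiction.

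Your contradiction route has two gaps beyond the aggregation issue you flag. First, the justification you sketch for $\nu(F_L^{\mathrm{st}})\le n/k$ does not work: the covering inequalities $\sum_{v\in e}w(v)\ge 1$ apply only to \emph{edges} of $H$, so a collection of disjoint non-edges (which is what a matching in $F_L$ inside $\overline H$ gives you) implies nothing about $\sum_v w(v)$. The paper never needs such a bound --- it simply disposes of the case $\nu(G)\ge n/k$ directly as a separate, easy case.

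Second, and more fundamentally, your final step is false. Knowing $e(H[V\setminus W])\le\varepsilon' n^k$ does \emph{not} yield an independent set of size $(1-1/k)n-O(\varepsilon'^{1/k}n)$ inside $V\setminus W$: a $k$-graph on $\Theta(n)$ vertices with $\varepsilon' n^k$ edges can have only polylogarithmic independence number (random sparse $k$-graphs show this), and no greedy or probabilistic deletion argument saves you. Concretely, partition $V\setminus W$ into $\Theta(n)$ blocks of size $k$ and add each block as an edge to $H_k^{k-l}(V\setminus W,W)$ with $|W|=n/k$: this uses only $O(n)$ extra edges, so the resulting $H$ is $\varepsilon'$-close to $H_k^{k-l}$ for every $\varepsilon'$, satisfies the degree hypothesis (it is a supergraph), has a perfect fractional matching (inherited from a perfect matching of $H_k^{k-l}$), \emph{and} satisfies $\alpha(H)\le(1-1/k)^2 n<(1-1/k-\varepsilon/5)n$. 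Thus ``$H$ is close to $H_k^{k-l}(V\setminus W,W)$'' is entirely compatible with the $\alpha$-bound, and the contradiction you seek simply is not there. The $\alpha$-bound must be put to a different use, which is precisely what the paper's constructive argument does.
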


\pf
	For convenience, let $V(H)=[n]$.
	Let $\omega$ be a minimum fractional vertex cover of $H$
	and we may assume that $\omega(1)\geq \omega(2)\ge \ldots\geq \omega(n)$.
        Let $E'=\{e\in {[n]\choose k}\ :\ e\notin E(H)\ \mbox{and }\sum_{i\in e}\omega(i)\geq 1\}$ and
	let $H'$ be obtained from $H$ by adding the edges in $E'$. Then $H'$ is stable and $\tau^*(H')=\tau^*(H)$.
	Thus  $\nu^*(H)=\nu^*(H')\geq \nu(H')$,
	and it suffices to show that $\nu(H')=n/k$,
	i.e., $H'$ contains a perfect matching.

	Let $S=[n]\setminus [n-l]$,
	and let $G$ be the hypergraph with $V(G)=[n]$ and $E(G)=N_{H'}(S)$, which is a
        $(k-l)$-graph on $[n]$. Since $H'$ is stable, $G$
        is also stable.        We may assume that
	
        \begin{itemize}
        \item [(1)] 	 $\nu(G)\le n/k-1$.
        \end{itemize}	
        For, otherwise, let $f_1, \ldots, f_{n/k}$ be a matching in $G$. Now $[n]\setminus \left(\bigcup_{i=1}^{n/k}f_i\right)$ is a set of size $(n/k)l$ and, hence, can
        be partitioned into $l$-sets, say $S_1, \ldots, S_{n/k}$. Since
        $H'$ is stable and $S\cup f_i\in E(H')$ for $i\in [n/k]$,  we
        have $S_i\cup f_i\in E(H')$ for $i\in [n/k]$. Hence,
        $\{S_i\cup f_i\ : \ i\in [n/k]\}$ is a perfect matching in $H'$. $\Box$
  \medskip

       We may also assume that
       \begin{itemize}
       \item [(2)]  $l\le k-2$.
       \end{itemize}
        For, suppose $l=k-1$. Then $G$ is a $1$-graph. Since $H'$ is
        stable and  $e(G)\ge \delta_{k-1}(H)\ge n/k-\lceil \xi n\rceil$, the first
      $n/k-\lceil \xi n\rceil$ vertices of $G$ are edges of
      $G$.

    Note that $H'- [n/k-\lceil \xi n\rceil]$ has
      $n-n/k+\lceil \xi n\rceil$ vertices. Since  $\alpha(H)\leq
      (1-1/k-\varepsilon/5)n$, $H'- [n/k-\lceil \xi n\rceil]$ has an
      edge. In fact, since $\xi< (\varepsilon/5)^2 (3k)^{-4(k-l)}$, we can greedily find
      pairwise disjoint edges
      $f_1,\ldots,f_{\lceil \xi n\rceil}$ in $H'- [n/k-\lceil \xi n\rceil]$.
     Since  $$n -(n/k-\lceil \xi n\rceil)- \lceil \xi n\rceil k=(k-1)(n/k- \lceil \xi n\rceil),$$
     we can partition $[n]\setminus  [n/k-\lceil \xi n\rceil] \setminus
\bigcup_{i=1}^{\lceil \xi n\rceil}f_i$ into $(k-1)$-sets $S_1,\ldots,
S_{n/k-\lceil\xi n\rceil}$. Now $S_i\cup \{i\}$, $i\in \left[
n/k-\lceil \xi n\rceil\right]$, form a matching in $H'$.  These edges and $\{f_1,\ldots,f_{\lceil \xi n\rceil}\}$ form a perfect matching in $H'$. $\Box$
	
\medskip

	Let $\eta=\varepsilon/(5k)$ and let $t= n/k -  \lfloor {\eta} n
        \rfloor$. For $i\in [n]$, we use $d_G(i)$ to denote the degree of $i$ in $G$.  We claim that

         \begin{itemize}
            \item [(3)] $d_G(t)>        \binom{n-1}{k-l-1}-\binom{n/(2k)}{k-l-1}$.
         \end{itemize}
        For suppose $d_G(t)\le     \binom{n-1}{k-l-1}-\binom{n/(2k)}{k-l-1}$.
  	Since $H'$ is stable,  $d_G(i)\le
        \binom{n-1}{k-l-1}-\binom{n/(2k)}{k-l-1}$ for $t\le i\le
        n/k$. Note that the degree of $t$ in $H_{k-l}^{k-l}([n]\setminus [n/k], [n/k])$ is ${n-1\choose k-l-1}$.
Thus,
   	\begin{eqnarray*}
		& & \left|E\left(H_{k-l}^{k-l}\left([n]\setminus [n/k],[n/k]\right)\right)\setminus E(G)\right|\\
		& \ge & \frac{1}{k-l}\left(\sum_{i=t}^{n/k} \left(d_{H_{k-l}^{k-l}([n]\setminus [n/k],[n/k])}(i)- d_G(i)\right) \right) \\
                &\ge & \frac{1}{k-l}	(n/k-t+1)	 \binom{n/(2k)}{k-l-1}\\
                & > & \frac{1}{k-l} \eta n (3k)^{-(k-l-1)}\binom{n}{k-l-1} \\
		& > & \sqrt \xi n^{k-l},
	\end{eqnarray*}
	as $\xi< (\varepsilon/5)^2 (3k)^{-4(k-l)}$.

	Hence $G$ is not $\sqrt{\xi}$-close to $H_{k-l}^{k-l}([n]\setminus [n/k], [n/k])$.
	However, since $G$ is stable and $n/k\le n/(2(k-l))$ (as $l\ge
        k/2$),  we may apply
        Lemma~\ref{stafrankl}
        with $n/k, k-l, \xi$ as
        $m,k,\xi$, respectively.
	So $\nu(G)\ge n/k$, contradicting (1). $\Box$

\medskip
	
        Note that $H'-[t]$ has $n-n/k+\lfloor {\eta} n \rfloor$
        vertices. Since  $\alpha(H)\leq (1-1/k-\varepsilon/5)n$,
        $H'-[t]$ has an edge. In fact, since  $\varepsilon n = 5k \eta
        n$,  $H'-[t]$ has  $\lfloor  {\eta} n \rfloor$ pairwise
        disjoint edges, say $f_1, \ldots f_{\lfloor \eta
          n\rfloor}$. Let $T=\bigcup_{i=1}^{\lfloor \eta n\rfloor} f_i$.

         Next we  find disjoint edges $e_1, \ldots,
       e_t$ of $G$  such that $|e_i\cap [t]|=1$ and $e_i\cap
       T=\emptyset$ for all $i\in
       [t]$.
       Suppose for some $s\in
       [t-1]$  we have found pairwise disjoint edges $e_1,\ldots, e_s$ of $G$ such that, for $i\in [s]$,  $e_i\cap [t]=\{i\}$ and $e_i\cap
       T=\emptyset$. The number of edges of $G$
       containing $s+1$ and intersecting $T\cup ([t]\setminus
       \{s+1\})\cup (\bigcup_{i=1}^se_i)$
       is at most $\binom{n-1}{k-l-1}-\binom{n-|T|-t-(k-l)s}{k-l-1}$. Note that
       $n-|T|-t-(k-l)s\ge n/(2k)$, as $l\ge k/2$. Hence, by (3),  there exists $e_{s+1}\in E(G)$ such
       that $e_{s+1}\cap [t]=\{s+1\}$, $e_{s+1}\cap T=\emptyset$, and
       $e_{s+1}$ is disjoint from $\bigcup_{i=1}^se_i$.

       Since $t=n/k-\lfloor \eta n\rfloor$,  $H-[t]-T-\bigcup_{i=1}^te_i$ has exactly $tl$ vertices (as $|e_i\cap [t]|=1$ for $i\in [t]$).
Partition the vertices in $H-[t]-T-\bigcup_{i=1}^te_i$ to pairwise disjoint $l$-sets
        $S_1,\ldots, S_t$. Then, since $H$ is stable, $S_i\cup e_i\in
        E(H')$ for $i\in [t]$. Hence, $\{f_i : i\in [\lfloor \eta
          n\rfloor]\}\cup \{S_j\cup e_j : j\in [t]\}$ is a perfect matching in $H'$.  \qed

\medskip

{\it Remark}. When we apply Lemma~\ref{stafrankl} in the end of the
proof of (3), we require $l\ge k/2$ so that $n/k\le n(2(k-l))$ (which
amounts to $m\le n/(2k)$ in Lemma~\ref{stafrankl}). This is not
necessary when $k=3$ and $l=1$, as we can use Lemma~\ref{3-graph-frac} (see Section 6)
which is the same as  Lemma~\ref{stafrankl} except with $m\le n/(2k)=n/4$ replaced by $m\le
n/2-1$.

%%%%%%%%%%%%%%%%%%%%%%%%%%%%%%%%%%%%%%%%%%%%%%%%%%%%%%%%%%%%%%%%%%% Section 5 %%%%%%%%%%%%%%%%%%%%%%%%
\section{Almost perfect matchings}
%%%%%%%%%%%%%%%%%%%%%%%%%%%%%%%%%%%%%%%%%%%%%%%
To complete the proof of Theorem~\ref{main-thm}, we need to consider $n$-vertex $k$-graphs $H$
that are not close to $H_k^{k-l}(U, W)$ for any partition $U,W$ of
$V(H)$  with $|W|=m$ and $|U|=n-m$. This will be done using the
approach in Alon {\it et al.} \cite{AFHRRS12} to
find random subgraphs of $H$ and use them to find an almost perfect
matching in $H$.

We first use the absorbing lemma in Section 3 to find a
small matching $M_a$ in $H$ such that for any small set $S\subseteq V(H)$, $H[V(M_a)\cup S]$ has a nearly perfect matching. We then find
an almost perfect matching in $H-V(M_a)$ (see Lemma~\ref{almostperfect}), and use $M_a$ to absorb the unmatched vertices. To find this almost perfect  matching in $H-V(M_a)$, we  will find an
almost regular subgraph of $H$ with bounded  maximum 2-degree, so that the following result of
Frankl and R\"{o}dl \cite{FR85} can be applied. For any positive integer $l$, we use $\Delta_l(H)$ to denote the maximum $l$-degree of $H$.

\begin{lemma}[Frankl and R\"odl, 1985]\label{Rodl}
	For every integer $k \geq 2$ and any real $\varepsilon>0$,
	there exist $\tau=\tau(k,\varepsilon)$
	and $d_0=d_0(k,\varepsilon)$ such that,
	for every $n \ge D \ge d_0$ the following holds:
	Every $k$-graph on $n$ vertices with $(1-\tau)D <d_H(v)<(1+\tau)D$ and $\Delta_2(H) <\tau D$
	contains a matching covering all but at most $\varepsilon n$ vertices.
\end{lemma}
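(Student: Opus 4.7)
The plan is to prove this by the R\"{o}dl nibble (semi-random method), which iteratively extracts a small partial matching and passes to a residual hypergraph that preserves the near-regularity and small-codegree hypotheses. Fix a small constant $\gamma>0$ depending on $\varepsilon$, and set $T:=\lceil \gamma^{-1}\ln(2/\varepsilon)\rceil$. Starting from $H^{(0)}=H$, $n_0=n$, $D_0=D$, I would inductively construct hypergraphs $H^{(i)}$ on vertex sets $V^{(i)}$ with $n_i:=|V^{(i)}|$ that are approximately $D_i$-regular with $\Delta_2(H^{(i)})\le \tau_i D_i$, where $n_i\approx n(e^{-\gamma})^i$, $D_i\approx D(e^{-\gamma(k-1)})^i$, and $\tau_i$ remains below some threshold $\tau^*(\varepsilon,k)$ throughout.

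The single nibble step proceeds as follows. Include each edge $e\in E(H^{(i)})$ independently with probability $p_i=\gamma/D_i$, obtaining a random set $E_i^*$. Declare an edge $e\in E_i^*$ \emph{bad} if it shares a vertex with some other edge of $E_i^*$; remove all bad edges to obtain the partial matching $M_i\subseteq E_i^*$. Set $H^{(i+1)}:=H^{(i)}-V(M_i)$. The key computations are: the probability that a fixed vertex $v$ is covered by $M_i$ is $1-(1-p_i)^{d(v)}+O(p_i^2\Delta_2 d(v))=1-e^{-\gamma}+o(1)$; and the conditional expected degree of a surviving vertex $v$ in $H^{(i+1)}$ is approximately $D_i e^{-(k-1)\gamma}$, since each of the other $k-1$ vertices in an edge through $v$ independently survives with probability about $e^{-\gamma}$.

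To turn these expectation statements into high-probability pointwise bounds I would apply Talagrand's inequality (or a martingale/Azuma argument based on revealing edges one at a time) to the degree of each vertex in $H^{(i+1)}$, and also to $\Delta_2(H^{(i+1)})$. A union bound over the at most $n_i^2$ events of interest costs only $n_i^2\exp(-\Omega(\gamma^2 D_i))$, which is negligible provided $D\ge d_0(k,\varepsilon)$ is large enough and $\tau$ is chosen sufficiently small (so the losses in $\tau_i$ per round sum to at most $\tau^*$). After $T$ rounds, the number of uncovered vertices is at most $n_T\le n\cdot e^{-\gamma T}+o(n)\le \varepsilon n$, so the union $M:=\bigcup_{i=0}^{T-1}M_i$ is a matching in $H$ leaving at most $\varepsilon n$ vertices uncovered.

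The main obstacle is propagating the regularity and codegree hypotheses through the iteration: one must ensure that the concentration error in each round is much smaller than the next-round tolerance $\tau_{i+1}D_{i+1}$, while $D_i$ geometrically decreases. This is exactly the technical heart of the Frankl--R\"{o}dl argument; the choice of $\tau=\tau(k,\varepsilon)$ has to be made so small that the accumulated deviation over $T=O(\log(1/\varepsilon))$ rounds remains within the required band, and the lower bound $d_0=d_0(k,\varepsilon)$ has to be large enough that Talagrand concentration beats the union bound in every round.
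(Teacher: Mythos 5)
The paper does not prove this lemma: it is quoted as a known result of Frankl and R\"odl (1985), later refined by Pippenger and Spencer, and is applied as a black box. Your proposal correctly identifies the underlying technique --- the semi-random (nibble) method --- which is indeed how Frankl and R\"odl proved it, so the overall route is the right one.

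There is, however, a genuine gap in the concentration step as you describe it. The statement allows $D$ to be as small as a constant $d_0(k,\varepsilon)$ while $n$ is arbitrarily large, and your claim that a union bound over the $\le n_i^2$ degree/codegree events ``costs only $n_i^2\exp(-\Omega(\gamma^2D_i))$, which is negligible provided $D\ge d_0$ is large enough'' is false in that regime: when $D_i = O(1)$ and $n_i \to \infty$, the quantity $n_i^2\exp(-\Omega(\gamma^2 D_i))$ diverges. Your argument would actually require $D \gtrsim \log n$, which is a strictly stronger hypothesis than the lemma grants. The standard fix --- and the point where the Frankl--R\"odl/Pippenger--Spencer proof is genuinely more delicate than a naive nibble --- is not to insist that \emph{every} vertex has concentrated degree after each round. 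Instead, one bounds the \emph{expected number} of vertices whose degree deviates (using Chebyshev or a second-moment computation, for which the small-codegree hypothesis is what makes the variance small), then invokes Markov to conclude that with positive probability only an $o(1)$ fraction of vertices (and pairs) are exceptional. Those exceptional vertices are simply set aside --- they contribute to the final $\varepsilon n$ uncovered vertices --- and one continues the iteration on the well-behaved part. This per-round discard has to be tracked alongside the geometric decay of $n_i$ so that the total discarded mass stays below $\varepsilon n/2$, but it removes any need for $D$ to grow with $n$. Without this modification your proof does not establish the lemma at the stated generality.
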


In order to find a subgraph in a $k$-graph satisfying conditions in Lemma~\ref{Rodl}, we use the same two-round randomization
technique as in \cite{AFHRRS12}. The only difference is that in the first
round, we also need  to  bound the independence number of the subgraph
(in order to deal with  hypergraphs not close to $H_k^{k-l}(U,W)$), which we were not able to do
using the second  moment method alone. Here we use the hypergraph
container result of Balogh {\it et al.} \cite{BMS15}. (A similar result is proved independently
by Saxton and Thomason \cite {ST15}.)
To state that result, we need additional terminology.

A family ${\cal F}$ of subsets    of a set $V$  is
said to be {\it increasing} if, for any $A\in \F$ and
$B\subseteq V$,   $A\subseteq B$ implies $B\in \F$.
Let $H$ be a hypergraph. We use $v(H), e(H)$ to denote the number of
vertices,  number of edges in $H$, respectively, and use $\I(H)$ to
denote the collection of all independent sets in $H$. Let $\varepsilon>0$,
and  ${\cal F}$ be a family of subsets of $V(H)$. We say that $H$ is \textit{$(\mathcal F , \varepsilon)$-dense} if $e(H[A])\ge \varepsilon e(H)$ for every $A\in \F$. We use $\overline{\F}$ to denote the family consisting of subsets of $V(H)$ not in $\F$.

\begin{lemma}[Balogh, Morris, and Samotij, 2015]\label{thm2.2}
        For every $k \in \N$ and all positive $c$ and $\varepsilon$, there exists a positive constant $C$ such that the following holds.
        Let $H$ be a $k$-graph and let $\F$ be an increasing family of subsets of $V(H)$ such that
        $|A| \ge \varepsilon v(H)$ for all $A \in \F$.
        Suppose that $H$ is $(\mathcal F , \varepsilon)$-dense and $p \in (0, 1)$ is such that, for every $l \in [k]$,
        \[\Delta_l(H)\le cp^{l-1}\frac{e(H)}{v(H)}.
        \]
        Then there exist a family $\S\subseteq \binom{V(H)}{\le Cp v(H)}$ and functions $f: \S \to \overline{\F}$ and $g: \I (H) \to \S$ such that, for every $I\in \I (H)$,
        \[g(I)\subseteq I \quad and \quad I\setminus g(I)\subseteq f(g(I)).\]
\end{lemma}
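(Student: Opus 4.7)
The plan is to prove this via the container algorithm of ``scythe'' type that underlies the Balogh--Morris--Samotij result (and, independently, the Saxton--Thomason container theorem). For each independent set $I \in \I(H)$ we want to extract a small fingerprint $g(I) \subseteq I$ from which a container $f(g(I)) \in \overline{\F}$ satisfying $I \setminus g(I) \subseteq f(g(I))$ can be reconstructed using only $g(I)$ together with the fixed algorithm and the hypergraph $H$.

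First I would set up a deterministic iterative procedure. Fix once and for all a linear order on $V(H)$ and on $\binom{V(H)}{l}$ for each $l \in [k]$. The algorithm maintains an ``available pool'' $A_i \subseteq V(H)$ (with $A_0 = V(H)$) together with a partial fingerprint $S_i \subseteq I$. At stage $i$, using only the state $(A_i, S_i)$ and the ambient $H$, it searches for the smallest $l \in [k]$ such that $H[A_i]$ still contains an $l$-set $T$ whose co-degree is at least a threshold of order $p^{l-1} e(H)/v(H)$, picks the first such $T$ under the ordering and a distinguished vertex $v \in T \cap I$ of large residual degree, appends $v$ to $S_{i+1}$, and deletes from the pool a removal set of vertices that are dominated by $v$ under the ordering at level $l$. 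The critical feature is that the choice of $v$ and of the removal set depends only on $(A_i, S_i)$ and $H$, never on $I$ directly; so once we record $g(I) := S_{\mathrm{final}}$, rerunning the algorithm with $g(I)$ in place of $I$ recovers $f(g(I)) := A_{\mathrm{final}}$.

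The algorithm halts when no qualifying $l$-set remains in $A_i$ for any $l \in [k]$. The $(\F, \varepsilon)$-dense hypothesis then forces $f(g(I)) \in \overline{\F}$: if instead $A_{\mathrm{final}} \in \F$, then by hypothesis $e(H[A_{\mathrm{final}}]) \ge \varepsilon e(H)$, and a standard double-counting via the maximum-$l$-degree bound shows that some $l$-set of the required co-degree must persist in $A_{\mathrm{final}}$, contradicting termination. Since $g(I) \subseteq I$ by construction and every vertex of $I$ removed from the pool is witnessed to lie in the removal set of some fingerprint step, we also get $I \setminus g(I) \subseteq f(g(I))$.

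The main obstacle, and the technical heart of the argument, is the fingerprint-size bound $|g(I)| \le Cp v(H)$. This requires a careful ``degree decay'' analysis exploiting the full chain of hypotheses $\Delta_l(H) \le cp^{l-1} e(H)/v(H)$ for $l \in [k]$: at each step the thresholds must be tuned so that, amortized across the $k$ levels, appending one vertex to the fingerprint forces the pool to shrink by $\Omega(p v(H))$ vertices. Making this precise requires a potential function that weights the residual edges of $H[A_i]$ by the level at which they remain processable and verifies that the potential telescopes cleanly over the $k$ levels, so that the total number of fingerprint additions is linear in $p v(H)$. That is where I would expect to spend essentially all of the effort; the rest of the proof is the bookkeeping needed to make the algorithm deterministic in $(A_i, S_i)$ and to verify the three containment properties of $f$ and $g$.
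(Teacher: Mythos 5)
This lemma is not proved in the paper at all; it is quoted verbatim as an external black-box result from Balogh, Morris, and Samotij (reference \cite{BMS15}), with the parenthetical attribution in the theorem header. There is therefore no ``paper's own proof'' to compare against.

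Your sketch correctly identifies the general shape of the BMS argument (a deterministic scythe-type algorithm that builds a fingerprint $g(I)\subseteq I$ and a container $f(g(I))$, with the $(\F,\varepsilon)$-dense hypothesis guaranteeing that the terminal pool lies in $\overline{\F}$, and the chain of $\Delta_l$ bounds controlling the fingerprint size). But as written it is a plan rather than a proof: you explicitly defer the entire technical core --- the amortized degree-decay/potential-function analysis that yields $|g(I)|\le Cpv(H)$ --- to ``where I would expect to spend essentially all of the effort,'' and you also do not specify the removal rule, the thresholds, or the induction over uniformities $k, k-1, \dots, 1$ that BMS use to reduce a $k$-graph step to a $(k-1)$-graph step. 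If you intend to reprove this lemma rather than cite it, those are precisely the pieces that must be supplied; in the context of this paper, however, the correct move is simply to cite \cite{BMS15} (or the independent Saxton--Thomason version \cite{ST15}), exactly as the authors do.
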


Before we apply Lemma~\ref{thm2.2} to control the independence
number of a random subgraph of certain $k$-graphs, we show for any
$\varepsilon >0$ there exist $\rho', \varepsilon' >0$ such that if an $n$-vertex
$k$-graph $H$ is not $\varepsilon$-close to $H_k^{k-l}(U,W)$ for any partition $U,W$ of $V(H)$ with $|U|=n-m$
and $|W|=m$,
and if $\delta_{l}(H)\geq {n-l\choose k-l}-{n-l-m\choose k-l}-\rho' n^{k-l}$ then  $H$ should be
$(\F,\varepsilon')$-dense for some  family $\F$ consisting of large
subsets of $V(H)$.

\begin{lemma}\label{4.1}
	Let $k,l$ be integers such that $k\ge 2$ and $l\in [k-1]$. Let
        $0< \varepsilon\ll 1$, $\rho'\le \varepsilon/8$, and
        $0<\mu\le \varepsilon/40$. Let $m,n$ be
        sufficiently large integers such that $ n/k-\mu n\le m\le n/k$. Suppose $H$ is a $k$-graph with order $n$ such that
        	$\delta_l(H)> {n-l\choose k-l}-{n-l-m\choose k-l}-\rho' n^{k-l}$, and $H$ is not
        $\varepsilon$-close to $H_k^{k-l}(U,W)$ for any partition
        $U,W$ of $V(H)$ with $|W|=m$. Then $H$ is
        $(\F, \varepsilon/(2k!))$-dense, where $\F=\{A\subseteq V(H) : |A|\ge (1-1/k-\varepsilon/4)n\}$.
\end{lemma}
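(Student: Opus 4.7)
The plan is a contrapositive argument: suppose $H$ is not $(\F,\varepsilon/(2k!))$-dense, so there is some $A\subseteq V(H)$ with $|A|\ge (1-1/k-\varepsilon/4)n$ and $e(H[A])<(\varepsilon/(2k!))e(H)$. I then build a partition $(U,W)$ of $V(H)$ with $|W|=m$ for which $H$ is $\varepsilon$-close to $H_k^{k-l}(U,W)$, contradicting the hypothesis of the lemma.

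The workhorse is a double-counting inequality obtained by summing the minimum $l$-degree bound over all $l$-subsets $T\subseteq U$ for any fixed partition $(U,W)$ of $V(H)$ with $|U|=n-m$:
\[
\sum_{T\in\binom{U}{l}}\left(\binom{n-l}{k-l}-d_H(T)\right)\ \le\ \binom{|U|}{l}\binom{n-l-m}{k-l}+\binom{|U|}{l}\rho'n^{k-l}.
\]
Interpreting the left side as $\sum_{i=0}^{k-l}\binom{k-i}{l}f_i$, where $f_i$ is the number of non-edges of $H$ with exactly $i$ vertices in $W$, and using $\binom{n-m}{l}\binom{n-m-l}{k-l}=\binom{k}{l}\binom{n-m}{k}$ together with $\binom{k}{l}f_0=\binom{k}{l}(\binom{n-m}{k}-e(H[U]))$, the $i=0$ term cancels and yields
\[
|E(H_k^{k-l}(U,W))\setminus E(H)|\ =\ \sum_{i=1}^{k-l}f_i\ \le\ \binom{k}{l}e(H[U])+\rho'n^k/l!.
\]
So the remaining task is to choose $U$ of size $n-m$ so that $e(H[U])$ is small enough to keep this right-hand side below $\varepsilon n^k$.

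If $|A|\ge n-m$, I let $U$ be any $(n-m)$-subset of $A$, so that $e(H[U])\le e(H[A])<\varepsilon n^k/(2(k!)^2)$, and the bound above simplifies to at most $\varepsilon n^k/4+\varepsilon n^k/8<\varepsilon n^k$. If $|A|<n-m$, I let $U=A\cup Y$ for an arbitrary $Y\subseteq V(H)\setminus A$ with $|Y|=n-m-|A|$; the constraints $|A|\ge (1-1/k-\varepsilon/4)n$, $m\ge n/k-\mu n$, and $\mu\le\varepsilon/40$ yield $|Y|\le(\mu+\varepsilon/4)n\le(11\varepsilon/40)n$, while the elementary bound $e(H[U])\le e(H[A])+|Y|\binom{n-1}{k-1}$ (edges of $H[U]$ not in $H[A]$ must pass through $Y$) lets me bound the three contributing terms separately so their sum is still strictly less than $\varepsilon n^k$.

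The main obstacle is keeping the numerical constants under control in this second case, particularly the term $\binom{k}{l}|Y|\binom{n-1}{k-1}$, which is the dominant contribution. A case check over $k\ge 2$ and $l\in[k-1]$ confirms the desired inequality holds uniformly, with the tightest instance being $k=2,l=1$, where the budget is barely met thanks to the specific choices $\mu\le\varepsilon/40$ and $\rho'\le\varepsilon/8$.
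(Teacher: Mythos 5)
Your proof is correct, and it takes a genuinely different route from the paper's. The paper argues as you do by contradiction, but it partitions the bad $k$-sets in $E(H_k^{k-l}(U,W))\setminus E(H)$ according to whether they hit the leftover set $B = U\setminus A$, then bounds the remaining piece by summing a pointwise estimate $d_{H_0}(S) - d_H(S) < \rho' n^{k-l}$ over $l$-sets $S\subseteq A$, with several rough estimates along the way. Your argument instead normalizes $|U| = n-m$ and exploits the exact cancellation $\binom{n-m}{l}\binom{n-m-l}{k-l} = \binom{k}{l}\binom{n-m}{k}$ in the double count over $l$-sets $T\subseteq U$, which isolates $\sum_{i\ge 1} f_i$ with no slack beyond the deficit term $\rho' n^{k}/l!$ and the interior-edge term $\binom{k}{l}e(H[U])$. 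This is cleaner and more transparent about where each piece of the error budget goes, and it also handles the case $k=2$ without any special pleading (the paper's displayed chain of inequalities invokes $k\ge 3$ in bounding $11/(40k)\le 11/120$, even though the lemma is stated for $k\ge 2$, so the paper's own write-up has a small gap there that your identity avoids). The price of your approach is the explicit two-way case split on $|A|$ versus $n-m$, but that is a modest cost, and your verification that the constants close — tightest at $k=2$, $l=1$ — is sound. One minor presentational note: when you pass from $\sum_{i=1}^{k-l}\binom{k-i}{l}f_i$ to $\sum_{i=1}^{k-l} f_i$, it is worth saying explicitly that $\binom{k-i}{l}\ge 1$ for all $1\le i\le k-l$ (with equality at $i=k-l$), as this is the only place the direction of that inequality is used.
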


\pf   Suppose to the contrary that there exists $A\subseteq V(H)$ such
       that $|A|\ge (1-1/k-\varepsilon/4)n$ and $e(H[A])\le \varepsilon e(H)/(2k!)$. By removing vertices if necessary,
        we may choose such $A$  that  $|V(H)\setminus A|\ge m$ (as $m\le n/k$), and let $W\subseteq V(H)\setminus A$ such that
        $|W|=m$.  For convenience, let  $B=V(H)\setminus W\setminus A$. Then
        \[
          	|B|\le n - m- (1-1/k-\varepsilon/4)n \le \varepsilon n/4 + n/k-(1/k-\mu)n\le
        11\varepsilon n/40.
        \]
        %  for $n\gg n-km$.
        Let $U=V(H)\setminus W$ and
        $H_0=H_{k}^{k-l}(U,W)$. We derive a contradiction by showing
        that $|E(H_0)\setminus E(H)|<\varepsilon n^k$.

        Note that, for each $f\in E(H_0)\setminus E(H)$, we have $1\le |f\cap
        W|\le k-l$ (by definition of $H_0$);  so  $|f\cap B|> 0$ or $|f\cap A|\ge l$. Thus
          \[
             |E(H_0)\setminus      E(H)|\le |\{f\in E(H_0): |f\cap B|>0\}|+|\{f\in E(H_0)\setminus E(H): |f\cap A|\ge l\}|.
        \]
        It is easy to see that
        \[
             |\{f\in E(H_0): |f\cap B|>0\}|\le |B||W|n^{k-2}\le (11\varepsilon n/40)(n/k)n^{k-2}=\frac{11\varepsilon}{40k} n^k.
        \]

	Next, we bound $|\{f\in E(H_0)\setminus E(H): |f\cap A|\ge l\}|$.  Fix an
        arbitrary  $l$-set $S\subseteq A$.     Note that
     \[
        |\{f\in E(H): S\subseteq f\mbox{ and } f\cap B\ne \emptyset\}|\le |B|
        n^{k-l-1}\le \frac{11\varepsilon}{40} n^{k-l}.
    \]
      For  any  $f\in E(H)$ and $S\subseteq f$, we have  $f\cap B\ne
        \emptyset$, or $f\subseteq A$, or $f\in E(H_0)$. So
          \begin{eqnarray*}
             & & |\{f\in E(H) : S\subseteq f \mbox{ and } f\in E(H_0)|\\
             &\ge & d_H(S)-|\{f\in E(H) : S\subseteq f  \mbox{ and } f\cap B\ne \emptyset\}| -|\{f\in E(H) : S\subseteq f  \mbox{ and } f\subseteq A\}|\\
             & \ge & d_H(S) -\frac{11\varepsilon}{40} n^{k-l}-d_{H[A]}(S).
          \end{eqnarray*}

Hence,
\begin{eqnarray*}
& & |\{f\in E(H_0)\setminus E(H) : |f\cap A|\ge l\}|\\
&\le & \sum_{S\in {A\choose l}}|\{f\in E(H_0)\setminus E(H): S\subseteq f\}|\\
& \le &\sum_{S\in {A\choose l}}\left( d_{H_0}(S)- |\{f\in E(H): f\in E(H_0) \mbox{ and } S\subseteq f\}|\right) \\
&\le & \sum_{S\in {A\choose l}}\left(d_{H_0}(S)- d_H(S)+ \frac{11\varepsilon}{40} n^{k-l}+ d_{H[A]}(S)\right).
\end{eqnarray*}
Note that for $S\in {A\choose l}$, $d_{H_0}(S)={n-l\choose
  k-l}-{n-l-m\choose k-l}$ and, hence,
$d_{H_0}(S)-d_H(S)<\rho' n^{k-l}$ by the assumption on $\delta_l(H)$. Hence,
        \begin{align*}
		|E(H_0)\setminus E(H)|
		&< \frac{11\varepsilon}{40k} n^{k} + \binom{|A|}{l}\left(\rho' + \frac{11\varepsilon}{40}\right)n^{k-l}+\sum_{S\in \binom{A}{l}} d_{H[A]}(S)\\
		&\le \left(\frac{11\varepsilon}{40k} + \rho' + \frac{11\varepsilon}{40}\right) n^k + \binom{k}{l}e(H[A])\\
		&\le  \left(\frac{11}{120} + \frac{1}{8} + \frac{11}{40}\right) \varepsilon n^k + \binom{k}{l}\frac{\varepsilon n^k}{2 k!}\quad \mbox{(since $k\ge 3$ and $\rho'\le\varepsilon/8$)}\\
		&< \varepsilon n^k,
	\end{align*}
a contradiction. \qed

\medskip

We now use Lemma~\ref{thm2.2} to show that one
can control, with high probability, the independence number of
a subgraph of a $k$-graph induced by a random subset of vertices.
%whose edges are well distributed.

\begin{lemma}\label{indep}
        Let $c, \varepsilon', \alpha$ be positive reals and let $k,n$ be positive integers. Let $H$ be an $n$-vertex $k$-graph such that  $e(H)\ge cn^k$  and $e(H[S])\ge
        \varepsilon' e(H)$ for all $S\subseteq
        V(H)$ with $|S|\ge \alpha n$.
         Let $R\subseteq V(H)$ be obtained  by taking each vertex of
           $H$ uniformly at random with probability $n^{-0.9}$.
        Then for any positive real $\gamma \ll \alpha$, the independence number $\alpha(H[R])$ of $H[R]$ satisifes $\alpha(H[R])\le (\alpha +\gamma+o(1))n^{0.1}$
with probability at least $1-n^{O(1)}e^{-\Omega (n^{0.1})}$.
\end{lemma}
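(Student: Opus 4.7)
My plan is to apply the hypergraph container theorem (Lemma~\ref{thm2.2}) to $H$ with the increasing family
\[
\F := \{A \subseteq V(H) : |A| \ge \alpha n\}.
\]
Every $A \in \F$ has $|A| \ge \alpha v(H)$, and the hypothesis $e(H[S]) \ge \varepsilon' e(H)$ whenever $|S| \ge \alpha n$ says precisely that $H$ is $(\F, \varepsilon')$-dense. Fix a small constant $\delta$ with $0 < \delta < 1/10$ and set $p := n^{-1+\delta}$.

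For the degree hypothesis of Lemma~\ref{thm2.2}, note that $\Delta_l(H) \le \binom{n-l}{k-l} \le n^{k-l}/(k-l)!$ while $e(H)/v(H) \ge c n^{k-1}$, so for every $l \in [k]$,
\[
\frac{\Delta_l(H)}{e(H)/v(H)} \;\le\; \frac{n^{1-l}}{c\,(k-l)!} \;\le\; \frac{p^{\,l-1}}{c\,(k-l)!},
\]
since $p^{l-1} = n^{(l-1)(\delta - 1)} \ge n^{1-l}$. Thus the degree condition holds with some constant $c_0 = c_0(c, k)$. Lemma~\ref{thm2.2} then produces a constant $C = C(c, k, \varepsilon')$, a family $\S \subseteq \binom{V(H)}{\le Cpn}$, and functions $f : \S \to \overline{\F}$, $g : \I(H) \to \S$, such that $g(I) \subseteq I$ and $I \setminus g(I) \subseteq f(g(I))$ for every $I \in \I(H)$. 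In particular, every independent set of $H$ lies in $T \cup f(T)$ for some $T \in \S$, with $|T| \le Cn^\delta$ and $|f(T)| < \alpha n$.

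To bound $\alpha(H[R])$, fix any $T \in \S$. Since $|f(T)| < \alpha n$, the random variable $|f(T) \cap R|$ is stochastically dominated by $\mathrm{Bin}(\alpha n, n^{-0.9})$, which has mean at most $\alpha n^{0.1}$. By Chernoff (Lemma~\ref{chernoff}),
\[
\P\bigl(|f(T) \cap R| > (\alpha + \gamma/2)\, n^{0.1}\bigr) \le e^{-\Omega(n^{0.1})}.
\]
Since $|\S| \le \binom{n}{\le Cpn} \le n^{Cn^\delta + 1}$ and $Cn^\delta \log n = o(n^{0.1})$, a union bound shows that with probability $1 - n^{O(1)} e^{-\Omega(n^{0.1})}$ this estimate holds simultaneously for every $T \in \S$. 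Now take any independent set $I$ of $H[R]$; viewing $I$ as an independent set of $H$ contained in $R$, set $T := g(I) \in \S$. Then $T \subseteq I \subseteq R$ gives $|T \cap R| = |T| \le Cn^\delta$, and $I \setminus T \subseteq f(T)$, so
\[
|I| \;\le\; |T| + |f(T) \cap R| \;\le\; Cn^\delta + (\alpha + \gamma/2)\, n^{0.1} \;\le\; (\alpha + \gamma + o(1))\, n^{0.1}.
\]
The main technical point is calibrating $p$: it must be small enough that $Cpn = o(n^{0.1})$ (so that $|T|$ is negligible and the union bound over $|\S|$ is affordable), yet large enough to satisfy the $l$-degree conditions of Lemma~\ref{thm2.2}; the choice $p = n^{-1+\delta}$ with $\delta < 1/10$ achieves both.
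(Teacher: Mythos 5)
Your proof is correct and follows essentially the same route as the paper: apply the container theorem (Lemma~\ref{thm2.2}) to $H$, observe that each container has size at most $\alpha n + o(n^{0.1})$, then Chernoff plus a union bound over containers controls the intersection of $R$ with every container. Two small points worth noting: (i) the paper takes $p=n^{-1}$, which makes $|\S|$ polynomial in $n$ rather than your quasi-polynomial $n^{O(n^\delta)}$; both suffice since $\delta<0.1$, but $p=n^{-1}$ is cleaner and already satisfies the degree condition since $e(H)\ge cn^k$; (ii) since Lemma~\ref{thm2.2} uses a single parameter $\varepsilon$ for both the size floor $|A|\ge\varepsilon v(H)$ and the density $(\F,\varepsilon)$-dense, while you have two separate constants $\alpha$ and $\varepsilon'$, you should invoke the lemma with $\varepsilon:=\min(\alpha,\varepsilon')$ (the paper avoids this by building both conditions directly into the definition of $\F$).
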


   \pf
        Define $\F :=\left\{A\subseteq V(H) \ : \  e(H[A])\ge \varepsilon'
          e(H) \mbox{ and } |A|\ge \varepsilon' n\right\}$.
        Then $\F$ is an increasing family,  and $H$ is $(\F, \varepsilon')$-dense.
        Let $p=n^{-1}$ and $v(H)=n$. Then
        \[\Delta_l(H)\le \binom{n}{k-l}\le n^{k-l}\le c^{-1}n^{-l}e(H)=c^{-1} p^{l-1}\frac{e(H)}{v(H)}.
        \]
        Thus by Lemma~\ref{thm2.2}, there exist a constant $C$
        (depending only on $\varepsilon'$ and $c$),  a family $\S\subseteq \binom{V(H)}{\le C}$, a function $f: \S\to \overline{\F}$, and
        a family $\T:= \left\{F\cup S :  F\in f(\S) , S\in \S \right\}$,
        such that every independent set in $H$ is contained in some $T\in \T$.
        Since $\S\subseteq \binom{V(H)}{\le C}$, $|\S|\le C n^C$
        and, hence, $$|\T|=|\S| |f(\S)|\le |\S|^2\le C^2n^{2C}.$$

        We claim that $|T|< \alpha n+C$ for all $T\in \T$. To see this, let $T=F\cup S$ for some $F\in f(\S)$ and $S\in \S$. By definition, $F\in \overline{\F}$
        and, hence, $e(H[F])<\varepsilon' e(H)$. Since $e(H[S])\ge
        \varepsilon' e(H)$ for any $S\subseteq V(H)$ with $|S|\ge \alpha n$,
        we have $|F|<\alpha n$.
        Therefore,
           $|T|\le |F|+|S|<\alpha n +C.$
	
        We wish to apply Lemma~\ref{chernoff} and, hence, we need to make sets in
        $\T$ slightly larger.
        Take an arbitrary map $h : \T \to {V(H)\choose \lfloor \alpha n+C\rfloor}$ such
        that  $T\subseteq h(T)$ for all $T\in \T$, and let $\T'= h(\T)$.
        Then $$|\T'|\le |\T|\le |\S|^2\le C^2n^{2C}.$$
        Note that for each fixed $T'\in \T'$, we have $|R\cap T'|\sim Bi\left(|T'|, n^{-0.9} \right)$ and
	$\E (|R\cap T'|)=n^{-0.9}|T'| =\lfloor \alpha n +C\rfloor n^{-0.9}.$
        We apply Lemma~\ref{chernoff} to $|R\cap T'|$ by taking
        $\lambda= \gamma n^{0.1}$, where $\gamma$ is given and  $\gamma\ll\alpha$.
        Then \[\P\left(\big| |R\cap T'| - n^{-0.9}|T'|  \big| \ge \lambda  \right) \le e^{-\Omega(\lambda^2/ (n^{-0.9}|T'|)}= e^{-\Omega(n^{0.1})}.\]
        So with probability at most $e^{-\Omega(n^{0.1})}$, we have $|R\cap T'|\ge n^{-0.9}|T'|+\lambda$;
       hence, $|R\cap T'|\ge (\alpha +\gamma + C/n)n^{0.1}$ with probability at most $e^{-\Omega(n^{0.1})}$.

        Therefore,
        with
        probability at most $C^2n^{2C}e^{-\Omega(n^{0.1})}$ (from union bound), there exists some $T'\in \T'$ such that $|R\cap T'|\ge (\alpha +\gamma +C/n)n^{0.1}$.
        Hence, with probability at least $1-C^2n^{2C}e^{-\Omega(n^{0.1})}$,
        $|R\cap T'|<(\alpha+\gamma  +C/n)n^{0.1}$ for all $T'\in \T'$.

	 It remains to show that, conditioning on that $|R\cap T'|<(\alpha +\gamma+ C/n)n^{0.1}$ for all $T'\in \T'$,
        $|J|\le (\alpha +\gamma+C/n)n^{0.1}$ for every independent set $J$ in $H[R]$.
        Since such $J$ is also an independent set in $H$, there
        exist  $T\in \T$ and $T'\in \T'$ such that $J\subseteq T\subseteq T'$.
        Thus $J\subseteq R\cap T'$ and $|J|\le |R\cap T'|< (\alpha +\gamma+C/n)n^{0.1}$.

        Hence, $\alpha(H[R])\le (\alpha +\gamma+C/n)n^{0.1}$, with probability at least $1-C^2n^{2C} e^{-\Omega (n^{0.1})}$. \qed

\medskip

    The following result is the outcome of the first round of the two-round randomization procedure in \cite{AFHRRS12}.
    We summarize this round as a lemma (see the proof of
    Claim 4.1 in \cite{AFHRRS12}) and outline a proof, since we need to make some small modifications.  Here we adopt the notation in
    \cite{AFHRRS12}.

    \begin{lemma}\label{lem1-5}
        Let $k>d>0$ be integers with $k\ge 3$
       and let $H$ be a $k$-graph on $n$ vertices.
	Take $n^{1.1}$ independent copies of $R$ and denote them by $R^i$, $1\le i\le n^{1.1}$, where $R$ is chosen from $V(H)$ by taking each vertex uniformly at random with probability $n^{-0.9}$ and then deleting less than $k$ vertices  uniformly at random so that $|R|\in k\Z$.
        For each $S\subseteq V(H)$, let $Y_S:=|\{i: \ S\subseteq R^i\}|$ and $\bd^i_S:=|N_H(S)\cap R^i|$.
            Then with probability at least $1-o(1)$,  all of the following statements hold:
        \begin{itemize}
              \setlength{\itemsep}{0pt}
		\setlength{\parsep}{0pt}
		\setlength{\parskip}{0pt}
            \item [$(i)$] for every $v\in V(H)$, $Y_{\{v\}}=(1+o(1)) n^{0.2}$ %\sim n^{0.2}$,
            \item [$(ii)$] $Y_{\{u,v\}}\le 2$ for every pair $\{u, v\} \subseteq V(H)$,
            \item [$(iii)$] $Y_e\le 1$ for every edge $e \in E(H)$,
            \item [$(iv)$] for all $i= 1, \dots ,n ^{1.1}$, we have
              $|R^i| =(1+o(1))n^{0.1}$, and %\sim n^{0.1}$, and
            \item [$(v)$] %for each $\sigma>0$ there exists $\rho \ll \sigma$ such that
            if  $\mu, \rho'$ are constants with $0<\mu\ll \rho'$, $n/k-\mu n\le m\le n/k$, and
             $\delta_d(H)\ge {n-d\choose
                   k-d}-{n-d-m\choose k-d}-\rho' n^{k-l}$,  then
                for all $i= 1, \dots ,n ^{1.1}$ and all $D\in { V(H) \choose d}$ and for any positive real $\xi\ge 2\rho'$, we have
               $$\bd^{i}_D >  {|R^i|-d\choose k-d}-{|R^i|-d-|R^i|/k \choose k-d}-\xi |R^i|^{k-d}.$$
        \end{itemize}
    \end{lemma}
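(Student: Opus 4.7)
My plan is to verify (i)--(iv) by standard Chernoff and union bounds, and to handle (v) via Janson's inequality followed by an asymptotic comparison of binomial coefficients. For (iv), before trimming $|R'|\sim Bi(n, n^{-0.9})$ has mean $n^{0.1}$, so Lemma~\ref{chernoff} with $\lambda = n^{0.05}$ gives $|R^i| = (1+o(1))n^{0.1}$ with probability $1-e^{-\Omega(n^{0.1})}$, and a union bound over $i\in [n^{1.1}]$ preserves this. For (i), $Y_{\{v\}}$ is stochastically close to $Bi(n^{1.1}, n^{-0.9})$ with mean $n^{0.2}$ (the trimming changes the count by at most $1$ per copy), so the same Chernoff argument with $\delta = n^{-0.05}$ followed by a union bound over $v \in V(H)$ works. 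For (ii) and (iii) the union bound over triples/pairs of copies gives $\P[Y_{\{u,v\}}\ge 3]\le \binom{n^{1.1}}{3}(n^{-1.8})^3 = O(n^{-2.1})$ and $\P[Y_e\ge 2]\le \binom{n^{1.1}}{2}(n^{-0.9k})^2 = O(n^{2.2-1.8k})$; further union bounds over $\binom n 2$ pairs and at most $n^k$ edges give $o(1)$ total failure (using $k\ge 3$ for (iii)).

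For the main part (v), fix $i$ and $D\in \binom{V(H)}{d}$, and set $F := N_H(D)$, a $(k-d)$-graph with $|F| = d_H(D) \ge \delta_d(H) = \Theta(n^{k-d})$ by hypothesis. Let $R'$ denote the sample before trimming, so $R^i \subseteq R'$ with $|R' \setminus R^i| \le k-1$. Writing
\[
Z := |\{T \in F : T \subseteq R'\}|,
\]
we have $\bd^i_D \ge Z - (k-1)\binom{|R'|}{k-d-1} = Z - O(|R^i|^{k-d-1})$, and $Z = \sum_{T\in F} \prod_{v\in T}X_v$ where $X_v$ are independent Bernoulli$(p)$ variables with $p = n^{-0.9}$. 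A first-moment calculation gives $\E[Z] = |F|p^{k-d} = \Theta(n^{0.1(k-d)})$, while
\[
\Delta := \sum_{\substack{T,T'\in F\\ T\ne T',\, T\cap T'\ne \emptyset}}\E\left[\prod_{v\in T\cup T'}X_v\right] = \sum_{j=1}^{k-d}O(|F|\cdot n^{k-d-j}\cdot p^{2(k-d)-j})
\]
has dominant term $O(n^{0.2(k-d)-0.1})$ at $j=1$. Hence $\E[Z]^2/(\E[Z]+\Delta) = \Omega(n^{0.1})$, and the lower-tail Janson inequality yields $\P[Z\le (1-\gamma)\E[Z]]\le e^{-\Omega(n^{0.1})}$ for any fixed $\gamma>0$. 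A union bound over $i\in[n^{1.1}]$ and $D\in \binom{V(H)}{d}$ (at most $n^{1.1+d}$ events) keeps the failure probability $o(1)$.

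It remains to compare $(1-\gamma)\E[Z]$ with the claimed lower bound on $\bd^i_D$. Using the hypothesis,
\[
\E[Z] \ge \left(\binom{n-d}{k-d} - \binom{n-d-m}{k-d} - \rho' n^{k-d}\right) p^{k-d}.
\]
Asymptotic expansion with $|R^i|=(1+o(1))np$ gives $\binom{n-d}{k-d}p^{k-d} = \binom{|R^i|-d}{k-d}(1+o(1))$ and $\binom{n-d-m}{k-d}p^{k-d} = \binom{|R^i|-d-|R^i|/k}{k-d} + O(\mu)|R^i|^{k-d}$, the $O(\mu)$ term arising because $m/n - 1/k \in [-\mu, 0]$. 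Therefore
\[
\bd^i_D \ge \binom{|R^i|-d}{k-d} - \binom{|R^i|-d-|R^i|/k}{k-d} - (\rho' + O(\mu) + o(1))|R^i|^{k-d},
\]
and since $\mu \ll \rho'\le \xi/2$, the slack $\xi |R^i|^{k-d}$ absorbs all the error terms. The main obstacle is this final comparison: one must carefully track how the drift $m/n - 1/k$ and the random fluctuation of $|R^i|$ propagate through the binomial expansions, to ensure the total error stays strictly below $\xi |R^i|^{k-d} = \Theta(n^{0.1(k-d)})$. Everything else is standard randomization.
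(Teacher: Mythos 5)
Your proposal matches the paper's own proof in all essentials: Chernoff concentration for (i) and (iv), first-moment bounds for (ii) and (iii) (the paper phrases this via Markov's inequality applied to the counts $Z_2, Z_k$, but the underlying calculation $n^2(n^{1.1})^3(n^{-0.9})^6 = n^{-0.1}$ and $n^k(n^{1.1})^2(n^{-0.9})^{2k} = n^{2.2-0.8k}$ is exactly your union bound), and Janson's lower-tail inequality for (v) with $\Delta = O(n^{0.1(2(k-d)-1)})$, followed by the asymptotic binomial comparison in which $\mu \ll \rho' \le \xi/2$ absorbs the drift and $|R^i|$-fluctuation errors. The step you flag as the ``main obstacle'' is handled the same way in the paper, which conditions on (iv) and passes from the $n$-scale degree hypothesis to the $|R^i|$-scale inequality with an intermediate $1.5\rho'$ slack that your $\rho' + O(\mu) + o(1) < \xi$ bookkeeping reproduces.
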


\begin{proof} 	
	Note that the removal of less than $k$ vertices does not affect $(i)$ to $(iv)$.
    Also note that  $|Y_S| \sim Bi(n^{1.1},n^{-0.9|S|})$ for $S\subseteq V(H)$.

    Thus,  $\E(|Y_{\{v\}}|)=n^{0.2}$ for $v\in V(H)$, and it follows from Lemma~\ref{chernoff} that
      \[
      		\P\left(\big|Y_{\{v\}}- n^{0.2}\big|>n^{0.15} \right)\le e^{-\Omega(n^{0.1})}
	\] 	
	  Hence $(i)$ holds with probability at least $1- e^{-\Omega(n^{0.1})}$.
	
	  To prove $(ii)$, let
	  \[Z_2=\bigg| \left\{ \{u,v\}\in \binom{V(H)}{2}: Y_{\{u,v\}} \ge 3\right\}\bigg|, \]
	  and for $k\ge 3$, let
	  \[ \quad Z_k=\bigg| \left\{ S\in \binom{V(H)}{k}: Y_{S} \ge 2\right\}\bigg|.
	  \]
	  Then $\E(Z_2)<n^2(n^{1.1})^3(n^{-0.9})^6=n^{-0.1}$ and $\E(Z_k)<n^k(n^{1.1})^2(n^{-0.9})^{2k}=n^{2.2-0.8k}\le n^{-0.2}$ (for $k\ge 3$).
      By Markov's inequality,
	  \[\P(Z_2=0)>1-n^{-0.1} \mbox{ and, for $k\ge 3$, } \P(Z_k=0)>1-n^{-0.2}.\]
	  Thus $(ii)$ and $(iii)$ hold with probability at least $1-n^{-0.1}$ and $1-n^{-0.2}$, respectively.

         By Lemma~\ref{chernoff} (with $\lambda=n^{0.095}$), we have
         \[ \P \left(\big| |R^i|- n^{0.1}  \big| \ge n^{0.095} \right)\le e^{-\Omega(n^{0.09})}
         \]
         for each $i$. Thus by union bound, $(iv)$ holds with probability at least $1-n^{1.1} e^{-\Omega(n^{0.09})}$.

         Next, we prove $(v)$.
         Conditioning on $\big| |R^i|- n^{0.1}  \big| < n^{0.095}$ for all $i$ and using the assumption that ,
          $0<\mu \ll \rho'$, $n/k-\mu n \le m\le n/k$,
         and $n$ is large,
         we have
         \begin{align*}
                  &\ \left(\binom{n-d}{k-d}-{n-d-m \choose k-d}-\rho' n^{k-d}\right) (n^{-0.9})^{k-d} \\
          	\ge  &\ \binom{|R^i|-d}{k-d}-{|R^i|-d-|R^i|/k\choose k-d}-1.5\rho'|R^i|^{k-d}.
          \end{align*}
          So for each $D\in {V(H)\choose d}$
          and each fixed $R^i$,
          \begin{align*}
           \E(\bd^i_D) & = (1-o(1))d_H(D) (n^{-0.9})^{k-d}\\
                              &\ge (1-o(1)) \left(\binom{n-d}{k-d}-{n-d-m \choose k-d}-\rho' n^{k-d}\right) (n^{-0.9})^{k-d} \\
                             &\ge  (1-o(1)) \left(\binom{|R^i|-d}{k-d}-{|R^i|-d-|R^i|/k\choose k-d}-1.5\rho'|R^i|^{k-d}\right) \\
				& \ge \binom{|R^i|-d}{k-d}-{|R^i|-d-|R^i|/k\choose k-d}-1.8\rho'|R^i|^{k-d}.
          \end{align*}
                  In particular
         \[ \E(\bd^i_D)= \Omega(n^{0.1(k-d)}).\]

         We apply Janson's Inequality (Theorem 8.7.2 in \cite{AS08}) to bound the deviation of $\bd^i_D$.
         Write $\bd^i_D=\sum_{e\in N_H(D)} X_e$,
         where $X_e=1$ if $e\subseteq R^i$ and $X_e=0$ otherwise. Then
         \[\Delta = \sum_{e\cap f \ne \emptyset} \P(X_{e}=X_{f}=1)\le \sum_{l=1}^{k-d-1} p^{2(k-d)-l}\binom{n-d}{k-d}\binom{k-d}{l}\binom{n-k}{k-d-l}
         \]
         and, thus, $ \Delta = O(n^{0.1(2(k-d)-1)})$.
         By Janson's inequality, for any $\gamma>0$,
         \[\P(\bd^i_D\le(1-\gamma)\E(\bd^i_D))\le e^{-\gamma^2 \E(DEG^i_D)/(2+\Delta/\E(DEG^i_D))}=e^{-\Omega(n^{0.1})}.
         \]
         Since $\xi\ge 2\rho'$, by taking $\gamma$ small, the  union bound shows that, with probability
         at least $1-n^{d+1.1}e^{-\Omega(n^{0.1})}$,
         $$DEG_D^i\ge \binom{|R^i|-d}{k-d}-{|R^i|-d-|R^i|/k\choose k-d}-\xi|R^i|^{k-d}.$$

         Thus, it follows from (4) that $(v)$ holds with probability at least $$(1-n^{1.1}e^{-\Omega(n^{0.1})})
         (1- n^{d+1.1}e^{-\Omega(n^{0.1})})>1- (n^{1.1}+n^{d+1.1})e^{-\Omega(n^{0.1})}.$$

         Hence, it follows from  union bound that,  with probability at least
         \[1-e^{-\Omega(n^{0.1})}-n^{-0.1}-n^{-0.2}-n^{1.1}e^{-\Omega(n^{0.09})}-(n^{1.1}+n^{d+1.1})e^{-\Omega(n^{0.1})}=1-o(1),
          \]
 $(i)$-$(v)$ hold.
\end{proof}

   We summarize the second round randomization in  \cite{AFHRRS12}  as the following lemma (again, see the proof of Claim 4.1 in \cite{AFHRRS12}).

    \begin{lemma}\label{2-degree}
	   Assume $R^i$, $i=1,\ldots, n^{1.1}$, satisfy $(i)$-$(v)$ in
           Lemma~\ref{lem1-5}, and that each $R^i$ has a  perfect fractional matching $w^i$.
          Then there exists a spanning subgraph $H''$ of $H$ such that $\d_{H''}(v)=(1+o(1))n^{0.2}$ for each $v\in V$, and $\Delta_2(H'')\le n^{0.1}$.
    \end{lemma}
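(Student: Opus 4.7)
The plan is to build $H''$ by a second-round randomization on top of the sampled sets $R^i$. For each $i\in [n^{1.1}]$, independently include every edge $e\in E(H[R^i])$ into $H''$ with probability $w^i(e)$, where all choices are mutually independent. Property $(iii)$ of Lemma~\ref{lem1-5} guarantees that no edge of $H$ lies in more than one $R^i$, so this process unambiguously defines $H''$ as a spanning subgraph of $H$ (taking $V(H'') = V(H)$).

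For the vertex-degree condition, fix $v\in V(H)$. Conditional on the $R^i$'s, $d_{H''}(v)$ is a sum of independent Bernoulli variables whose mean is $\sum_{i\,:\, v\in R^i}\sum_{e\in H[R^i],\, e\ni v} w^i(e)$. Since $w^i$ is a perfect fractional matching on $H[R^i]$, the inner sum equals $1$ whenever $v\in R^i$, so $\E[d_{H''}(v)] = Y_{\{v\}} = (1+o(1))n^{0.2}$ by $(i)$. Applying Lemma~\ref{chernoff} with $\delta = n^{-0.05}$ yields $\P(|d_{H''}(v) - \E[d_{H''}(v)]| > n^{0.15}) \le e^{-\Omega(n^{0.1})}$, and a union bound over the $n$ vertices still gives failure probability $o(1)$, so $d_{H''}(v) = (1+o(1))n^{0.2}$ simultaneously for every $v$ with probability $1 - o(1)$.

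For the $2$-degree condition, fix $\{u,v\}\in \binom{V(H)}{2}$. The contribution to $d_{H''}(\{u,v\})$ from a single $R^i$ containing both $u$ and $v$ has conditional mean $\sum_{e\in H[R^i],\, e\supseteq \{u,v\}} w^i(e) \le \sum_{e\ni u} w^i(e) = 1$ by the fractional matching property at $u$. Property $(ii)$ bounds the number of such $R^i$ by $2$, so $\E[d_{H''}(\{u,v\})] \le 2$. Since $d_{H''}(\{u,v\})$ is again a sum of independent Bernoullis, the multiplicative Chernoff bound in the form $\P(X \ge t) \le (e\mu/t)^t$ for $t > \mu$ (with $\mu \le 2$ and $t = n^{0.1}$) yields $\P(d_{H''}(\{u,v\}) \ge n^{0.1}) \le (2e/n^{0.1})^{n^{0.1}}$; a union bound over the at most $\binom{n}{2}$ pairs then gives $\Delta_2(H'') \le n^{0.1}$ with probability $1 - o(1)$.

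The only point that needs care is controlling the conditional codegree mean contributed by each $R^i$, and this is exactly where the fractional matching property $\sum_{e\ni u} w^i(e) \le 1$ enters (items $(ii)$ and $(iii)$ then ensure that these contributions overlap in few $R^i$ and that no edge is double-counted). Combining the two high-probability events above via one final union bound produces an $H''$ with both desired properties, completing the proof.
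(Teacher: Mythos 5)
Your proof is correct and follows essentially the same second-round randomization argument as in Alon et al.~\cite{AFHRRS12}, which is precisely the source the paper defers to for this lemma (the paper itself states it without proof, citing Claim~4.1 of \cite{AFHRRS12}). Including each edge $e\in E(H[R^i])$ independently with probability $w^i(e)$, using $(iii)$ to avoid double-counting, $(i)$ plus the perfect fractional matching property to pin down $\E[d_{H''}(v)]=Y_{\{v\}}$, and $(ii)$ plus the fractional matching constraint at a single vertex to get $\E[d_{H''}(\{u,v\})]\le 2$, followed by Chernoff and union bounds, is exactly the intended argument.
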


\medskip

      We are now ready to show that for an $n$-vertex  $k$-graph $H$ satisfying the
      conditions of Theorem~\ref{main-thm} and not $\varepsilon$-close to $H_k^{k-l}(U,W)$ for any partition $U,W$ of $V(H)$ with $|W|=m$, after taking away an
      absorbing matching $M_a$ (from Lemma~\ref{Absorb-lem}), the
      resulting $k$-graph has an almost perfect matching.

\begin{lemma}\label{almostperfect}
      Let $k,l$ be integers such that $k\ge 3$ and $k/2\le l<k$.
            Let
      $\rho', \varepsilon, \sigma, \mu$ be positive reals such that $\rho'< \varepsilon^2(3k)^{-4(k-l)}/100$ and $\mu \le \varepsilon /40$.
      Let $n,m$ be sufficiently large integers such that $n/k-\mu n\le m\le n/k$.
      Suppose  $H$ is a $k$-graph on $n$ vertices such that $\delta_l(H)\ge
      {n-l\choose k-l}-{n-l-m\choose k-l} -\rho' n^{k-l}$, and $H$ is not $\varepsilon$-close to
       $H_k^{k-l}(U,W)$ for any partition $U,W$ of $V(H)$ with $|W|=m$. Then
$H$ contains a matching covering all but at most $\sigma n$ vertices.
\end{lemma}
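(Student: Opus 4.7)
The plan is to run the two-round randomization strategy of Alon, Frankl, Huang, R\"odl, Ruci\'nski, and Sudakov \cite{AFHRRS12} on $H$ itself, using the ingredients assembled in Section~4 to verify each of its hypotheses. First, since $H$ is not $\varepsilon$-close to $H_k^{k-l}(U,W)$ for any partition $U,W$ with $|W|=m$, Lemma~\ref{4.1} (applied with $\rho'$, $\varepsilon$, $\mu$ as given, noting $\rho'\le \varepsilon/8$ and $\mu\le\varepsilon/40$) gives that $H$ is $(\F,\varepsilon/(2k!))$-dense, where $\F=\{A\subseteq V(H)\ :\ |A|\ge(1-1/k-\varepsilon/4)n\}$. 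This is the structural input needed to run the hypergraph-container bound on random induced subgraphs of $H$.

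Next, pick a constant $\xi$ with $2\rho'\le \xi<(\varepsilon/5)^2(3k)^{-4(k-l)}$, which is possible by the hypothesis $\rho'<\varepsilon^2(3k)^{-4(k-l)}/100$. Apply Lemma~\ref{lem1-5} (with $d=l$ and this $\xi$) to produce $n^{1.1}$ random subsets $R^1,\dots,R^{n^{1.1}}$ of $V(H)$ satisfying (i)--(v); in particular each $|R^i|\in k\Z$, $|R^i|=(1+o(1))n^{0.1}$, and
\[
\delta_l(H[R^i])\ge \binom{|R^i|-l}{k-l}-\binom{|R^i|-l-|R^i|/k}{k-l}-\xi|R^i|^{k-l}.
\]
In parallel, apply Lemma~\ref{indep} to the density data from Lemma~\ref{4.1} with $\alpha=1-1/k-\varepsilon/4$, $\varepsilon'=\varepsilon/(2k!)$ and a small $\gamma$ (say $\gamma=\varepsilon/100$), and take a union bound over $i\in[n^{1.1}]$ to conclude that, with probability $1-o(1)$, every $R^i$ satisfies $\alpha(H[R^i])\le(\alpha+\gamma+o(1))n^{0.1}\le(1-1/k-\varepsilon/5)|R^i|$.

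Each $H[R^i]$ now satisfies the hypotheses of Lemma~\ref{4.4}: $|R^i|\in k\Z$ is large, $\alpha(H[R^i])\le(1-1/k-\varepsilon/5)|R^i|$, and the minimum $l$-degree exceeds the threshold (because of the choice of $\xi$). Therefore each $H[R^i]$ has a perfect fractional matching $w^i$. Plugging the $R^i$'s and $w^i$'s into Lemma~\ref{2-degree} yields a spanning subgraph $H''\subseteq H$ with $d_{H''}(v)=(1+o(1))n^{0.2}$ for every $v\in V(H)$ and $\Delta_2(H'')\le n^{0.1}$. Finally, apply the Frankl--R\"odl Lemma~\ref{Rodl} to $H''$ with $D=n^{0.2}$, a sufficiently small $\tau$, and error parameter $\sigma$; the resulting matching lies in $H''\subseteq H$ and covers all but at most $\sigma n$ vertices, as required.

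The only genuine obstacle is bookkeeping: ensuring that the single random collection $\{R^i\}$ simultaneously satisfies the degree lower bound needed by Lemma~\ref{4.4} and the independence upper bound needed by Lemma~\ref{4.4}, while also obeying the two-degree control needed by Lemma~\ref{2-degree}. The quantitative hypothesis $\rho'<\varepsilon^2(3k)^{-4(k-l)}/100$ is exactly what is required so that (v) of Lemma~\ref{lem1-5} delivers an $l$-degree strong enough to feed Lemma~\ref{4.4}; the ``not $\varepsilon$-close'' assumption is exactly what is needed for Lemma~\ref{4.1} to supply the density that the container method in Lemma~\ref{indep} demands; and the constraint $l\ge k/2$ enters precisely where Lemma~\ref{4.4} uses the stability form of Frankl's theorem (Lemma~\ref{stafrankl}).
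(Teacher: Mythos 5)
Your proposal is correct and follows essentially the same route as the paper: Lemma~\ref{4.1} for $(\F,\varepsilon/(2k!))$-density, Lemma~\ref{lem1-5} for the random family $\{R^i\}$ with controlled $l$-degree, Lemma~\ref{indep} to bound $\alpha(H[R^i])$, Lemma~\ref{4.4} for perfect fractional matchings in each $H[R^i]$, Lemma~\ref{2-degree} to produce the almost-regular subgraph $H''$, and finally Lemma~\ref{Rodl}. The only thing the paper makes explicit that you elide is the verification $e(H)\ge \binom{n}{l}\delta_l(H)/\binom{k}{l}\ge cn^k$ needed to invoke Lemma~\ref{indep}, but that is immediate from the degree hypothesis.
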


\begin{proof}
   By Lemma~\ref{4.1}, $e(H[S])\ge (\varepsilon/(2k!)) e(H)$
    for all $S\subseteq V(H)$ with $|S|\ge \alpha n$, where $\alpha=1-1/k-\varepsilon/4$.
    Note that $$e(H)=\delta_0(H)\ge
    \binom{n}{l}\delta_l(H)/\binom{k}{l}\ge cn^k,$$
    where $c>0$ is a constant and $c\ll 1/{k\choose l}$.

    Let $R, R^i$ be given as in Lemma~\ref{lem1-5}.
    Then it follows from  Lemma~\ref{indep} that, with  probability
    $1-o(1)$,    $H[R^i]$ has independence number $\alpha(H[R^i])\le
    (\alpha +o(1)+\gamma)n^{0.1}$ for all $i$,
    where $\gamma \ll \alpha$.
    Additionally by $(v)$ of Lemma~\ref{lem1-5},
    $\delta_l(H[R^i]) > {|R^i|-d\choose k-d}-{|R^i|-d-|R^i|/k \choose k-d}-\xi |R^i|^{(k-d)}$ for any $\xi \ge 2\rho'$.
     Thus by Lemma~\ref{4.4}, with probability $1-o(1)$, for each $i$, $H[R^i]$ has a perfect fractional matching.

 Hence by Lemma~\ref{2-degree}, $H$ has a spanning subgraph $H''$ such that $\d_{H''}(v)=(1+o(1))n^{0.2}$ for each $v\in V$, and $\Delta_2(H'')\le n^{0.1}$.
Thus we may apply Lemma~\ref{Rodl} to find a matching covering all but
at most $\sigma n$ vertices in $H''$, for sufficiently large $n$.
\end{proof}

%=======================================================================================================
\section{Conclusion}

      In this section, we complete the proof of Theorem~\ref{main-thm} and discuss some related work.

\medskip

        {\it Proof of Theorem~\ref{main-thm}}.
        By Lemmas~\ref{small-matching} and \ref{Phk}, we may assume that for any $0<\varepsilon<(8^{k-1}k^{5(k-1)}k!)^{-3}$, $H$ is not $\varepsilon$-close to $H_k^{k-l}(U,W)$ for any
          partition $U,W$ of $V(H)$ with  $|W|= m$.

        By Lemma~\ref{Absorb-lem}, there exist constants $c'= c'(k,l)$ and $\rho= \rho(c',k,l,\varepsilon)$ small enough,
        such that for positive integers $a,h$ satisfying $h\le l$, $a\le k-l$, and $al\ge a(k-l)+(k-h)$,
       there exists a matching $M_a$ of size at most $2k\rho n$ with the following property:
	    For any subset $S\subseteq V(H)$ with $|S|\le c'\rho n$,
        $H[V(M_a)\cup S]$ has a matching covering all but at most $al+h-1$ vertices.

          Now consider $H_1=H-V(M_a)$. Then $\delta_{l}(H_1)\ge \delta_l(H)-(2k^2\rho n)n^{k-l-1}$. Let $\rho_1= 4k^2 \rho$ and
          $n_1=n-k|M_a|$.
         % rename $H', n-k|M_a|,\varepsilon/2$ as $H,n,\varepsilon$, respectively.
          Then $$\delta_l(H_1)\ge {n_1-l\choose k-l}-{n_1-l-m \choose k-l}-\rho_1 n_1^{k-l}$$
           and  $H_1$ is not $(\varepsilon/2)$-close to $H_k^{k-l}(U,W)$ for any
          partition $U,W$ of $V(H_1)$ with $|W|= m$, since $n$ is large enough and $\rho\ll \varepsilon$.

          By Lemma~\ref{almostperfect}, $H_1$ has a matching $M_1$ such that $|V(H_1)\setminus V(M_1)|<c'\rho n_1\le c'\rho n$.
          Then there exists a matching $M_2$ in
          $H_2:=H[V(M_a)\cup (V(H_1)\setminus V(M_1))]$ such that $|V(H_2)\backslash V(M_2)|\le al+h-1$.

          Now $M_1\cup M_2$ is a matching in $H$ covering all but at most $al+h-1$ vertices of $H$.
       By taking $a=\lceil(k-l)/(2l-k)\rceil$ and $h=k-a(2l-k)$, which  minimizes $al+h-1$, we see that $M_1\cup M_2$ is the desired matching. \qed

\medskip

There are two places in the proof of Theorem~\ref{main-thm} where we require $l>k/2$: Lemma~\ref{Absorb-lem} for absorbing matching and Lemma~\ref{4.4} for perfect fractional
matchings. We do not know how to derive such results for $l\le
k/2$. However, for $k=3$ and $l=1$, the absorbing part can be taken care of by
% However, Jie Han informed us that some avsorbing results of his can extend this range to $l>k/3$.
the following  result of H\'an, Person, and Schacht
\cite{HPS09}.

\begin{lemma}[H\`an, Person, and Schacht]\label{Ab-HPS}
Given any $\gamma>0$, there exists an integer $n_0=n_0(\gamma)$ such that
the following holds. Suppose that $H$ is a 3-graph on $n\geq n_0$ vertices such that $\delta_1(H)\geq (1/2+2\gamma){n\choose 2}$. Then there is a matching $M$ in $H$ of size $|M|\leq \gamma^3n/3$ such that for every set $V'\subseteq V(H)-V(M^*)$
with $|V'|\leq \gamma^6n $,  there is a matching in $H$ covering precisely the vertices in  $V'\cup V(M^*)$.
\end{lemma}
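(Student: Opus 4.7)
My plan is to follow the standard absorbing lemma template of R\"odl, Ruci\'nski, and Szemer\'edi, which is also the template used in Lemma~\ref{Absorb-lem} of this paper. The argument proceeds in three stages: (i) define a local absorber for each triple of vertices, (ii) show every triple has polynomially many absorbers, and (iii) select a small random subfamily that simultaneously works for all triples.

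First I would define absorbers. For a triple $T=\{a,b,c\}\in \binom{V(H)}{3}$, call a 6-set $A=\{x_1,y_1,z_1,x_2,y_2,z_2\}\subseteq V(H)\setminus T$ a \emph{$T$-absorber} if the five triples $\{x_1,y_1,z_1\}, \{x_2,y_2,z_2\}, \{a,x_1,x_2\}, \{b,y_1,y_2\}, \{c,z_1,z_2\}$ all belong to $E(H)$. The key feature is that $H[A]$ admits a perfect matching (the first two triples) and $H[A\cup T]$ admits a perfect matching (the last three triples). Let $\mathcal{L}(T)$ be the collection of $T$-absorbers. Writing $\F'=\{f_1,\ldots, f_p\}$ as a disjoint family of absorbing 6-sets, a corresponding 2-matching inside each member will be the absorbing matching $M$ we output; then given $V'=\bigcup_iT_i$ with the $T_i$ pairwise disjoint triples, we greedily absorb each $T_i$ by swapping the 2-matching inside some unused $f_j\in \F'\cap \mathcal{L}(T_i)$ with the corresponding 3-matching on $f_j\cup T_i$.

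The content to verify is the lower bound $|\mathcal{L}(T)|=\Omega(n^6)$ for every triple $T$. This is where the hypothesis $\delta_1(H)\ge (1/2+2\gamma)\binom{n}{2}$ enters. The count is a straightforward five-step extension: pick an edge $\{x_1,y_1,z_1\}\in E(H)$ (there are $\Omega(n^3)$ choices since $e(H)\ge n\delta_1(H)/3$); then pick $x_2$ so that $\{a,x_1,x_2\}\in E(H)$ (a defect version of this, based on $d(a)\ge (1/2+2\gamma)\binom{n}{2}$, shows that ``most'' choices of $x_1$ give $\Omega(n)$ valid $x_2$), and analogously $y_2$ and $z_2$. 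At each step we lose at most $O(n^{\text{prev}})$ tuples that fail the degree inequality, but the minimum-degree condition implies on average enough slack to retain a $\gamma$-fraction of the leading count, yielding $c_\gamma n^6$ absorbers for some $c_\gamma>0$. The main obstacle is bookkeeping across these five dependent choices to ensure that the degree losses at each step (controlled by $\delta_1$, not by codegree, which could be small at specific pairs) accumulate to something strictly less than the leading term; a clean way to handle this is via an averaging argument summing over permutations of $(x_i,y_i,z_i)$.

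Finally I would run the standard probabilistic selection argument, identical in spirit to the one in Lemma~\ref{Absorb-lem}. Include each element of $\binom{V(H)}{6}$ in a random family $\F$ independently with probability $p=\gamma^3 n/\binom{n}{6}$ (chosen so $\E|\F|=\gamma^3 n$). Chernoff bounds plus a union bound over triples $T$ give, with probability $1-o(1)$: $|\F|\le 2\gamma^3 n$ and $|\mathcal{L}(T)\cap \F|\ge c_\gamma\gamma^3 n/2$ for every $T$. Markov's inequality gives that, with probability at least $1/2$, the number of intersecting pairs in $\F$ is $O(\gamma^6 n)$. Delete one 6-set from each intersecting pair and any 6-set that is not itself spanned by a perfect matching; the resulting $\F'$ is disjoint and still satisfies $|\mathcal{L}(T)\cap \F'|\ge c_\gamma \gamma^3 n/4\ge \gamma^6 n$ for all triples $T$. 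The induced matching $M$ has $|M|\le 2|\F'|\le \gamma^3 n/3$ (after adjusting constants), and the greedy absorption step above succeeds for any $V'$ with $|V'|\le\gamma^6 n$ and $|V'|\equiv 0\pmod 3$, since we consume at most $\gamma^6 n/3$ absorbers, well within the supply.
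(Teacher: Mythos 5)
This lemma is not proved in the paper: it is imported verbatim from H\`an, Person, and Schacht~\cite{HPS09}, so there is no ``paper's own proof'' to compare against. The paper even explicitly flags the relevant difficulty just above the statement: the absorbing construction of Lemma~\ref{Absorb-lem} (which your proposal emulates) only works for $l>k/2$, and the authors ``do not know how to derive such results for $l\le k/2$,'' which is precisely why they cite HPS for the case $k=3$, $l=1$.

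Your proposed proof has a genuine gap in the absorber count. You define a $T$-absorber as a $6$-set $\{x_1,y_1,z_1,x_2,y_2,z_2\}$ with all five of $\{x_1,y_1,z_1\},\ \{x_2,y_2,z_2\},\ \{a,x_1,x_2\},\ \{b,y_1,y_2\},\ \{c,z_1,z_2\}$ in $E(H)$, and then count by (i) choosing an edge $\{x_1,y_1,z_1\}$, and (ii) choosing $x_2,y_2,z_2$ one at a time via the link conditions $\{a,x_1,x_2\},\ \{b,y_1,y_2\},\ \{c,z_1,z_2\}\in E$. This procedure never enforces the constraint $\{x_2,y_2,z_2\}\in E(H)$. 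After $x_2$ and $y_2$ are fixed, the number of admissible $z_2$ depends on the codegree $d_H(\{x_2,y_2\})$, and $\delta_1(H)\ge (1/2+2\gamma)\binom n2$ gives no lower bound whatsoever on any individual codegree --- it can be zero. Nor does a straightforward union bound rescue the count: the number of ordered $6$-tuples satisfying the three link constraints is roughly $(1+4\gamma)^3 n^6/8$, while the number of $6$-tuples satisfying the link constraints but violating $\{x_1,y_1,z_1\}\in E$ is, a priori, only bounded by ``$\#$(ordered non-edges) $\times$ $(\max_v\deg_{L_a}(v))(\max_v\deg_{L_b}(v))(\max_v\deg_{L_c}(v)) \approx (1/2-2\gamma)n^3\cdot n^3$,'' which already exceeds the main term; the same is true for the $\{x_2,y_2,z_2\}$ constraint, so subtracting both leaves a negative quantity. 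In the parity example ($V=V_0\cup V_1$, edges $=$ triples meeting $V_1$ an odd number of times, with $\delta_1\approx \binom n2/2$), your gadget has {\em zero} instances for any triple $T\subseteq V_0$, illustrating that the putative $\Omega(n^6)$ bound is not a soft consequence of the degree hypothesis. The ``averaging over permutations'' remark does not supply the needed argument, because the loss is structural (misalignment of the link graphs with $E(H)$), not an artifact of one particular ordering of the choices. The actual HPS proof uses a different, more intricate gadget that is built specifically to exploit the fact that any two link graphs of density $>1/2$ must share $\Omega(n^2)$ common pairs --- a codegree-like fact that is the real content hidden behind the constant $1/2$ --- and that structural input is missing from your argument.

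The random-selection phase of your proposal (Chernoff plus a union bound over triples, Markov for intersecting pairs, then pruning to get a disjoint family $\F'$) is fine and is indeed the standard step; the problem is entirely in establishing $|\mathcal L(T)|=\Omega(n^6)$ for your particular gadget.
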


For the perfect fractional matching part, we need a
result of Berge \cite{Be58} on maximum matchings.
 For a graph $G$, we use $c_o(G)$ to denote the number of odd components in $G$.

\begin{lemma}[Berge] \label{graphmatching}
Let $G$ be a graph on $n$ vertices. Then
$$\nu(G)=\min\left\{\left(n-c_o(G-W)+|W|\right)/2  :
W\subseteq V(G)\right\}.$$
\end{lemma}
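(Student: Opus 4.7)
The plan is to prove both directions of the Tutte--Berge identity. Write $f(W) := (n - c_o(G-W) + |W|)/2$ for brevity, so the claim is $\nu(G) = \min_{W\subseteq V(G)} f(W)$.

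First I would establish the easy inequality $\nu(G) \le f(W)$ for every $W \subseteq V(G)$ by a one-line parity count. Fix any matching $M$ and any $W$. In each odd component $C$ of $G-W$, the matching $M$ cannot saturate all of $C$ using edges inside $C$, so at least one vertex of $C$ is either $M$-unsaturated or is matched by an edge of $M$ going into $W$. Since at most $|W|$ components of $G-W$ can be matched into $W$ (different components require distinct $W$-vertices), at least $c_o(G-W) - |W|$ vertices of $V(G)$ are $M$-unsaturated. Therefore $n - 2|M| \ge c_o(G-W) - |W|$, i.e.\ $|M| \le f(W)$; maximizing over $M$ gives $\nu(G) \le \min_W f(W)$.

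For the reverse inequality I would produce an explicit $W$ attaining equality, using the Gallai--Edmonds decomposition. Let $D \subseteq V(G)$ be the set of vertices missed by at least one maximum matching of $G$, put $A := N(D) \setminus D$, and $C := V(G) \setminus (A \cup D)$. The Gallai--Edmonds structure theorem gives: every component of $G[D]$ is factor-critical (in particular of odd order); $G[C]$ has a perfect matching; and every maximum matching of $G$ saturates all of $A \cup C$, matches the vertices of $A$ to distinct components of $G[D]$, and leaves exactly $c(G[D]) - |A|$ vertices of $D$ unsaturated. Taking $W := A$, the components of $G - W$ consist of the even components of $G[C]$ together with the odd components of $G[D]$, so $c_o(G-W) = c(G[D])$. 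On the other hand $n - 2\nu(G) = c(G[D]) - |A|$, and substituting gives $f(A) = (n - c(G[D]) + |A|)/2 = \nu(G)$. Combined with the first step this proves the lemma.

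If one does not wish to cite Gallai--Edmonds, the same conclusion can be reached by reducing to Tutte's $1$-factor theorem: attach $d := n - 2\nu(G)$ new universal vertices to $G$ to form $G^*$, observe that $G^*$ has a perfect matching while removing any universal vertex destroys this property, apply Tutte's condition to an extremal Tutte set in $G^*$, and translate the resulting barrier back to $G$. Either route places all of the substance in the hard direction: the inequality $\nu(G) \le \min_W f(W)$ is a trivial parity count, whereas exhibiting the minimizing $W$ requires genuine structural information about the family of maximum matchings of $G$ — this is the main obstacle, and it is precisely what Gallai--Edmonds (or, equivalently, a careful analysis of $M$-alternating trees rooted at unsaturated vertices) is designed to supply.
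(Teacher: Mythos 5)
Your proof is correct. Note, however, that the paper does not actually prove this lemma: it is the classical Tutte--Berge formula, and the paper simply states it and cites Berge's 1958 note \cite{Be58} as a black box (as is standard, since the result long predates the paper). Your two-step argument is the standard one and is sound: the parity count giving $\nu(G)\le (n-c_o(G-W)+|W|)/2$ for every $W$ is exactly right (each odd component either contributes an unsaturated vertex or sends a matching edge to a distinct vertex of $W$), and invoking the Gallai--Edmonds decomposition with $W=A$ correctly produces an extremal set, since $c_o(G-A)=c(G[D])$ and $n-2\nu(G)=c(G[D])-|A|$. Your remark that one can alternatively deduce the formula from Tutte's $1$-factor theorem by adding $n-2\nu(G)$ universal vertices is also the usual self-contained route and works. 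So the proposal is a complete and correct proof of a statement the paper only cites; it is not ``the same approach as the paper'' because the paper offers no proof to compare against.
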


\begin{lemma}\label{3-graph-frac}
Let $c, \rho$ be constant such that $0<\rho\ll 1$ and $0<c<1/2$, and let $m, n$ be positive integers such that
$n$ is sufficiently large and $cn\leq m\leq n/2-1$. Let $G$ be a 2-graph with $V(G)=[n]$ such that  $\nu(G)\leq m$ and
$G$ is stable with respect to the natural order on $[n]$.  If $e(G)> {n\choose
  2}-{n-m\choose 2}-\rho n^2$, then $G$ is $2\sqrt{\rho}$-close to $H_2^2([n]\setminus[m],[m])$.
\end{lemma}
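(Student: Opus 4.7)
My plan is to adapt the proof of Lemma~\ref{stafrankl} to the graph case $k=2$, using Berge's formula (Lemma~\ref{graphmatching}) as the graph-theoretic tool that replaces the random-sampling step; it is this substitution that allows the range of $m$ to be extended from the original $m\le n/(2k)=n/4$ all the way to $m\le n/2-1$. Let $r_i$ denote the largest upper neighbor of $i$ in $G$ and set $t:=d_{G-[m]}(m+1)$. By stability the upper degree of $m+i$ in $G-[m]$ is at most $t+1-i$, yielding the clean bound $e(G-[m])\le\binom{t+1}{2}$. Combining this with the identity $|E(H_2^2([n]\setminus[m],[m]))\setminus E(G)| = e(H_2^2)-e(G)+e(G-[m])$ and the hypothesis, I obtain
\[
|E(H_2^2)\setminus E(G)| \;<\; \rho n^2 + e(G-[m]) \;\le\; \rho n^2 + \binom{t+1}{2}.
\]

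In Case 1, when $t\le\rho^{1/4}n$, the right-hand side is at most $\rho n^2 + \sqrt{\rho}\,n^2 + O(n)$, which is $\le 2\sqrt{\rho}\,n^2$ for $n$ large and $\rho<1$, proving the lemma directly.

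In Case 2, when $t>\rho^{1/4}n$, I would derive a contradiction with $\nu(G)\le m$. By stability, the edge $\{m+1,m+1+t\}\in E(G)$ forces every pair $\{a,b\}$ with $a\le m+1$ and $b\le m+1+t$ to be an edge, so $[m+1]$ is a clique completely joined to $\{m+2,\ldots,m+1+t\}$. The clique-plus-bipartite structure on $[m+t+1]$ alone produces a shifted matching of size $\lfloor(m+t+1)/2\rfloor$, which beats $m$ only when $t\ge m$; to close the remaining gap I would augment this matching using an edge from $G-[m]$ together with Berge's formula applied to a suitable barrier set $W=[s]$, exploiting the slack $n-2m\ge 2$ coming from $m\le n/2-1$ to produce a matching of size $m+1$. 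The main technical obstacle I anticipate is making this augmentation quantitatively valid across the full stated range $cn\le m\le n/2-1$: the Erd\H{o}s--Gallai extremal graph for $\nu(G)\le m$ is $H_2^2([n]\setminus[m],[m])$ only for $m<(2n-3)/5$, past which the clique $K_{2m+1}$ dominates, so some smallness condition on $\rho$ relative to $c$ is implicit in the statement and must be tracked to keep the conclusion of closeness to $H_2^2$ rather than closeness to $K_{2m+1}$.
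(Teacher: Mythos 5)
Your Case 1 is correct, and it is a nice shortcut: stability places every edge of $G-[m]$ inside $\{m+1\}\cup N_{G-[m]}(m+1)=\{m+1,\dots,m+t+1\}$, so $e(G-[m])\le\binom{t+1}{2}$, and with the identity $|E(H_2^2([n]\setminus[m],[m]))\setminus E(G)|=e(H_2^2)-e(G)+e(G-[m])<\rho n^2+e(G-[m])$ this settles $t\le\rho^{1/4}n$. The genuine gap is Case 2: the contradiction you want to extract from ``stable, $\nu(G)\le m$, $t>\rho^{1/4}n$'' via the clique-join structure, an extra edge of $G-[m]$, Berge's formula with a barrier $[s]$, and the slack $n-2m\ge 2$ does not exist. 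Take $G_0$ to be the clique on $[2m+1]$ together with $n-2m-1$ isolated vertices: $G_0$ is stable, $\nu(G_0)=m$, and $t=d_{G_0-[m]}(m+1)=m>\rho^{1/4}n$, yet $G_0$ has no matching of size $m+1$ --- every edge of $G_0-[m]$ lies inside the clique, which a maximum matching already saturates up to one vertex, and the slack vertices are isolated, so no augmentation and no choice of barrier can help. Hence Case 2 cannot be closed without re-invoking the edge lower bound $e(G)>\binom{n}{2}-\binom{n-m}{2}-\rho n^2$, and once you do that you are forced into an Erd\H{o}s--Gallai type count, which is essentially the paper's argument rather than a matching augmentation.

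Your closing worry is also more serious than ``a smallness condition on $\rho$ relative to $c$ to be tracked'': for $(2n-3)/5<m<n/2-4\sqrt{\rho}\,n$ (a nonempty range once $\rho$ is small and $n$ is large) the same $G_0$ satisfies $e(G_0)=\binom{2m+1}{2}>\binom{n}{2}-\binom{n-m}{2}$, so it meets every hypothesis of the lemma, while $|E(H_2^2([n]\setminus[m],[m]))\setminus E(G_0)|=m(n-2m-1)>2\sqrt{\rho}\,n^2$; no proof can cover the full stated range, and no smallness of $\rho$ repairs this --- the upper bound on $m$ must come down to roughly $2n/5$. For comparison, the paper does not case on $t$: it takes a maximal Berge barrier $W$ with $\nu(G)=(n-c_o(G-W)+|W|)/2$, uses stability to show every component of $G-W$ except the largest one $C_1$ is a singleton, deduces $|W|\le m-(c_1-1)/2$ and $e(G)\le\binom{n}{2}-\binom{n-|W|}{2}+\binom{c_1}{2}$, and lets the edge lower bound force $c_1<\sqrt{\rho}n$, after which every edge meets $[m+\sqrt{\rho}n/2]$; note that this last deduction is exactly where $m\lesssim 2n/5$ is really needed (when $c_1-1$ can be as large as $2m$ the quadratic there is again below $\rho n^2$), and the lemma is only ever applied with $m$ about $n/3$. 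If you restrict to $m\le(2/5-\delta)n$ and, in your Case 2, play the structural information against the edge count (or simply run the paper's computation), your outline becomes a complete proof.
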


\pf  Since $G$ is stable, we have
\begin{itemize}
\item [(1)] $N_G(i)\setminus \{j\}\subseteq N_G(j)\setminus \{i\}$ for any $i,j\in [n]$ with $i>j$.
\end{itemize}

 By Lemma \ref{graphmatching}, there exists $W\subseteq V(G)$ such that
$$\nu(G)=\left(n-c_o(G-W)+|W|\right)/2.$$
We choose maximal such $W$, and
let $C_1,\ldots, C_q$ denote the components of $G-W$. Without loss of
generality, assume $|V(C_1)|\geq \cdots\geq |V(C_q)|$, and let
$c_i:=|V(C_i)|$ for $i\in [q]$.
Then
\begin{itemize}
\item [(2)] $q=c_o(G-W)$, i.e.,  $c_i$ is odd for all $i\in [q]$.
\end{itemize}
For, otherwise, suppose that $c_i$ is even for some $i\in [q]$.
Let $x\in V(C_i)$ and $W':=W\cup \{x\}$. Then $c_o(G-W')\ge c_o(G-W)+1$. This forces
$\left(n-c_o(G-W)+|W|\right)/2 =\left(n-c_o(G-W')+|W'|\right)/2$, as
$\nu(G)=\left(n-c_o(G-W)+|W|\right)/2$. But then,
$W'$ contradicts the choice of $W$. $\Box$

\begin{itemize}
\item [(3)] $c_i=1$ for $i=2, \ldots, q$.
\end{itemize}
For, suppose   $c_2\geq 2$.  Then $c_1\geq c_2\geq 2$; so
there exist $a_1b_1\in E(C_1)$ and $a_2b_2\in E(C_2)$. If
$a_1> a_2$ then $a_1b_2\in E(G)$ by (1), and  if $a_1<a_2$ then $b_1a_2\in
E(G)$ by (1). So  there is edge between $C_1$ and $C_2$,
contradicting the fact that $C_1$ and $C_2$ are different components
of $G-W$. $\Box$

\medskip

By (3), we have
\begin{align*}
m\geq \nu(G)&=\left(n-(c_o(G-W)-|W|)\right)/2\\
&=\left((c_1+|W|+q-1)-(q-|W|)\right)/2\\
&=(c_1-1)/2+|W|.
\end{align*}
Thus $|W|\le m-(c_1-1)/2$. Hence,
%\begin{align*}
% {n-1\choose 2}-{n-m\choose 2}-\rho n^2\leq
$$e(G)  \leq  {n\choose 2}-{n-|W|\choose 2}+{c_1\choose 2}\leq {n\choose 2}-{n-m+(c_1-1)/2\choose 2}+{c_1\choose 2}.$$
 %\quad \mbox{(by (\ref{m=c1+W}))},
%\end{align*}
Since $e(G)>{n\choose 2}-{n-m\choose 2}-\rho n^2$, we have
\begin{align*}
{n-m\choose 2}+\rho n^2 & >  {n-m+(c_1-1)/2\choose 2}-{c_1\choose 2}\\
&={n-m\choose 2}+\frac{1}{8}(c_1-1)^2+\frac{1}{4}(c_1-1)(2n-2m-1)-{c_1\choose 2},
\end{align*}
which gives
\begin{align*}
-\frac{3}{8}(c_1-1)^2+\frac{1}{4}(c_1-1)(2n-2m-3)< \rho n^2.
 \end{align*}
Hence, $c_1< \sqrt{\rho}n$, since $\rho \ll 1$ and $m\le n/2-1$

%By (\ref{m=c1+W}), (\ref{C1-upbound}) and Fact 1, one can see that
Note that every edge of $G$ intersects $W\cup V(C_1)$. So by (1), every edge of $G$ intersects $[|W|+c_1] \subseteq [m+(c_1+1)/2]\subseteq [m+\sqrt{\rho}n/2]$.
Since $e(G)> {n\choose
  2}-{n-m\choose 2}-\rho n^2$, we have
\begin{align*}
|E(H_2^2([n]\setminus[m],[m]))\backslash E(G)| & \leq 2\sqrt{\rho}n^2.
\end{align*}
This completes the proof of the lemma. \qed

\medskip

Thus, using Lemma~\ref{3-graph-frac} instead of Lemma~\ref{stafrankl} in the end of the
proof of (3) for Lemma~\ref{4.4}, we see that Lemma~\ref{4.4} holds in the case
when $k=3$ and $l=1$.
Thus, our approach (using Lemma~\ref{Ab-HPS}
instead of Lemma~\ref{Absorb-lem}) gives an
alternative proof of the following result of K\"{u}hn, Osthus, and
Townsend \cite{KOT13} (and independently by Khan\cite{Kh13}) on
perfect matchings in 3-graphs.

\begin{theorem} [K\"{u}hn, Osthus, and
Townsend; Khan] \label{thm-KOT}
There exists  $n_0\in \mathbb{N}$ such that if $H$ is a 3-graph of order $n\geq n_0$, $m\le n/3$,
and $\delta_1(H)>{{n-1}\choose 2}-{{n-m}\choose 2},$
then $\nu(H)\ge m$.
\end{theorem}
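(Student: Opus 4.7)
The plan is to mimic the proof of Theorem~\ref{main-thm} specialized to $(k,l)=(3,1)$, with two substitutions dictated by the failure of the $l>k/2$ hypothesis. The first substitution is to use Lemma~\ref{Ab-HPS} in place of Lemma~\ref{Absorb-lem} to produce the absorbing matching. Its hypothesis $\delta_1(H)\ge (1/2+2\gamma)\binom{n}{2}$ is met when $m$ is close to $n/3$, since $\binom{n-1}{2}-\binom{n-m}{2}$ evaluated at $m=n/3$ is about $\tfrac{5}{18}n^2>\tfrac{1}{4}n^2$. The second substitution is to invoke Lemma~\ref{3-graph-frac} in place of Lemma~\ref{stafrankl} inside the proof of Lemma~\ref{4.4}. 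Lemma~\ref{3-graph-frac} extends the stability range from $m\le n/(2k)$ to $m\le n/2-1$ in the two-graph case, and as the author records in the remark just before the theorem, this is exactly what is needed to rescue Lemma~\ref{4.4} for $(k,l)=(3,1)$.

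With these substitutions the argument runs parallel to the proof of Theorem~\ref{main-thm}. If $m\le n/162$, Lemma~\ref{small-matching} (which does not require $l>k/2$) already gives $\nu(H)\ge m+1$. Otherwise fix small $\varepsilon$ and split on whether $H$ is $\varepsilon$-close to $H_3^2(U,W)$ for some partition with $|W|=m$. In the close case, Lemma~\ref{Phk} applied at $(k,l)=(3,1)$ yields $\nu(H)\ge m+1$ whenever $m\le n/3-1$; for $m=n/3$ one runs the same cleanup (matchings $M_1,M_{21},M_{22},M_{23}$ covering $\sqrt{\varepsilon}$-bad vertices) but finishes via Lemma~\ref{good}, whose range is $m\le n/k$, to produce a perfect matching. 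In the far-from-extremal case, apply Lemma~\ref{Ab-HPS} to obtain a small absorbing matching $M^*$, form $H_1=H-V(M^*)$, apply the $(3,1)$-version of Lemma~\ref{almostperfect} to find a matching $M_1$ in $H_1$ missing at most $\gamma^6 n$ vertices, and finish by using $M^*$'s absorbing property to cover the leftover vertices together with $V(M^*)$ and produce a matching of size at least $m$.

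The main obstacle is justifying that Lemma~\ref{almostperfect}, although stated under $k/2\le l<k$, continues to hold at $(k,l)=(3,1)$. Inspecting its proof, the hypothesis $l\ge k/2$ is used only through the invocation of Lemma~\ref{4.4}; every other ingredient (the density estimate of Lemma~\ref{4.1}, the container-based independence bound of Lemma~\ref{indep}, the two-round randomization of Lemmas~\ref{lem1-5}--\ref{2-degree}, and the Frankl--R\"odl Lemma~\ref{Rodl}) is valid for any $l\in[k-1]$. Once Lemma~\ref{4.4} is secured at $(3,1)$ via Lemma~\ref{3-graph-frac}, Lemma~\ref{almostperfect} carries over verbatim, and the whole proof of Theorem~\ref{main-thm} specializes. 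A secondary technicality is checking that the hypothesis of Lemma~\ref{Ab-HPS} holds throughout the relevant range: in the far-from-extremal case this follows because $H$ not being close to $H_3^2(U,W)$ combined with $\delta_1(H)>\binom{n-1}{2}-\binom{n-m}{2}$ forces enough edge density to give $\delta_1(H)\ge (1/2+2\gamma)\binom{n}{2}$ with a uniform $\gamma>0$, so the absorbing step can be invoked across the required range of $m$.
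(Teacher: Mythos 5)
Your two substitutions — Lemma~\ref{Ab-HPS} in place of Lemma~\ref{Absorb-lem}, and Lemma~\ref{3-graph-frac} in place of Lemma~\ref{stafrankl} inside the proof of Lemma~\ref{4.4} — are exactly what the paper prescribes, and your observation that every other ingredient of Lemma~\ref{almostperfect} (Lemmas~\ref{4.1}, \ref{indep}, \ref{lem1-5}, \ref{2-degree}, \ref{Rodl}) is indifferent to the restriction $l\ge k/2$ is correct. So the core of the argument matches the paper.

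However, your treatment of the full range $m\le n/3$ has a genuine gap. You claim that in the far-from-extremal case, $H$ not being $\varepsilon$-close to $H_3^2(U,W)$ together with $\delta_1(H)>\binom{n-1}{2}-\binom{n-m}{2}$ ``forces enough edge density to give $\delta_1(H)\ge (1/2+2\gamma)\binom{n}{2}$.'' This implication is false. Recall that being far from $H_3^2(U,W)$ means $|E(H_3^2(U,W))\setminus E(H)|>\varepsilon n^3$ for every choice of partition, which is a statement about which edges of the model are \emph{missing} from $H$; it imposes no lower bound on $\delta_1(H)$ beyond the stated hypothesis. For $m$ in the intermediate range, say $m\approx n/6$, the hypothesis yields only $\delta_1(H)\gtrsim \tfrac{11}{72}n^2$, which is well below the threshold $\approx\tfrac14 n^2$ required by Lemma~\ref{Ab-HPS}. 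Worse still, even if the degree hypothesis were available, Lemma~\ref{almostperfect} (and the randomization step (v) of Lemma~\ref{lem1-5} it relies on) is formulated only for $n/k-\mu n\le m\le n/k$, so the far-from-extremal machinery simply does not apply when $m$ is bounded away from $n/3$. Your split ``$m\le n/162$ via Lemma~\ref{small-matching}, otherwise close/far'' therefore leaves the range $n/162<m<(1-\mu)n/3$ uncovered in the far case. The fix is a reduction to $m$ near $n/3$: for example, pad $H$ with $t=\lfloor(n-3m)/2\rfloor$ new vertices, adding every triple that meets the new set. One checks that this preserves the hypothesis in the form $\delta_1(H')>\binom{n'-1}{2}-\binom{n'-m'}{2}$ with $n'=n+t$, $m'=m+t$, that $m'$ is within $O(1)$ of $n'/3$, and that a matching of size $m'$ in $H'$ contains at least $m'-t=m$ edges disjoint from the new vertices, giving $\nu(H)\ge m$. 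With that reduction in place, your argument goes through; as written, it does not cover the intermediate range.
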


For the general case,  H\`{a}n, Person, and Schacht \cite{HPS09} and, independently, K\"{u}hn, Osthus, and Townsend \cite{KOT14} conjectured
that the asymptotic $l$-degree threshold for a perfect matching in a $k$-graph with $n$ vertices is
 $$\left(\max \left\{\frac 1 2, 1-\left(1-\frac 1 k\right)^{k-l}\right\}+o(1)\right){n-l\choose k-l}.$$
The first term $(1/2+o(1)){n-l\choose k-l}$ comes from a parity construction: Take disjoint nonempty sets
$A$ and $B$ with $||A|-|B||\le 2$, form a hypergraph $H$ by taking all $k$-subsets $f$ of
$A\cup B$ with $|f\cap A|\not\equiv |A| \pmod 2$.
The second term is given by the hypergraph obtained from $K_n^k$ (the complete $k$-graph on $n$ vertices) by deleting
all edges from a subgraph $K_{n-n/k+1}^k$.

\end{document}